\def\w{{\omega }}
\def\T{{\mathbb T}}
\def\d{{\delta}}
\def\a{{\alpha}}
\theoremstyle{plain}
\newtheorem{theorem}{Theorem}[section]
\newtheorem{corollary}[theorem]{Corollary}
\newtheorem{lemma}[theorem]{Lemma}
\newtheorem{proposition}{Proposition}[section]
\theoremstyle{remark}
\newtheorem{remark}{Remark}[section]
\newtheorem*{example}{Example}
\numberwithin{equation}{section}
\newcommand\R{\mathbb{R}}
\newcommand\N{\mathbb{N}}
\newcommand\Z{\mathbb{Z}}
\newcommand\<{\langle}
\renewcommand\>{\rangle}
\newcommand\Cdot{{\mskip2mu{\cdot}\mskip2mu}}
\newcommand\Ast[1]{\ast_{\scriptscriptstyle\mskip-2mu #1}}
\begin{document}

\title[Fractional modulus and $K$-functional in $L^p$ with Dunkl weight]
{Fractional smoothness in $L^p$ with Dunkl weight and its applications}

\author{D.~V.~Gorbachev}
\address{D.~Gorbachev, Tula State University,
Department of Applied Mathematics and Computer Science,
300012 Tula, Russia}
\email{dvgmail@mail.ru}

\author{V.~I.~Ivanov}
\address{V.~Ivanov, Tula State University,
Department of Applied Mathematics and Computer Science,
300012 Tula, Russia}
\email{ivaleryi@mail.ru}

\date{\today}
\keywords{Dunkl transform, generalized translation operator, convolution,
 Dunkl Laplacian, modulus of smoothness, $K$-functional} \subjclass{42B10, 33C45, 33C52}

\thanks{This work is supported by the Russian Science Foundation under grant 18-11-00199.}

\begin{abstract}
We define fractional power of the Dunkl Laplacian, fractional modulus of smoothness and fractional $K$-functional
in $L^p$-space with the Dunkl weight. As application, we prove direct and inverse theorems of approximation theory,
and some inequalities for entire functions of spherical exponential type in fractional settings.
\end{abstract}

\maketitle
\bigskip
\tableofcontents

\section{Introduction}
During { the last three decades}, many important elements of harmonic analysis with Dunkl
weight on $\R^d$ and ${\mathbb{S}}^{d-1}$ were proved; see, e.g., the papers by
C.F.~Dunkl \cite{Dun89,Dun91,Dun92}, M.~R\"{o}sler
\cite{Ros98,Ros99,Ros03,Ros02}, M.F.E.~de~Jeu \cite{Jeu93,Jeu06},
K.~Trim\`{e}che \cite{Tri01,Tri02}, Y.~Xu \cite{Xu97,Xu00}, and the recent
works \cite{anker, Dai13, Dai15, iv1, iv2}.

The classical translation operator $f\mapsto f(\Cdot+y)$ plays an important
role both in approximation theory and harmonic analysis, in particular, to
introduce several smoothness characteristics of $f$. In Dunkl harmonic analysis
its analogue is the generalized translation operator $\tau^y$ defined by
M.~R\"{o}sler \cite{Ros98}. Unfortunately, the $L^p$-boundedness of $\tau^y$ is
not established in general.

To overcome this difficulty, the spherical mean value of the
translation operator $\tau^y$ was introduced in \cite{MejTri01} and it was
studied in \cite{Ros03}, where, in particular, its positivity was shown.
In \cite{iv3} we proved that this operator
is a positive $L^p$-bounded operator $T^t$, which may be considered as a generalized
translation operator. It is worth mentioning that this operator can be applied
to problems where it is essential to deal with radial multipliers.

One application of this operator is basic inequalities of approximation theory in the
weighted $L^p$ spaces. With the help of the operator $T^t$ and radial multipliers
from $C^{\infty}(\mathbb{R}^d)$ we defined in \cite{iv3} integer power of Dunkl Laplacian,
moduli of smoothness, $K$-functional and proved the direct and inverse approximation theorems,
equivalence between moduli of smoothness and $K$-functional,
weighted analogues of Nikol'skii, Bernstein, and Boas inequalities for
entire functions of spherical exponential type.

In this paper we solve the same problems in fractional case.
We define fractional power of Dunkl Laplacian, fractional modulus of smoothness,
fractional $K$-functional and prove the direct
and inverse approximation theorems, equivalence between modulus of smoothness
and $K$-functional, some weighted inequalities for entire functions of spherical
exponential type in fractional settings. The main difficulty is that the multipliers
that determine smoothness characteristics have a singularity at zero.
To overcome this difficulty, instead of the Schwartz space and tempered distributions
we use the weighted analogue of the Lizorkin space (see, \cite{Liz63,iv4,SamKilMar93}),
the space of Dunkl transforms of its functions, and distributions on these spaces.

Let us now discuss some known results for fractional moduli of smoothness and $K$-functionals
in the non-weighted case.
The modulus of smoothness of order $\alpha$ of a function $f \in L_p(X)$, $X=\R^d,\T^d$, is given by
\begin{equation*}
\omega_{\a} (\delta,f)_{L_p(X)}
 :=\sup_{|h|\le \delta }
 \bigg\|
\sum\limits_{\nu=0}^\infty(-1)^{\nu}
\binom{\a}{\nu} f\,\big(\cdot+(\a-\nu) h\big) \bigg\|_{L_p(X)}, 
\end{equation*}
$\binom{\a}{\nu}=\frac{\a (\a-1)\dots (\a-\nu+1)}{\nu!}$,
$\binom{\a}{0}=1$; for the main properties see~\cite{butzer, jat}.
For definiteness, consider $f \in L_p(\T^d)$.
It is known that  (for see, e.g.,~\cite{KT}), for any $1<p<\infty$, and $\a>0$,
\begin{equation}\label{RTD}
K(f,t^{\a};L_p(\T^d),W_p^\a(\T^d))\asymp R(f,t^{\a};L_p(\T^d),\mathcal{T}_{[1/t]})\asymp \omega_\a(t,f)_{L_p(\T^d)},
\end{equation}
where
the $K$-functional of $f$ is given by
$$
  K(f,t^{\a},L_p(\T^d);W_p^{\a}(\T^d)):= \inf \left\{\|f-g\|_{L_p(\T^d)} +
   t^{\a}\|g \|_{\dot W_p^{\a}(\T^d)}\colon g\in W_p^{\a}(\T^d) \right\}
$$
and the realization of the $K$-functional is defined by
$$
R(f,t^{\a};L_p(\T^d),\mathcal{T}_{[1/t]}):= \inf \left\{\|f-T\|_{L_p(\T^d)} +
t^{\a}\|T\|_{\dot W_p^{\a}(\T^d)}\colon T\,\in \mathcal{T}_{[1/t]} \right\}.
$$
Here, $W_p^{\a}(X)$ is the fractional Sobolev space, i.e.,
$$
W_p^{\a}(X)=\{g\in L_p(X)\colon \|g \|_{\dot W_p^{\a}(X)}=\|(-\Delta)^{\a/2} g\|_{L_p(X)}<\infty\}
$$
and $\mathcal{T}_n$ is
the space of all trigonometric polynomials of order at most $n$, i.e.,
\[
\mathcal{T}_n=\mathrm{span}\,\bigl\{e^{i\<k,x\>}\colon\,k\in\mathbb{Z}^d,\,\,|k|_{\infty}=\max_j|k_j|\leq n\bigr\}.
\]
The key result to obtain (\ref{RTD}) is the following
Nikol'skii--Stechkin--Boas-type inequality \cite{KT} on the relationship
between  norms of derivatives and differences of trigonometric polynomials:
$$
\sup_{\xi\in \R^d,\,|\xi|=1}\Bigl\|\Bigl(\frac{\partial}{\partial\xi}\Bigr)^{\a} T_n\Bigr\|_{L_p(\T^d)}
\asymp \d^{-\a}\w_\a (\d,T_n)_{L_p(\T^d)},\quad T_n\in \mathcal{T}_n,
$$
where $0<\d\le \pi/n$, $0<p\le\infty$,  $\a>0$, and
$(\frac{\partial}{\partial\xi})^{\a} T$ is
 the directional derivative of order $\a$, that is, 
$$
\Bigl(\frac{\partial}{\partial\xi}\Bigr)^{\a} T(x)=\sum_{|k|_\infty\le [1/\d]}
{(i\<k,\xi\>)^\a} c_k e^{i\<k,x\>}.
$$

The direct and inverse inequalities are written as follows:
$$E_n(f)_{L_p(\T^d)}\lesssim \omega_\a(n^{-1},f)_{L_p(\T^d)}\lesssim
\sum_{k=1}^nk^{\a-1}E_{k-1}(f)_{L_p(\T^d)},\qquad \a>0,
$$
where
$E_n(f)_{p}$ is the best approximation of $f\in L_p(\T^d)$ by trigonometric polynomials $T\in\mathcal{T}_n$.

Similar results for functions on $\R^d$ can be found in \cite{gogat, jat, wil}.

The paper is organized as follows. In the next section, we give some basic
notation and facts of Dunkl harmonic analysis.
In Section~2 we introduce necessary spaces of distributions and define
fractional power of Dunkl Laplacian, fractional modulus of smoothness
and fractional $K$-functional, associated to the Dunkl weight.
In Section 4, we prove equivalence between them as well as the Jackson
inequality. Section~5 consists of some weighted inequalities for entire
functions of exponential type in fractional settings.
In Section~6, we obtain that modulus of smoothness are equivalent to the
realization of the $K$-functional. We conclude with Section~7, where we prove
the inverse theorems in $L^p$-spaces with the Dunkl weight.

\bigskip
\section{Notation and elements of Dunkl harmonic analysis}
In this section, we recall  the basic notation and results of the Dunkl harmonic analysis, see, e.g., 
\cite{Ros02}.

Throughout the paper, $\<x,y\>$ denotes the standard Euclidean scalar product in
 $d$-dimensional Euclidean space $\mathbb{R}^d$, $d\in \N$,
equipped with a norm $|x|=\sqrt{\<x,x\>}$.
For $r>0$ we write $B_r=\{x\in\R^d\colon |x|\leq r\}$.
Let $\Pi$ be the set of all polynomials of $d$ variables.
For $\alpha=(\alpha_1,\dots,\alpha_d)\in \mathbb{Z}^d_+$, a monomial $x^{\alpha}=\prod_{j=1}^dx^{\alpha_j}$ has degree
$|\alpha|_1=\sum_{j=1}^d\alpha_j$. The degree of polynomial is the greatest degree of its monomials.
$\Pi_m$ denotes the set of all polynomials of degree at most $m\in\mathbb{Z}_+$.

We will assume that $A\lesssim B$ means that $A\leq CB$ with a
constant $C>0$ depending only on nonessential parameters. Asymptotical equality $A\asymp B$ means
that $A\lesssim B$ and $B\lesssim A$. For $\alpha, \beta\in\mathbb{Z}^d_+$
the inequality $\alpha\leq\beta$ means that $\alpha_j\leq\beta_j$, $j=1,\dots,d$.


Define the following function spaces:
\begin{itemize}
  \item $C_b(\mathbb{R}^d)$ the space of bounded continuous functions with the norm
$\|f\|_{\infty}=\sup_{\mathbb{R}^d}|f|$,
  \item $C_0(\mathbb{R}^d)$ the space of continuous functions
which vanish at infinity,
  \item $C^{\infty}(\mathbb{R}^d)$ the space of infinitely
differentiable functions,
  \item $C^{\infty}_{\Pi}(\mathbb{R}^d)$ the space of infinitely
differentiable functions whose derivatives have polynomial growth at infinity,
  \item $C^{\infty}_{\Pi}(\mathbb{R}^d\setminus\{0\})$=$\{|x|^pf(x)\colon\,f\in C^{\infty}_{\Pi}(\mathbb{R}^d),p\in\mathbb{R}\}$,
  \item $\mathcal{S}(\mathbb{R}^d)$ the Schwartz space,
  \item $\mathcal{S}'(\mathbb{R}^d)$
  the space of tempered distributions,
  \item $X(\mathbb{R}_+)$ the space of even functions from $X(\mathbb{R})$, where $X$ is one of the spaces above,
  \item
 $X_\mathrm{rad}(\mathbb{R}^d)$ the subspace of $X(\mathbb{R}^d)$ consisting of radial functions
 $f(x)=f_{0}(|x|)$.

\end{itemize}

Let a finite subset $R\subset \mathbb{R}^{d}\setminus\{0\}$ be a root system,
$R_{+}$ positive subsystem of $R$, $G(R)\subset O(d)$ finite reflection group,
generated by reflections $\{\sigma_{a}\colon a\in R\}$, where $\sigma_{a}$ is a
reflection with respect to hyperplane $\<a,x\>=0$, $k\colon R\to
\mathbb{R}_{+}$ $G$-invariant multiplicity function. Recall that a finite
subset $R\subset \mathbb{R}^{d}\setminus\{0\}$ is called a root system, if
\[
R\cap\R a=\{a, -a\}\quad \text{and}\quad \sigma_{a}R=R\ \text{for all}\ a\in R.
\]

Let
\[
v_{k}(x)=\prod_{a\in R_{+}}|\<a,x\>|^{2k(a)}
\]
be the Dunkl weight,
\[
c_{k}^{-1}=\int_{\mathbb{R}^{d}}e^{-|x|^{2}/2}v_{k}(x)\,dx,\quad d\mu_{k}(x)=c_{k}v_{k}(x)\,dx,
\]
and $L^{p}(\mathbb{R}^{d},d\mu_{k})$, $0<p<\infty$, be the space of
complex-valued Lebesgue measurable functions $f$ for which
\[
\|f\|_{p,d\mu_{k}}=\Bigl(\int_{\mathbb{R}^{d}}|f|^{p}\,d\mu_{k}\Bigr)^{1/p}<\infty.
\]
We also assume that $L^{\infty}\equiv C_b$ and $\|f\|_{\infty,d\mu_{k}}=\|f\|_{\infty}$.

\begin{example} If the root system $R$ is $\{\pm e_1,\dots,\pm e_d\}$, where
$\{e_1,\dots,e_d\}$ is an orthonormal basis of $\mathbb{R}^{d}$, then
$v_{k}(x)=\prod_{j=1}^d|x_j|^{2k_j}$, $k_{j}\ge 0$, $G=\mathbb{Z}_2^d$.
\end{example}

Let
\[
D_{j,k}f(x)=D_{j}f(x)=\frac{\partial f(x)}{\partial x_{j}}+
\sum_{a\in R_{+}}k(a)\<a,e_{j}\>\,\frac{f(x)-f(\sigma_{a}x)}{\<a,x\>},\quad
j=1,\dots,d,
\]
be differential-differences Dunkl operators and $\Delta_k=\sum_{j=1}^dD_j^2$ be the
Dunkl Laplacian. The Dunkl kernel $e_{k}(x, y)=E_{k}(x, iy)$ is a unique
solution of the system
\[
D_{j}f(x)=iy_{j}f(x),\quad j=1,\dots,d,\qquad f(0)=1,
\]
and it plays the role of a generalized exponential function. Its properties are
similar to those of the classical exponential function $e^{i\<x, y\>}$. Several basic properties
follow from an integral representation 
\cite{Ros99}:
\[
e_k(x,y)=\int_{\mathbb{R}^d}e^{i\<\xi,y\>}\,d\mu_x^k(\xi),
\]
where $\mu_x^k$ is a probability Borel measure
and\, $\mathrm{supp}\,\mu_x^k\subset \mathrm{co}{}(\{gx\colon g\in G(R)\})$.
In particular,
\begin{equation*}
|e^{i(x, y)}|\leq 1,\quad e_{k}(x, y)=e_{k}(y, x), \quad e_{k}(-x, y)=\overline{e_{k}(x, y)}.
\end{equation*}





For $f\in L^{1}(\mathbb{R}^{d},d\mu_{k})$, the Dunkl transform is defined by the equality
\[
\mathcal{F}_{k}(f)(y)=\int_{\mathbb{R}^{d}}f(x)\overline{e_{k}(x,
y)}\,d\mu_{k}(x).
\]
For $k\equiv0$, \ $\mathcal{F}_{0}$ is the classical Fourier transform $\mathcal{F}$. We
also note  that $\mathcal{F}_k(e^{-|\,\cdot\,|^2/2})(y)=e^{-|y|^2/2}$ and
$\mathcal{F}_{k}^{-1}(f)(x)=\mathcal{F}_{k}(f)(-x)$.
Let
\begin{equation*}
\mathcal{A}_k=\Big\{f\in L^{1}(\mathbb{R}^{d},d\mu_{k})\cap C_b(\R^d)\colon
\mathcal{F}_{k}(f)\in L^{1}(\mathbb{R}^{d},d\mu_{k})\Big\}.
\end{equation*}
Let us now list several basic of the properties of the Dunkl transform.

\begin{proposition}\label{prop2.1}
\textup{(1)} For $f\in L^{1}(\mathbb{R}^{d},d\mu_{k})$, $\mathcal{F}_{k}(f)\in C_0(\R^d)$.

\smallbreak
\textup{(2)} If $f\in\mathcal{A}_k$, we have the pointwise inversion formula
\[
f(x)=\int_{\mathbb{R}^{d}}\mathcal{F}_{k}(f)(y)e_{k}(x, y)\,d\mu_{k}(y).
\]

\textup{(3)} The Dunkl transform leaves the Schwartz space $\mathcal{S}(\R^d)$ invariant.

\smallbreak
\textup{(4)} The Dunkl transform extends to a unitary operator in $L^{2}(\mathbb{R}^{d},d\mu_{k})$.
\end{proposition}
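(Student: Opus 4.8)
The plan is to transplant the classical Fourier-analytic arguments, using as principal tools the probabilistic integral representation $e_k(x,y)=\int_{\R^d}e^{i\<\xi,y\>}\,d\mu_x^k(\xi)$ together with the intertwining relations between $\mathcal{F}_k$, the Dunkl operators $D_j$, and multiplication by the coordinates. It is cleanest to treat part (3) first, since it supplies the tools needed for parts (1), (2), (4).

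For part (3), the key identities are $\mathcal{F}_k(D_j f)(y)=i y_j\,\mathcal{F}_k(f)(y)$ and $\mathcal{F}_k(x_j f)(y)=i D_j\,\mathcal{F}_k(f)(y)$, valid on $\mathcal{S}(\R^d)$. These follow from a Dunkl integration-by-parts, in which one uses $D_j^x\,\overline{e_k(x,y)}=-iy_j\,\overline{e_k(x,y)}$ (a consequence of the defining system and $\overline{e_k(x,y)}=e_k(-x,y)$) and the skew-adjointness $\int (D_j f)\,g\,d\mu_k=-\int f\,(D_j g)\,d\mu_k$. One must also check that $\Delta_k$ and coordinate multiplication preserve $\mathcal{S}(\R^d)$; here the apparent singularities $1/\<a,x\>$ in $D_j$ are removable on smooth functions because of the reflection term, so a Schwartz seminorm of $\mathcal{F}_k(f)$ is controlled, via the two intertwining identities, by a Schwartz seminorm of $f$. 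This gives invariance of $\mathcal{S}$.

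For part (1), boundedness $\|\mathcal{F}_k(f)\|_{\infty}\le\|f\|_{1,d\mu_k}$ is immediate from $|e_k(x,y)|\le 1$, and continuity of $\mathcal{F}_k(f)$ follows from dominated convergence and the continuity of $y\mapsto e_k(x,y)$. For the decay at infinity I would argue by density: since $\mathcal{S}(\R^d)$ is dense in $L^1(\R^d,d\mu_k)$ and $\mathcal{F}_k$ maps $\mathcal{S}$ into $\mathcal{S}\subset C_0(\R^d)$ by part (3), the uniform bound extends the $C_0$ property from the dense class to all of $L^1(d\mu_k)$. The inversion formula (2) I would obtain by Gaussian summability: insert the factor $e_k(x,y)\,e^{-\varepsilon^2|y|^2/2}$ into the integral, use $\mathcal{F}_k(e^{-|\,\cdot\,|^2/2})=e^{-|\,\cdot\,|^2/2}$ with scaling to recognize the resulting kernel as a Dunkl-Gaussian approximate identity, and let $\varepsilon\to 0$; the hypothesis $f\in\mathcal{A}_k$ (so $f$ and $\mathcal{F}_k(f)$ are both integrable) justifies the dominated-convergence passage. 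Finally, unitarity (4) follows by first establishing the Parseval identity $\int f\,\overline{g}\,d\mu_k=\int \mathcal{F}_k(f)\,\overline{\mathcal{F}_k(g)}\,d\mu_k$ on $\mathcal{S}(\R^d)$ as a direct consequence of (2) and (3), and then extending it by density of $\mathcal{S}$ in $L^2(\R^d,d\mu_k)$.

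I expect the main obstacle to be the careful justification of the Dunkl integration-by-parts underlying the intertwining identities in (3): the operators $D_j$ combine a differential part with a reflection part, and one must verify that the singular denominators $\<a,x\>$ contribute no boundary terms and that the difference terms reorganize correctly under the integral, using the $G$-invariance of $v_k$ and the symmetry $e_k(x,y)=e_k(y,x)$. Once these identities and the Gaussian computation are in place, parts (1), (2), and (4) are routine.
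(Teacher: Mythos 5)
This proposition is not proved in the paper at all: it is recalled as standard background, with the reader sent to R\"osler's lecture notes \cite{Ros02} and, for the Schwartz-space and Paley--Wiener aspects, to de Jeu \cite{Jeu93,Jeu06}. So there is no in-paper argument to compare against; your sketch must be measured against the literature it replaces, and there it follows the classical route exactly: intertwining identities for (3), uniform bound plus density for the Riemann--Lebesgue statement (1), Gaussian summability for the inversion (2), and Parseval-plus-density for the Plancherel statement (4). As an outline it is correct.

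Two places where your sketch conceals the real work and should not be presented as routine. First, in (3): your identities $\mathcal{F}_k(D_jf)(y)=iy_j\mathcal{F}_k(f)(y)$ and $\mathcal{F}_k(x_jf)(y)=iD_j\mathcal{F}_k(f)(y)$ are correct, but iterating them controls $\sup_y|y^\beta D^\alpha\mathcal{F}_k(f)(y)|$, i.e.\ \emph{Dunkl} derivatives of the transform, whereas the Schwartz seminorms involve ordinary derivatives $\partial^\alpha$. To close this gap you need either the kernel bounds $|\partial_y^\alpha e_k(x,y)|\leq|x|^{|\alpha|_1}$ (these do follow from the integral representation you cite, since $\mathrm{supp}\,\mu_x^k$ lies in the convex hull of the orbit of $x$) together with differentiation under the integral sign, or a separate argument converting $D^\alpha$-seminorms into $\partial^\alpha$-seminorms; this conversion is precisely where de Jeu's proof spends its effort, not the integration by parts you single out as the main obstacle. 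Second, in (2): the phrase ``Dunkl-Gaussian approximate identity'' is itself a theorem. One needs positivity of the kernel $\int e_k(x,y)\overline{e_k(z,y)}\,e^{-\varepsilon^2|y|^2/2}\,d\mu_k(y)$ (positivity of the Dunkl kernel on $\R^d\times\R^d$, due to R\"osler) and, more delicately, the fact that as $\varepsilon\to0$ its mass concentrates at the point $x$ itself and not merely on the orbit $Gx$; in the classical case this is trivial, here it is not, and it is the heart of the inversion proof in \cite{Ros02}. Finally, in (4), unitarity requires surjectivity and not only the isometry you extend by density; this is immediate from the inversion formula applied on $\mathcal{S}(\R^d)$, but it should be said.
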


Let $\lambda\ge -1/2$ and $J_{\lambda}(t)$ be the classical Bessel function of degree $\lambda$ and
\[
j_{\lambda}(t)=2^{\lambda}\Gamma(\lambda+1)t^{-\lambda}J_{\lambda}(t)
\]
be the normalized Bessel function. Set
\[
b_{\lambda}^{-1}=\int_{0}^{\infty}e^{-t^{2}/2}t^{2\lambda+1}\,dt=2^{\lambda}\Gamma(\lambda+1),\qquad
d\nu_{\lambda}(t)=b_{\lambda}t^{2\lambda+1}\,dt,\quad t\in \R_{+}.
\]
The norm in $L^{p}(\R_{+},d\nu_{\lambda})$, $1\le p<\infty$,
is given by
\[
\|f\|_{p,d\nu_{\lambda}}=\biggl(\int_{\R_{+}}|f(t)|^{p}\,d\nu_{\lambda}(t)\biggr)^{1/p}.
\]

The Hankel transform is defined as follows
\[
\mathcal{H}_{\lambda}(f)(r)=\int_{\R_{+}}f(t)j_{\lambda}(rt)\,d\nu_{\lambda}(t),\quad r\in \mathbb{R}_{+}.
\]
It is a unitary operator in $L^{2}(\R_{+},d\nu_{\lambda})$ and
$\mathcal{H}_{\lambda}^{-1}=\mathcal{H}_{\lambda}$ \cite[Chap.~7]{BatErd53}.

Note that if $\lambda=d/2-1$, the Hankel transform is a restriction of the
Fourier transform on radial functions and if
$\lambda=\lambda_k=d/2-1+\sum_{a\in R_+}k(a)$ of the Dunkl transform.
If $f(x)=f_0(|x|)$, then
\begin{equation}\label{restriction}
\mathcal{F}_{k}(f)(y)=\mathcal{H}_{\lambda_k}f_0(|y|).
\end{equation}

Let $\mathbb{S}^{d-1}=\{x'\in\R^d\colon |x'|=1\}$ be the Euclidean sphere and
$d\sigma_k(x')=a_kv_k(x')\,dx'$ be the probability measure on $\mathbb{S}^{d-1}$.
We have
\begin{equation*}
\int_{\mathbb{R}^{d}}f(x)\,d\mu_{k}(x)=
\int_{0}^{\infty}\int_{\mathbb{S}^{d-1}}f(tx')\,d\sigma_k(x')\,d\nu_{\lambda_k}(t).
\end{equation*}

We need the following partial case of the Funk--Hecke formula \cite{Xu00}
\begin{equation}\label{averaging}
\int_{\mathbb{S}^{d-1}}e_{k}(x, ty')\,d\sigma_k(y')=j_{\lambda_k}(t|x|).
\end{equation}

Let $y\in \mathbb{R}^d$ be given. M. R\"{o}sler \cite{Ros98} defined a
generalized translation operator $\tau^y$ in $L^{2}(\mathbb{R}^{d},d\mu_{k})$
by the equation
\[
\mathcal{F}_k(\tau^yf)(z)=e_k(y, z)\mathcal{F}_k(f)(z).
\]
Since $|e_k(y, z)|\leq 1$ then $\|\tau^y\|_{2\to 2}\leq1$.

The operator $\tau^yf$ is not positive in common case (see \cite{Ros95,ThaXu05})
and it remains an open question whether $\tau^yf$ is an $L^p$ bounded operator on $\mathcal{S}(\R^d)$ for $p\neq 2$. It is known only for $G=\Z_2^d$ (\cite{Ros95,ThaXu05}).

Some more we can say about the operator $\tau^y$, considering it on a subspace of radial functions.
\begin{proposition}[\cite{Ros03,ThaXu05,iv3}]\label{prop2.2}
\textup{(1)} If $f\in\mathcal{A}_{k,\mathrm{rad}}$, then pointwise
\begin{equation*}
\tau^yf(x)=\int_{\mathbb{R}^d}e_k(y,z)e_k(x,z)\mathcal{F}_k(f)(z)\,d\mu_k(z).
\end{equation*}

\smallbreak
\textup{(2)} The operator $\tau^yf$ is positive on radial functions. If $f\in C_{b,\mathrm{rad}}(\mathbb{R}^d)$, then
\begin{equation*}
\tau^yf(x)=\int_{\mathbb{R}^d}f(z)\,d\rho_{x,y}^k\in C_b(\mathbb{R}^d\times\mathbb{R}^d),
\end{equation*}
where $\rho_{x,y}^k$ is a radial probability Borel measure and $\mathrm{supp}\,\rho_{x,y}^k\subset B_{|x|+|y|}$. In partial, $\tau^y1=1$.

\smallbreak
\textup{(3)} If $f\in \mathcal{S}_{\mathrm{rad}}(\mathbb{R}^d)$, $1\leq p\leq\infty$, then
$\|\tau^yf\|_{p,d\mu_k}\leq \|f\|_{p,d\mu_k}$
and the operator $\tau^t$ can be extended to $L^{p}_{\mathrm{rad}}(\mathbb{R}^{d},d\mu_{k})$ with
preservation of the norm.
\end{proposition}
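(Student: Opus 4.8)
The plan is to prove (1) first from the Fourier side, then derive (2) from it with one deep external input, and finally deduce (3) by interpolation. For (1), I would argue purely through the transform. By definition $\mathcal{F}_k(\tau^yf)(z)=e_k(y,z)\mathcal{F}_k(f)(z)$; since $f\in\mathcal{A}_{k,\mathrm{rad}}$ gives $\mathcal{F}_k(f)\in L^1(\mathbb{R}^d,d\mu_k)$ and $|e_k(y,z)|\le1$, the product $e_k(y,\Cdot)\mathcal{F}_k(f)$ again lies in $L^1(\mathbb{R}^d,d\mu_k)$. Hence
\[
F(x):=\int_{\mathbb{R}^d}e_k(y,z)\,e_k(x,z)\,\mathcal{F}_k(f)(z)\,d\mu_k(z)
\]
is absolutely convergent, equals $\mathcal{F}_k^{-1}\bigl(e_k(y,\Cdot)\mathcal{F}_k(f)\bigr)$, and by Proposition~\ref{prop2.1}(1) is continuous and vanishes at infinity. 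Since $f\in\mathcal{A}_k\subset L^1\cap C_b\subset L^2$, we also have $\tau^yf\in L^2$ with $\mathcal{F}_k(\tau^yf)=e_k(y,\Cdot)\mathcal{F}_k(f)\in L^1\cap L^2$, so its $L^2$-inversion agrees a.e.\ with the pointwise integral $F$. Thus $\tau^yf=F$ as continuous functions, which is the asserted formula.

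The real work is (2), and the hard part will be the positivity. Writing $d\mu_k(z)=d\sigma_k(z')\,d\nu_{\lambda_k}(s)$ with $z=sz'$ in the formula from (1), the value $\tau^yf(x)$ becomes an $s$-integral of the spherical product $\int_{\mathbb{S}^{d-1}}e_k(x,sz')\,e_k(y,sz')\,d\sigma_k(z')$. The goal is to represent the functional $f\mapsto\tau^yf(x)$ by a positive measure. Using the integral representation $e_k(x,\Cdot)=\int e^{i\langle\xi,\Cdot\rangle}\,d\mu_x^k(\xi)$ with $\mu_x^k$ a probability measure supported in $\mathrm{co}(\{gx\colon g\in G(R)\})$, together with the Funk--Hecke formula~\eqref{averaging}, I would reduce the angular integral to the normalized Bessel function $j_{\lambda_k}$ and then invoke the classical Gegenbauer--Sonine positive product (convolution) formula for Bessel functions, valid precisely for $\lambda_k\ge-1/2$ (which holds here since $\lambda_k=d/2-1+\sum_{a\in R_+}k(a)$): the product $j_{\lambda_k}(a)j_{\lambda_k}(b)$ is the $j_{\lambda_k}$-transform of a probability measure on $[\,|a-b|,a+b\,]$. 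Granting this, $f\mapsto\tau^yf(x)$ is a positive linear functional on radial $f$, so by the Riesz representation theorem it is integration against a Borel measure $\rho_{x,y}^k\ge0$; testing on Gaussians $e^{-\varepsilon|\,\cdot\,|^2/2}$ and letting $\varepsilon\to0$ gives $\tau^y1=1$, i.e.\ $\rho_{x,y}^k$ is a probability measure. The support bound $\mathrm{supp}\,\rho_{x,y}^k\subset B_{|x|+|y|}$ follows from $\mathrm{supp}\,\mu_x^k\subset B_{|x|}$, $\mathrm{supp}\,\mu_y^k\subset B_{|y|}$ and the triangle inequality, and joint continuity in $(x,y)$ from continuity of $e_k$ and dominated convergence. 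This positive radial product formula is exactly the theorem of M.~R\"osler \cite{Ros03}; reproducing its proof is the genuine obstacle, everything else being essentially formal.

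Finally, (3) follows from (2) by interpolation. Since $\rho_{x,y}^k$ is a probability measure and $|f|$ is radial whenever $f$ is, we get $|\tau^yf(x)|\le\int|f|\,d\rho_{x,y}^k=\tau^y|f|(x)$, hence $\|\tau^yf\|_{\infty,d\mu_k}\le\|f\|_{\infty,d\mu_k}$. For the $L^1$ endpoint I would use the $\mu_k$-invariance $\int_{\mathbb{R}^d}\tau^yg\,d\mu_k=\mathcal{F}_k(\tau^yg)(0)=e_k(y,0)\mathcal{F}_k(g)(0)=\int_{\mathbb{R}^d}g\,d\mu_k$, valid for $g\in\mathcal{S}_{\mathrm{rad}}(\mathbb{R}^d)$; applied to $g=|f|\ge0$ this yields $\|\tau^yf\|_{1,d\mu_k}\le\int_{\mathbb{R}^d}\tau^y|f|\,d\mu_k=\|f\|_{1,d\mu_k}$. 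Riesz--Thorin interpolation between these two estimates (each with constant $1$) gives $\|\tau^yf\|_{p,d\mu_k}\le\|f\|_{p,d\mu_k}$ for every $1\le p\le\infty$, and a density argument then extends $\tau^y$ from $\mathcal{S}_{\mathrm{rad}}(\mathbb{R}^d)$ to all of $L^p_{\mathrm{rad}}(\mathbb{R}^d,d\mu_k)$ with preservation of the norm.
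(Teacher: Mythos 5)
First, a point of reference: the paper does not prove Proposition~\ref{prop2.2} at all --- it is imported verbatim from \cite{Ros03,ThaXu05,iv3}, so your proposal has to be measured against those sources. Your part (1) is correct (and, as you implicitly note, radiality is not even needed there): multiplication by $e_k(y,\cdot)$ preserves $L^1$-ness of the transform, and matching the $L^2$ inversion of $\tau^yf$ with the absolutely convergent integral identifies $\tau^yf$ with the continuous function $F$. Your overall architecture for (2)--(3) (positive representing measure $\Rightarrow$ $L^\infty$ and $L^1$ contraction on radial functions $\Rightarrow$ $L^p$) is also the standard one used in the cited literature.

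The genuine problem is the mechanism you sketch for the positivity in (2). The Funk--Hecke formula \eqref{averaging} integrates the \emph{Dunkl kernel} $e_k(x,tz')$ against the weighted measure $d\sigma_k(z')$; it says nothing about plane waves. Once you insert the representations $e_k(x,sz')=\int e^{is\langle\xi,z'\rangle}\,d\mu_x^k(\xi)$ and $e_k(y,sz')=\int e^{is\langle\eta,z'\rangle}\,d\mu_y^k(\eta)$, the angular integral you must evaluate is $\int_{\mathbb{S}^{d-1}}e^{is\langle\xi+\eta,z'\rangle}\,d\sigma_k(z')$, and for $k\not\equiv0$ this is \emph{not} $j_{\lambda_k}(s|\xi+\eta|)$. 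Already in rank one ($d=1$, weight $|x|^{2k}$, $\lambda_k=k-1/2$) one has $d\sigma_k=\tfrac12(\delta_{+1}+\delta_{-1})$, so the plane-wave average is $\cos(s|v|)=j_{-1/2}(s|v|)$, whereas \eqref{averaging} produces $j_{k-1/2}$; these differ for every $k>0$. So the passage from the classical Gegenbauer--Sonine product formula for Bessel functions to the Dunkl setting is not ``essentially formal'': it is precisely the content of R\"osler's theorem in \cite{Ros03}, whose proof goes through the intertwining operator and a delicate limiting argument, not through \eqref{averaging}. Since in the end you invoke \cite{Ros03} for the positive radial product formula --- the same citation the paper itself relies on --- your logical chain survives as a citation-based argument; but the route you describe for actually \emph{proving} that formula would fail.

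Two smaller repairs are needed in (3). The invariance $\int\tau^yg\,d\mu_k=\int g\,d\mu_k$ is justified by your transform computation only for $g\in\mathcal{S}_{\mathrm{rad}}(\mathbb{R}^d)$ (note $\tau^y$ preserves $\mathcal{S}(\mathbb{R}^d)$ because $e_k(y,\cdot)$ has bounded derivatives), but $g=|f|$ is not Schwartz; approximate $|f|$ by nonnegative radial test functions and use Fatou, which yields $\int\tau^y|f|\,d\mu_k\le\|f\|_{1,d\mu_k}$ --- the inequality is all you need. Also, Riesz--Thorin should be applied after identifying $L^p_{\mathrm{rad}}(\mathbb{R}^d,d\mu_k)$ with $L^p(\mathbb{R}_+,d\nu_{\lambda_k})$, since your operator is defined on a subspace and its range is not radial; alternatively you can avoid interpolation altogether, since Jensen's inequality for the probability measures $\rho_{x,y}^k$ gives $|\tau^yf(x)|^p\le\tau^y(|f|^p)(x)$, and the $L^1$ bound applied to $|f|^p$ finishes the proof.
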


Let
\begin{equation}\label{Weighteddimension}
\lambda_k=d/2-1+\sum_{a\in R_+}k(a),\quad d_k=2(\lambda_k+1).
\end{equation}
The number $d_k$ plays the role of the
generalized dimension of the space $(\mathbb{R}^d,d\mu_k)$.
We have $\lambda_k\geq-1/2$ and, moreover, $\lambda_k=-1/2$ only if $d=1$ and
$k\equiv0$. In what follows we assume that $\lambda_k>-1/2$ and $d_k>1$.

Let $t\in \mathbb{R}_+$. In \cite{iv3} we defined new generalized translation operator $T^t$  in $L^{2}(\mathbb{R}^{d},d\mu_{k})$ by relation
\begin{equation}\label{multiplier}
\mathcal{F}_k(T^tf)(y)=j_{\lambda_k}(t|y|)\mathcal{F}_k(f)(y).
\end{equation}
Since $|j_{\lambda_k}(t)|\leq 1$, then $\|T^t\|_{2\to 2}\leq1$.

Let us list several basic of the properties of $T^t$, $t\in\mathbb{R}_+$.

\begin{proposition}[\cite{iv3,Ros03}]\label{prop2.3}
\textup{(1)} If $f\in\mathcal{A}_k$, then pointwise
\[
T^tf(x)=\int_{\mathbb{R}^d}j_{\lambda_k}(t|y|)e_k(x,
y)\mathcal{F}_k(f)(y)\,d\mu_k(y)=\int_{\mathbb{S}^{d-1}}\tau^{ty'}f(x)\,d\sigma_k(y').
\]

\smallbreak
\textup{(2)} The operator $T^t$ is positive. If $f\in C_b(\mathbb{R}^d)$, then
\begin{equation*}
T^tf(x)=\int_{\mathbb{R}^d}f(z)\,d\sigma_{x,t}^k(z)\in C_b(\mathbb{R}_+\times\mathbb{R}^d),
\end{equation*}
where $\sigma_{x,t}^k$ is a probability Borel measure and\, $\mathrm{supp}\,\sigma_{x,t}^k\subset \bigcup\limits_{g\in G}\{z\in\mathbb{R}^d\colon |z-gx|\leq t\}$. In partial, $T^t1=1$.

\smallbreak
\textup{(3)} If $f\in \mathcal{S}(\mathbb{R}^d)$, $1\leq p\leq\infty$, then $\|T^tf\|_{p,d\mu_k}\leq \|f\|_{p,d\mu_k}$  and the operator $T^t$ can be extended to $L^{p}(\mathbb{R}^{d},d\mu_{k})$ with preservation of the norm.

\end{proposition}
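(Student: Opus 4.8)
The plan is to prove the three parts in order, using the multiplier definition \eqref{multiplier} together with the inversion formula (Proposition~\ref{prop2.1}), the Funk--Hecke formula \eqref{averaging}, and the transform representation of $\tau^y$.

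\emph{Part (1).} For $f\in\mathcal{A}_k$ the product $j_{\lambda_k}(t|\cdot|)\mathcal{F}_k(f)$ lies in $L^1(\mathbb{R}^d,d\mu_k)$, since $|j_{\lambda_k}|\le 1$ and $\mathcal{F}_k(f)\in L^1$. Hence the integral $g(x):=\int_{\mathbb{R}^d}j_{\lambda_k}(t|y|)e_k(x,y)\mathcal{F}_k(f)(y)\,d\mu_k(y)$ defines a bounded continuous function with $\mathcal{F}_k(g)=j_{\lambda_k}(t|\cdot|)\mathcal{F}_k(f)=\mathcal{F}_k(T^tf)$; by Proposition~\ref{prop2.1}(2) it is the continuous representative of $T^tf$, which gives the first equality. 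For the second equality I substitute \eqref{averaging} in the form $j_{\lambda_k}(t|y|)=\int_{\mathbb{S}^{d-1}}e_k(y,ty')\,d\sigma_k(y')$, interchange the two absolutely convergent integrals by Fubini, and recognize the inner integral as $\tau^{ty'}f(x)$ via the transform formula of Proposition~\ref{prop2.2}(1) (valid for $f\in\mathcal{A}_k$ upon applying inversion to $\mathcal{F}_k(\tau^{ty'}f)=e_k(ty',\cdot)\mathcal{F}_k(f)$).

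\emph{Part (2), the main obstacle.} Positivity is the crux, and here I would invoke Rösler's positive radial product formula \cite{Ros03}: although the measures $\rho_{x,y}^k$ representing $\tau^y$ are only signed for non-radial arguments, their spherical average over $|y|=t$ is positive. Concretely, averaging the representation $\tau^{ty'}f(x)=\int_{\mathbb{R}^d}f\,d\rho_{x,ty'}^k$ over $y'\in\mathbb{S}^{d-1}$ produces a positive Borel measure $\sigma_{x,t}^k$ with $T^tf(x)=\int_{\mathbb{R}^d}f\,d\sigma_{x,t}^k$, so $f\mapsto T^tf(x)$ is a positive functional; joint continuity in $(x,t)$ then follows from continuity of $\tau^{ty'}f(x)$ and dominated convergence. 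The normalization $T^t1=1$, i.e. that $\sigma_{x,t}^k$ is a probability measure, comes from $\tau^{ty'}1=1$ together with $\sigma_k(\mathbb{S}^{d-1})=1$ (equivalently from $j_{\lambda_k}(0)=1$). The support claim refines the bound $\supp\rho_{x,ty'}^k\subset B_{|x|+t}$ of Proposition~\ref{prop2.2}(2); it follows from the explicit geometry of Rösler's measure, which concentrates near the $G$-orbit $\{gx\colon g\in G\}$, yielding $\supp\sigma_{x,t}^k\subset\bigcup_{g\in G}\{z\in\mathbb{R}^d\colon|z-gx|\le t\}$. I expect this step to be the genuine difficulty, as it rests on the nontrivial product formula rather than on elementary transform manipulations.

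\emph{Part (3).} Given the probability-measure representation from Part (2), the contraction property is routine. First, evaluating the Dunkl transform at the origin and using $e_k(x,0)=1$, $j_{\lambda_k}(0)=1$ gives the integral-preservation identity
\[
\int_{\mathbb{R}^d}T^tg\,d\mu_k=\mathcal{F}_k(T^tg)(0)=j_{\lambda_k}(0)\mathcal{F}_k(g)(0)=\int_{\mathbb{R}^d}g\,d\mu_k,
\]
valid for $g\in\mathcal{A}_k$ and hence, interpreted as $\int_{\mathbb{R}^d}d\sigma_{x,t}^k\,d\mu_k(x)=d\mu_k$, for all $g\ge0$ by monotone convergence. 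For $1\le p<\infty$, Jensen's inequality applied to the probability measure $\sigma_{x,t}^k$ gives the pointwise bound $|T^tf(x)|^p\le\int_{\mathbb{R}^d}|f|^p\,d\sigma_{x,t}^k=T^t(|f|^p)(x)$; integrating in $x$ and applying integral preservation with $g=|f|^p$ yields $\|T^tf\|_{p,d\mu_k}^p\le\int_{\mathbb{R}^d}T^t(|f|^p)\,d\mu_k=\|f\|_{p,d\mu_k}^p$. The case $p=\infty$ is immediate from positivity and $T^t1=1$. Finally, density of $\mathcal{S}(\mathbb{R}^d)$ in $L^p(\mathbb{R}^d,d\mu_k)$ for $1\le p<\infty$ extends $T^t$ to all of $L^p$ with the same norm bound.
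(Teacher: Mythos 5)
The paper itself gives no proof of Proposition~\ref{prop2.3} (it is quoted from \cite{iv3,Ros03}), so your reconstruction must be judged against the arguments of those references. Parts (1) and (3) of your proposal are correct and are essentially the standard route. The genuine gap is in Part (2), and it matters because Part (3) depends on it: you obtain the representation $T^tf(x)=\int f\,d\sigma_{x,t}^k$ by averaging the identity $\tau^{ty'}f(x)=\int f\,d\rho_{x,ty'}^k$ over $y'\in\mathbb{S}^{d-1}$. But Proposition~\ref{prop2.2}(2) supplies that identity only for \emph{radial} $f\in C_{b,\mathrm{rad}}(\mathbb{R}^d)$; for non-radial $f$ the operator $\tau^{y}$ is not represented by the positive measure $\rho_{x,y}^k$ (indeed $\tau^y$ fails to be positive in general, and it is not even known to be $L^p$-bounded or pointwise defined on all of $C_b$, so there is no ``signed'' representation to average either). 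Consequently your argument proves $T^tf(x)=\int f\,d\sigma_{x,t}^k$, hence positivity, only for radial $f$ --- which is already contained in Proposition~\ref{prop2.2} --- while the whole content of Part (2) is positivity on \emph{arbitrary} functions. Note also that a Borel measure is not determined by its integrals against radial functions, so you cannot even identify your averaged measure with the one asserted in the statement.

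The missing idea is that R\"osler's result in \cite{Ros03} is a \emph{kernel-level} product formula: for every $x\in\mathbb{R}^d$, $t\ge 0$ there exists a probability measure $\sigma_{x,t}^k$ with $\mathrm{supp}\,\sigma_{x,t}^k\subset\bigcup_{g\in G}\{z\in\mathbb{R}^d\colon |z-gx|\le t\}$ such that
\[
j_{\lambda_k}(t|\xi|)\,e_k(x,\xi)=\int_{\mathbb{R}^d}e_k(z,\xi)\,d\sigma_{x,t}^k(z)\quad\text{for all }\xi\in\mathbb{R}^d.
\]
Inserting this identity into your Part (1) formula and using Fubini and inversion gives $T^tf(x)=\int f\,d\sigma_{x,t}^k$ for every $f\in\mathcal{A}_k$, with no radiality assumption; a density/weak-$*$ argument then extends the representation to all $f\in C_b(\mathbb{R}^d)$, and joint continuity in $(x,t)$ follows from weak continuity of $(x,t)\mapsto\sigma_{x,t}^k$ --- not from ``continuity of $\tau^{ty'}f(x)$,'' which is undefined for general $f\in C_b$. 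With Part (2) repaired in this way, your Part (3) (Jensen's inequality, mass preservation tested on $\mathcal{A}_k\supset\mathcal{S}$ and extended by monotone convergence, then density of $\mathcal{S}$ in $L^p$) goes through as written; this is essentially the argument of \cite{iv3}.
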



Note that for
$k\equiv0$, $T^t$ is the usual spherical mean
\begin{equation*}
T^tf(x)=S^tf(x)=\int_{\mathbb{S}^{d-1}}f(x+ty')\,d\sigma_0(y').
\end{equation*}

Let $g(y)=g_0(|y|)$ be a radial function. S.~Thangavelu and Yu.~Xu~\cite{ThaXu05} defined a convolution
\begin{equation}\label{convolution1}
(f\ast_{k}g)(x)=\int_{\mathbb{R}^d}f(y)\tau^{x}g(-y)\,d\mu_{k}(y).
\end{equation}
\begin{proposition}[\cite{ThaXu05,iv3}]\label{prop2.4}
\textup{(1)} If $f\in\mathcal{A}_k$, $g\in L^{1}_{\mathrm{rad}}(\mathbb{R}^d,d\mu_{k})$, then
\begin{equation*}
(f\Ast{k}g)(x)=\int_{\R^d}\tau^{-y}f(x)g(y)\,d\mu_k(y)\in \mathcal{A}_k,
\end{equation*}
and
\begin{equation*}
\mathcal{F}_k(f\ast_{k}g)(y)=\mathcal{F}_k(f)(y)\mathcal{F}_k(g)(y),\quad y\in\mathbb{R}^d.
\end{equation*}

\smallbreak
\textup{(2)} Let $1\leq p\leq\infty$. If $f\in L^{p}(\mathbb{R}^{d},d\mu_{k})$,
$g\in L^{1}_{\mathrm{rad}}(\mathbb{R}^d,d\mu_{k})$, then
$(f\ast_{k}g)\in L^{p}(\mathbb{R}^{d},d\mu_{k})$, and
\begin{equation}\label{ineq}
\|(f\ast_{k}g)\|_{p,d\mu_k}\leq \|f\|_{p,d\mu_k}\|g\|_{1,d\mu_k}.
\end{equation}
\end{proposition}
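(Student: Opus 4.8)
The plan is to reduce both parts to the integral representation of the generalized translation together with the multiplier relation for $\mathcal{F}_k$, while being careful to route every $L^p$-estimate through the \emph{radial} factor, since Proposition~\ref{prop2.2} only gives translation bounds for radial functions.

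For part~(1) I would first record that, for $f\in\mathcal{A}_k$, the inversion formula of Proposition~\ref{prop2.1}(2) applied to $\mathcal{F}_k(\tau^{-y}f)=e_k(-y,\Cdot)\mathcal{F}_k(f)\in L^1$ gives the pointwise representation $\tau^{-y}f(x)=\int_{\R^d}e_k(-y,z)e_k(x,z)\mathcal{F}_k(f)(z)\,d\mu_k(z)$. Since $g$ is radial, the same representation and Proposition~\ref{prop2.2}(1) yield the symmetry $\tau^x g(-y)=\tau^{-y}g(x)$, which identifies the defining integral (\ref{convolution1}) with $\int_{\R^d}\tau^{-y}f(x)g(y)\,d\mu_k(y)$. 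Substituting the representation of $\tau^{-y}f$ into this integral, the decisive point is that the resulting double integral is dominated by $\|g\|_{1,d\mu_k}\|\mathcal{F}_k(f)\|_{1,d\mu_k}<\infty$, using $f\in\mathcal{A}_k$ (hence $\mathcal{F}_k(f)\in L^1$) and $|e_k|\le1$; here it is essential to expand $f$ rather than $g$, because $\mathcal{F}_k(g)$ is only bounded. Fubini's theorem then applies, and the inner integral $\int_{\R^d}g(y)e_k(-y,z)\,d\mu_k(y)$ equals $\mathcal{F}_k(g)(z)$, producing $(f\ast_k g)(x)=\int_{\R^d}e_k(x,z)\mathcal{F}_k(f)(z)\mathcal{F}_k(g)(z)\,d\mu_k(z)$, i.e.\ the inversion formula for $\mathcal{F}_k(f)\mathcal{F}_k(g)$, whence $\mathcal{F}_k(f\ast_k g)=\mathcal{F}_k(f)\mathcal{F}_k(g)$. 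The inclusion $f\ast_k g\in\mathcal{A}_k$ then follows: the product $\mathcal{F}_k(f)\mathcal{F}_k(g)$ of an $L^1$ and a bounded function lies in $L^1$, so $f\ast_k g$ is its inverse transform and belongs to $C_0\subset C_b$ by Proposition~\ref{prop2.1}(1), while $f\ast_k g\in L^1$ by part~(2) with $p=1$.

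For part~(2) the main obstacle is that $\|\tau^{-y}f\|_{p,d\mu_k}\le\|f\|_{p,d\mu_k}$ is not available for non-radial $f$ when $p\neq2$, so a direct Minkowski estimate on $\int_{\R^d}\tau^{-y}f(x)g(y)\,d\mu_k(y)$ fails. I would instead prove the two endpoint bounds by letting $\tau$ act on the radial factor $g$ and invoking Proposition~\ref{prop2.2}(3). For $p=\infty$, the invariance of $d\mu_k$ under $y\mapsto-y$ gives $|(f\ast_k g)(x)|\le\|f\|_{\infty}\int_{\R^d}|\tau^x g(-y)|\,d\mu_k(y)=\|f\|_{\infty}\|\tau^x g\|_{1,d\mu_k}\le\|f\|_{\infty}\|g\|_{1,d\mu_k}$; for $p=1$, Tonelli applied to $\int_{\R^d}\int_{\R^d}|f(y)||\tau^{-y}g(x)|\,d\mu_k(x)\,d\mu_k(y)$ together with $\|\tau^{-y}g\|_{1,d\mu_k}\le\|g\|_{1,d\mu_k}$ yields $\|f\ast_k g\|_{1,d\mu_k}\le\|f\|_{1,d\mu_k}\|g\|_{1,d\mu_k}$. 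Thus, for fixed radial $g\in L^1$, the linear map $f\mapsto f\ast_k g$ is bounded on $L^1$ and on $L^\infty$ with norm at most $\|g\|_{1,d\mu_k}$, and the Riesz--Thorin interpolation theorem gives boundedness on every $L^p$, $1\le p\le\infty$, with the same norm bound, which is exactly (\ref{ineq}); a routine density argument extends $f\ast_k g$ from the class where the defining integral converges absolutely to all of $L^p(\R^d,d\mu_k)$. The conceptual heart of the proof is precisely this transfer of the translation onto the radial factor followed by interpolation, which circumvents the unknown $L^p$-boundedness of $\tau^y$.
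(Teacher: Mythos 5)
Your part~(2) is essentially correct, but part~(1) has a genuine gap at its first reduction. The radial symmetry $\tau^{x}g(-y)=\tau^{-y}g(x)$ only lets you rewrite the defining integral \eqref{convolution1} as $\int_{\R^d}f(y)\,\tau^{-y}g(x)\,d\mu_k(y)$: the translation still acts on $g$. It does \emph{not} identify this with $\int_{\R^d}\tau^{-y}f(x)\,g(y)\,d\mu_k(y)$, where the translation acts on $f$. Moving $\tau$ from the radial factor onto the non-radial one is precisely the nontrivial content of part~(1) (commutativity of the Dunkl convolution); in the Dunkl setting there is no change of variables $y\mapsto x-y$ that could produce it, so it needs a proof. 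Your Fubini computation evaluates only the target integral, showing $\int_{\R^d}\tau^{-y}f(x)g(y)\,d\mu_k(y)=\mathcal{F}_k^{-1}(\mathcal{F}_k(f)\mathcal{F}_k(g))(x)$; the defining integral $\int_{\R^d}f(y)\tau^{x}g(-y)\,d\mu_k(y)$ is never computed, and it cannot be computed the same way for exactly the reason you point out --- $\mathcal{F}_k(g)$ is only bounded. The missing step is an approximation argument: take radial $g_n\in\mathcal{S}_{\mathrm{rad}}(\R^d)$ with $\|g_n-g\|_{1,d\mu_k}\to0$; for such $g_n$ (which lie in $\mathcal{A}_{k,\mathrm{rad}}$ by Proposition~\ref{prop2.1}(3)) both integrals equal $\mathcal{F}_k^{-1}(\mathcal{F}_k(f)\mathcal{F}_k(g_n))(x)$ by Fubini, now legitimate since $\mathcal{F}_k(g_n)\in L^1$; then pass to the limit, using $f\in C_b(\R^d)$ together with $\|\tau^{x}(g_n-g)\|_{1,d\mu_k}\leq\|g_n-g\|_{1,d\mu_k}$ (Proposition~\ref{prop2.2}(3)) on the defining side, and $\|\mathcal{F}_k(g_n)-\mathcal{F}_k(g)\|_{\infty}\leq\|g_n-g\|_{1,d\mu_k}$ with $\mathcal{F}_k(f)\in L^1$ on the other side. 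The same density step is what gives meaning to the symmetry $\tau^{x}g(-y)=\tau^{-y}g(x)$ for $g$ merely in $L^{1}_{\mathrm{rad}}$, since Proposition~\ref{prop2.2}(1) is a statement about $\mathcal{A}_{k,\mathrm{rad}}$; also, your pointwise formula for $\tau^{-y}f$, $f\in\mathcal{A}_k$, should be justified through the $L^2$ definition of $\tau^{-y}$ (its transform $e_k(-y,\Cdot)\mathcal{F}_k(f)$ lies in $L^1\cap L^2$), rather than by quoting Proposition~\ref{prop2.1}(2), whose hypotheses for $\tau^{-y}f$ you have not verified.

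For part~(2) your route is sound but different from the paper's. The paper (in the Remark following the proposition) obtains \eqref{ineq} for all $p$ in one stroke: H\"older's inequality with respect to the measure $|\tau^{x}g(-y)|\,d\mu_k(y)$, integration in $x$, Fubini, the symmetry, and the contraction $\|\tau^{-y}g\|_{1,d\mu_k}\leq\|g\|_{1,d\mu_k}$. Your endpoint bounds at $p=1$ and $p=\infty$ are exactly the two extreme cases of that argument, and Riesz--Thorin (legitimate here, the spaces being complex) fills in the intermediate exponents. What the paper's direct argument buys, besides avoiding interpolation machinery, is that it bounds $\int_{\R^d}|f(y)||\tau^{x}g(-y)|\,d\mu_k(y)$ itself, so the convolution is automatically given by an a.e.\ absolutely convergent integral on all of $L^p$; in your scheme that identification of the interpolated operator with the defining integral is the extra ``routine density argument'' you defer, and it should be spelled out.
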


\begin{remark}
The inequality \eqref{ineq} was proved in \cite{ThaXu05} under
additional condition of boundedness $g$. This condition can be omitted. Indeed, by
H\"{o}lder's inequality
\begin{align*}
|(f\Ast{k}g)(x)|&=\Bigl|\int_{\mathbb{R}^d}f(y)\tau^{x}g(-y)\,d\mu_{k}(y)\Bigr|\\&\leq
\Bigl(\int_{\mathbb{R}^d}|f(y)|^p|\tau^{x}g(-y)|\,d\mu_{k}(y)\Bigr)^{1/p}
\Bigl(\int_{\mathbb{R}^d}|\tau^{x}g(-y)|\,d\mu_{k}(y)\Bigr)^{1-1/p},
\end{align*}
and by Proposition \ref{prop2.2}
\begin{align*}
\|(f\Ast{\lambda_k}g_0)\|_{p,d\nu_{\lambda_k}}&\leq
\Bigl(\int_{\mathbb{R}^d}\int_{\mathbb{R}^d}|f(y)|^p|\tau^{x}g(-y)|\,d\mu_{k}(y)\,d\mu_k(x)\Bigr)^{1/p}\|g\|_{1,d\mu_{k}}^{1-1/p}\\&
=\Bigl(\int_{\mathbb{R}^d}|f(y)|^p\int_{\mathbb{R}^d}|\tau^{-y}g(x)|\,d\mu_{k}(x)\,d\mu_k(y)\Bigr)^{1/p}\|g\|_{1,d\mu_{k}}^{1-1/p}
\\&\leq\|f\|_{p,d\mu_k}\|g\|_{1,d\mu_{k}}.
\end{align*}
\end{remark}

\bigskip
\section{Best approximation, smoothness characteristics \\ and the $K$-functional}
Let $\mathbb{C}^d$ be the complex Euclidean space of $d$ dimensions.
Let also $z=(z_1,\dots,z_d)\in \mathbb{C}^d$,
$\mathrm{Im}\,z=(\mathrm{Im}\,z_1,\dots,\mathrm{Im}\,z_d)$, and $\sigma>0$.

We define two classes of entire functions: $B_{p, k}^\sigma$ and
$\widetilde{B}_{p, k}^\sigma$. We say that a function $f\in B_{p, k}^\sigma$
if $f\in L^{p}(\mathbb{R}^{d},d\mu_{k})$ is such that its analytic
continuation to $\mathbb{C}^d$ satisfies
\begin{equation*}
|f(z)|\leq c_{\varepsilon}e^{(\sigma+\varepsilon)|z|},\quad \forall
\varepsilon>0,\ \forall z\in \mathbb{C}.
\end{equation*}
The smallest $\sigma=\sigma_{f}$ in this inequality is called a spherical type of $f$.
In other words, the class $B_{p, k}^\sigma$ is the collection of all entire
functions of spherical type at most $\sigma$.

We say that a function $f\in
\widetilde{B}_{p, k}^\sigma$ if $f\in L^{p}(\mathbb{R}^{d},d\mu_{k})$ is such
that its analytic continuation to $\mathbb{C}^d$ satisfies
\[
|f(z)|\leq c_{f}e^{\sigma|\mathrm{Im}\,z|},\quad \forall z\in \mathbb{C}^d.
\]

Both classes coincide \cite{iv3} and by the Paley--Wiener theorem for tempered distributions (see \cite{Jeu06,Tri02})
we get the following characterization.

\begin{proposition}[\cite{iv3}]\label{prop3.1}
A function $f\in B_{p, k}^\sigma$, $1\leq p<\infty$, iff $f\in
L^{p}(\mathbb{R}^{d},d\mu_{k})\cap C_b(\R^d)$ and
$\mathrm{supp}\,\mathcal{F}_k(f)\subset B_\sigma$.
\end{proposition}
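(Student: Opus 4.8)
The plan is to establish the characterization of the Paley--Wiener class $B_{p,k}^\sigma$ via a two-sided implication, leveraging the already-stated equivalence of $B_{p,k}^\sigma$ with the class $\widetilde B_{p,k}^\sigma$ defined by the growth condition $|f(z)|\le c_f e^{\sigma|\mathrm{Im}\,z|}$, together with the Dunkl version of the Paley--Wiener theorem for tempered distributions quoted from \cite{Jeu06,Tri02}.

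First I would prove the easier forward direction. Suppose $f\in L^p(\mathbb{R}^d,d\mu_k)$ and $\operatorname{supp}\mathcal{F}_k(f)\subset B_\sigma$. Since $\mathcal{F}_k(f)$ is then compactly supported, one regards it as a compactly supported tempered distribution and writes $f$ as the inverse Dunkl transform $f(x)=\int_{B_\sigma}\mathcal{F}_k(f)(y)\,e_k(x,y)\,d\mu_k(y)$, interpreted distributionally. Using the holomorphic extension of the Dunkl kernel $e_k(x,y)$ to $z\in\mathbb{C}^d$ together with the integral representation from \cite{Ros99} recalled above (with $\mu_x^k$ a probability measure supported in $\mathrm{co}\{gx\}$), one obtains the bound $|e_k(z,y)|\le e^{|y|\,|\mathrm{Im}\,z|}$; integrating against the compactly supported $\mathcal{F}_k(f)$ over $B_\sigma$ then yields $|f(z)|\le c_f e^{\sigma|\mathrm{Im}\,z|}$. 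Hence $f\in\widetilde B_{p,k}^\sigma=B_{p,k}^\sigma$, and the continuity/boundedness $f\in C_b(\mathbb{R}^d)$ follows from the inversion formula.

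For the converse, I would start from $f\in B_{p,k}^\sigma=\widetilde B_{p,k}^\sigma$, so that $f$ extends to an entire function on $\mathbb{C}^d$ of exponential type $\sigma$ in the sense $|f(z)|\le c_f e^{\sigma|\mathrm{Im}\,z|}$, and $f\in L^p$. Passing to $\mathcal{F}_k(f)$ as a tempered distribution, the Dunkl Paley--Wiener theorem cited from \cite{Jeu06,Tri02} identifies entire functions of exponential type $\sigma$ with distributions whose Dunkl transform is supported in $B_\sigma$; applying it gives $\operatorname{supp}\mathcal{F}_k(f)\subset B_\sigma$ directly. The remaining point is that $f\in C_b(\mathbb{R}^d)$: since the spectrum is now compact, $\mathcal{F}_k(f)$ is a compactly supported distribution, so $f$ is represented by the inverse transform as in the forward direction and is therefore continuous and bounded.

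The main obstacle I anticipate is the careful distributional interpretation in the converse: one must justify that an $L^p$ function of spherical exponential type really has Dunkl-transform support contained in $B_\sigma$, which requires invoking the Paley--Wiener theorem in exactly the form proved in \cite{Jeu06,Tri02} and checking its hypotheses match the $\widetilde B_{p,k}^\sigma$ growth condition rather than an $L^2$ framework. A secondary technical point is controlling the holomorphic extension of $e_k(x,y)$ and confirming the bound $|e_k(z,y)|\le e^{|y|\,|\mathrm{Im}\,z|}$ uniformly in $y\in B_\sigma$; this is where the support property $\operatorname{supp}\mu_x^k\subset\mathrm{co}\{gx\}$ of R\"osler's representing measure does the real work, since it keeps $|\mathrm{Im}\langle\xi,z\rangle|\le|y|\,|\mathrm{Im}\,z|$ after analytic continuation.
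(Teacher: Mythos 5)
Your proposal follows essentially the same route the paper indicates (the paper itself only sketches it, deferring to \cite{iv3}): both rest on the coincidence $B_{p,k}^\sigma=\widetilde B_{p,k}^\sigma$ and on the Dunkl Paley--Wiener theorem for tempered distributions from \cite{Jeu06,Tri02}. However, one step in your converse direction is wrong as written. You justify $f\in C_b(\R^d)$ by saying that, since $\mathcal{F}_k(f)$ is a compactly supported distribution, the inverse-transform representation makes $f$ ``continuous and bounded.'' That representation does not give boundedness: a compactly supported distribution has finite order $N$, and the pairing $\langle \mathcal{F}_k(f),\chi\, e_k(\cdot,x)\rangle$ (with $\chi$ a cutoff) is controlled by sup-norms of $y$-derivatives of $\chi(y)e_k(y,x)$ up to order $N$, which grow like $|x|^{N}$; so this argument only yields polynomial growth on $\R^d$. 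The correct argument is already in your hands and is much simpler: once $f\in\widetilde B_{p,k}^\sigma$, the defining bound $|f(z)|\le c_f e^{\sigma|\mathrm{Im}\,z|}$ restricted to real $z$ gives $|f(x)|\le c_f$, and continuity is automatic because $f$ is entire. This is also why the identity $B_{p,k}^\sigma=\widetilde B_{p,k}^\sigma$ is the genuinely nontrivial ingredient (a Plancherel--P\'olya/Nikol'skii-type statement proved in \cite{iv3}), not a formal consequence of distribution theory.

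A second, minor point concerns your forward direction for $2<p<\infty$: there $\mathcal{F}_k(f)$ is only a finite-order distribution, and pairing it with $e_k(\cdot,z)$ produces polynomial factors $(1+|z|)^{N}$ in front of $e^{\sigma|\mathrm{Im}\,z|}$ (and, because of the cutoff, type $\sigma+\delta$ for every $\delta>0$). So what you obtain directly is membership in $B_{p,k}^\sigma$, whose definition via $c_\varepsilon e^{(\sigma+\varepsilon)|z|}$ absorbs such factors, rather than the clean bound of $\widetilde B_{p,k}^\sigma$; the latter is literally available from your integral argument only when $1\le p\le 2$, where $\mathcal{F}_k(f)\in L^{1}(B_\sigma,d\mu_k)$. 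Since $B_{p,k}^\sigma$ is the target class, this does not affect the conclusion, but the claim ``hence $f\in\widetilde B_{p,k}^\sigma$'' should be weakened accordingly.
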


The Dunkl transform $\mathcal{F}_k(f)$ in Proposition \ref{prop3.1} is understood
as a function for $1\leq p\leq 2$ and as a tempered distribution for $p>2$.

Let
\[
E_{\sigma}(f)_{p,d\mu_k}=\inf\{\|f-g\|_{p,d\mu_k}\colon g\in B_{p, k}^\sigma\}
\]
be the value of the best approximation of a function $f\in L^{p}(\mathbb{R}^{d},d\mu_{k})$
by entire functions of spherical exponential type at most $\sigma$.
The best approximation is achieved \cite{iv3}.


Now we define the fractional power of the Dunkl Laplacian. Let
\[
\Phi_k=\Bigl\{f\in \mathcal{S}(\mathbb{R}^d)\colon \int_{\mathbb{R}^{d}}x^{\alpha}f(x)\,d\mu_k(x)=0,\ \alpha\in \mathbb{Z}^d_+\Bigr\}
\]
be the weighted Lizorkin space (see, \cite{Liz63,iv4,SamKilMar93}),
\[
\Psi_k=\{\mathcal{F}_{k}(f)\colon f\in\Phi_k\}.
\]
At the spaces $\Phi_k$ and $\Psi_k$ we consider the same convergence as in $\mathcal{S}(\mathbb{R}^{d})$.

We proved \cite{iv4} that
\[
\Psi_k=\Psi_0=\{\mathcal{F}(f)\colon f\in\Phi_0\}=\{f\in\mathcal{S}(\mathbb{R}^{d})\colon D^{\alpha}f(0)=0,\,\, \alpha\in \mathbb{Z}^d_+\},
\]
where $D^{\alpha}f(x)=\prod_{j=1}^dD_j^{\alpha_j}f(x)$, $\alpha\in \mathbb{Z}^d_+$, and $D_jf(x)=D_{j,0}f(x)$ is the usual partial derivative with respect to a variable $x_j$, $j=1,\dots,d$.

The spaces $\Phi_k$ and $\Psi_k$ are closed. It is evidently for the space $\Psi_k$. If a sequence $\{\varphi_n\}\subset \Phi_k$
converges to $\varphi$ in $\Phi_k$ then $\varphi\in\mathcal{S}(\mathbb{R}^{d})$
and the orthogonality of $\varphi$ to the polynomial $x^n$ follows from estimation
\begin{align*}
\Bigl|\int_{\mathbb{R}^{d}}x^n\varphi(x)\,d\mu_k(x)\Bigr|&=\Bigl|\int_{\mathbb{R}^{d}}x^n(\varphi(x)-\varphi_n(x))\,d\mu_k(x)\Bigr|\\&
\lesssim\int_{\mathbb{R}^{d}}(1+|x|^2)^{-m}\,d\mu_k(x)\,\|(1+|x|^2)^{m+n/2}(\varphi(x)-\varphi_n(x))\|_{\infty},
\end{align*}
where $m>d_k/2$. The space $\Phi_k$ is dense in $L^{p}(\mathbb{R}^{d},d\mu_{k})$, $1\leq p<\infty$ \cite{iv4}.

Let $\Phi_k'$ and $\Psi_k'$ be the spaces of distributions
on $\Phi_k$ and $\Psi_k$ accordingly. We have $\mathcal{S}'(\mathbb{R}^{d})\subset\Phi_k'$, $\mathcal{S}'(\mathbb{R}^{d})\subset\Psi_k'$.
We can multiply distributions from $\Psi_k'$ on functions from $C^{\infty}_{\Pi}(\mathbb{R}^d\setminus\{0\})$.

\begin{lemma}\label{lem3.1}
If $g\in C^{\infty}_{\Pi}(\mathbb{R}^d\setminus\{0\})$, $f\in\Psi_k'$, then $gf \in\Psi_k'$, where
\[
\<gf,\psi\>=\<f,g\psi\>,\quad \psi\in\Psi_k.
\]
\end{lemma}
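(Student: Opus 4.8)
The plan is to reduce the whole statement to one fact: \emph{multiplication by $g$ maps $\Psi_k$ continuously into itself.} Granting this, the pairing $\langle gf,\psi\rangle:=\langle f,g\psi\rangle$ is well defined for every $\psi\in\Psi_k$, because $g\psi$ again lies in $\Psi_k$, i.e.\ in the domain of the functional $f$; and since $\psi\mapsto g\psi$ is continuous on $\Psi_k$ and $f$ is continuous on $\Psi_k$, their composition $\psi\mapsto\langle f,g\psi\rangle$ is a continuous linear functional on $\Psi_k$. That is precisely the assertion $gf\in\Psi_k'$. So the entire content is the continuity of the multiplication operator, and I would spend the proof on that.

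I would use the characterization recalled just before the lemma, that $\Psi_k=\Psi_0=\{\psi\in\mathcal{S}(\mathbb{R}^d)\colon D^\alpha\psi(0)=0,\ \alpha\in\mathbb{Z}^d_+\}$, carrying the topology induced from $\mathcal{S}(\mathbb{R}^d)$; thus $\Psi_k$ consists of Schwartz functions vanishing to infinite order at the origin. Writing $g=|x|^p h$ with $h\in C^\infty_\Pi(\mathbb{R}^d)$ and $p\in\mathbb{R}$, the first step is to verify $g\psi\in\mathcal{S}(\mathbb{R}^d)$ and that $g\psi$ still vanishes to infinite order at $0$. Away from the origin $g$ is smooth, so $g\psi$ is smooth there; at infinity the derivatives of $g$ grow at most polynomially while $\psi$ and all its derivatives decay rapidly, so $g\psi$ and its derivatives decay rapidly. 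The only delicate point is the origin.

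For the behaviour near $0$ the tool is the quantitative vanishing of $\psi$: since every derivative of $\psi$ vanishes at $0$, Taylor's formula with integral remainder gives, for each multi-index $\gamma$ and each $N$, a bound $|\partial^\gamma\psi(x)|\lesssim|x|^{N}\,p_{|\gamma|+N}(\psi)$ for $|x|\le1$, where $p_m(\psi)$ is a Schwartz seminorm of $\psi$ of order $m$. Expanding $\partial^\gamma(g\psi)$ by the Leibniz rule and using $\partial^\beta g=O(|x|^{p-|\beta|})$ as $x\to0$ (the derivatives of $h$ staying bounded there), each term is $O(|x|^{p-|\beta|+N})$, which for $N$ large tends to $0$ at the origin. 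The standard extension criterion — a function smooth off a point whose formal derivatives all extend continuously to that point is genuinely smooth there — then yields $g\psi\in C^\infty$ with $\partial^\gamma(g\psi)(0)=0$ for all $\gamma$; combined with the rapid decay this gives $g\psi\in\Psi_k$. Continuity follows by making these estimates uniform: to bound a seminorm $\sup_x|x^\alpha\partial^\gamma(g\psi)(x)|$ I split into $|x|\ge1$, where the polynomial growth of the derivatives of $g$ is absorbed by a high-order Schwartz seminorm of $\psi$, and $|x|\le1$, where the Taylor estimate above provides a factor $|x|^{N}$ that dominates the singular $|x|^{p-|\beta|}$ once $N$ is chosen large. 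Hence each seminorm of $g\psi$ is controlled by a finite sum of seminorms of $\psi$, which is exactly the required continuity.

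The hard part is the origin. The factor $|x|^{p}$ with $p$ negative or non-integer is genuinely singular there, and the whole reason for working in the Lizorkin-type space $\Psi_k$ rather than in all of $\mathcal{S}(\mathbb{R}^d)$ is that the infinite-order vanishing of $\psi$ at $0$ exactly cancels this singularity. The Taylor remainder estimate is what upgrades this qualitative cancellation into the uniform seminorm bounds needed for continuity, and thereby for the duality definition of $gf$ to make sense.
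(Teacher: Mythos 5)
Your proof is correct, and its skeleton matches the paper's: both reduce the lemma to the continuity of $\psi\mapsto g\psi$ on $\Psi_k$, factor $g(x)=|x|^p h(x)$ with $h\in C^{\infty}_{\Pi}(\mathbb{R}^d)$ so that only the singular factor $|x|^p$ needs attention, and expand derivatives by the Leibniz rule. The difference is in how the singularity at the origin is absorbed. The paper invokes, as a citation to \cite{SamKilMar93}, the fact that on $\Psi_k$ the Schwartz topology is equivalently generated by the seminorms $q_{n,\alpha,m}(\psi)=\sup_x(1+|x|^2)^{n/2}|x|^{-m}|D^{\alpha}\psi(x)|$, and then bounds each Leibniz term $|x|^{r-2s}t_{s,\beta}(x)D^{\alpha-\beta}\psi(x)$ by finitely many of these seminorms. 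You instead prove the needed quantitative vanishing directly: Taylor's formula with integral remainder gives $|\partial^{\gamma}\psi(x)|\lesssim |x|^{N}p_{|\gamma|+N}(\psi)$ for $|x|\le 1$, which is precisely the nontrivial half of the cited equivalence (domination of the $q$-seminorms by the Schwartz seminorms). So your argument is self-contained exactly where the paper leans on a reference. You also verify explicitly that $g\psi$ is genuinely smooth at the origin with all derivatives vanishing there (via the extension criterion), i.e.\ that $g\psi\in\Psi_k$ at all --- a membership claim the paper's proof asserts without comment when it writes $\{g\psi_l\}\subset\Psi_k$. What the paper's route buys is brevity; what yours buys is completeness and independence from the external reference.
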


\begin{proof} It is necessary to prove that if a sequence $\{\psi_l\}\subset\Psi_k$ converges to zero in topology of
$\mathcal{S}(\mathbb{R}^{d})$, then the sequence $\{g\psi_l\}\subset\Psi_k$ converges to zero in topology of
$\mathcal{S}(\mathbb{R}^{d})$ too. We can prove it only for the sequence $\{|x|^r\psi_l\}$, since $g(x)=|x|^rg_1(x)$,
$g_1\in C^{\infty}_{\Pi}(\mathbb{R}^d)$, and the sequence $\{g_1\psi_l\}$ converges to zero in topology of
$\mathcal{S}(\mathbb{R}^{d})$. The topology of $\mathcal{S}(\mathbb{R}^{d})$ is generated by a countable family of norms
\[
p_{n,\alpha}(\varphi)=\sup_{x}(1+|x|^2)^{n/2}|D^{\alpha}\varphi(x)|,\quad \varphi\in \mathcal{S}(\mathbb{R}^{d}),
n\in\mathbb{Z}_+, \alpha\in\mathbb{Z}^d_+.
\]
It is known that this topology on $\Psi_k$ is equivalent to the topology defined by a countable family of norms
\begin{equation}\label{topology}
q_{n,\alpha,m}(\psi)=\sup_{x}(1+|x|^2)^{n/2}|x|^{-m}|D^{\alpha}\psi(x)|,\quad \psi\in \Psi_k, m\in\mathbb{Z}_+
\end{equation}
(see, \cite{SamKilMar93}). Using Liouville formula
\[
D^{\alpha}(|x|^r\psi(x))=\sum_{\beta\leq\alpha}c_{\alpha,\beta}D^{\beta}(|x|^r)D^{\alpha-\beta}\psi(x)=
\sum_{\beta\leq\alpha}c_{\alpha,\beta}\sum_{s=1}^{|\beta|_1}|x|^{r-2s}t_{s,\beta}(x)D^{\alpha-\beta}\psi(x),
\]
where $t_{s,\beta}(x)$ are polynomials of degree at most $s$, we can estimate $p_{n,\alpha}(|\Cdot|^r\psi)$ through
a finite sum of norms \eqref{topology}. Lemma~\ref{lem3.1} is proved.
\end{proof}

\begin{remark}
Next, using multipliers from $C^{\infty}_{\Pi}(\mathbb{R}^d\setminus\{0\})$ and Dunkl transform we define
several distributions. Since a sequence $\{\varphi_l\}\subset\Phi_k$ converges to zero in topology of
$\mathcal{S}(\mathbb{R}^{d})$, iff the sequence $\{\mathcal{F}_k(\varphi_l)\}\subset\Psi_k$ converges to zero
in topology of $\mathcal{S}(\mathbb{R}^{d})$, then by Lemma~\ref{lem3.1} all functionals will be continue.
\end{remark}

Let $f\in\mathcal{S}'(\mathbb{R}^{d})$. If $f(\Psi_k)=0$, then $\mathrm{supp}\,f=\{0\}$
and $f=\sum_{|\alpha|_1\leq N}c_{\alpha} D^{\alpha}\delta_0$, where $\<\delta_0,\varphi\>=\varphi(0)$.
Since $\mathcal{F}_k(\sum_{|\alpha|_1\leq N}c_\alpha D^{\alpha}\delta_0)\in\Pi$, then $f\in\Pi$ if $f(\Phi_k)=0$.
So, if $f=0$ in $\Phi_k'$ and
$f\notin \Pi$, then $f=0$ in $\mathcal{S}'(\mathbb{R}^{d})$.
We can assume that $\Phi_k'$ is a factor space $\Phi_k'=\mathcal{S}'(\mathbb{R}^{d})/\Pi$.
Further distributions from $\Phi_k'$, differing by the polynomial, we will not distinguish.
Analogously, $\Psi_k'=\mathcal{S}'(\mathbb{R}^{d})/\mathcal{F}_k(\Pi)$.

Let $r>0$. First we define the $r$-th power of the Dunkl Laplacian for $\varphi\in\Phi_k$
as follows
\[
(-\Delta_k)^r\varphi=\mathcal{F}_k^{-1}(|\Cdot|^{2r}\mathcal{F}_k(\varphi))
=\mathcal{F}_k(|\Cdot|^{2r}\mathcal{F}_k^{-1}(\varphi))\in\Phi_k.
\]
For $f\in\Phi_k'$ the distribution $(-\Delta_k)^rf\in\Phi_k'$ is defined  by relation
\[
\<(-\Delta_k)^rf,\varphi\>=\<f,(-\Delta_k)^r\varphi\>=
\<f,\mathcal{F}_k^{-1}(|\Cdot|^{2r}\mathcal{F}_k(\varphi))\>,\quad  \varphi\in \Phi_k.
\]

Let $W^{r}_{p, k}$, $1\leq p\leq\infty$, be the Sobolev space, that is,
\[
W^{r}_{p, k}=\{f\in L^{p}(\mathbb{R}^{d},d\mu_{k})\colon (-\Delta_k)^{r/2}f\in
L^{p}(\mathbb{R}^{d},d\mu_{k})\}
\]
equipped with the  Banach norm
\[
\|f\|_{W^{r}_{p,k}}=\|f\|_{p,d\mu_k}+\|(-\Delta_k)^{r/2}f\|_{p,d\mu_k}.
\]

Now for distributions we define direct and inverse Dunkl transforms $\mathcal{F}_k,\mathcal{F}_k^{-1}$,
generalized translation operators $\tau^y$, $T^t$, and convolution $(f\Ast{k}g)$.

For $f\in\Phi_k'$ direct Dunkl transform
$\mathcal{F}_k(f)\in\Psi_k'$ is defined by
\[
\<\mathcal{F}_k(f),\psi\>=\<f,\mathcal{F}_k(\psi)\>, \quad\psi\in\Psi_k.
\]
For $g\in\Psi_k'$ inverse Dunkl transform
$\mathcal{F}_k^{-1}(g)\in\Phi_k'$ is defined by
\[
\<\mathcal{F}_k^{-1}(g),\varphi\>=\<g,\mathcal{F}_k^{-1}(\varphi)\>, \quad\varphi\in\Phi_k.
\]
We have
\[
\mathcal{F}_k^{-1}(\mathcal{F}_k(f))=f,\quad\mathcal{F}_k(\mathcal{F}_k^{-1}(g))=g,
\quad f\in\Phi_k',\,\,g\in\Psi_k'.
\]

Note that $f=g$ in $\Phi_k'$ iff $\mathcal{F}_k(f)=\mathcal{F}_k(g)$ in $\Psi_k'$.

For $f\in\Phi_k'$ the generalized translation operators $\tau^yf,T^tf\in\Phi_k'$ are defined as follows
\begin{equation*}
\begin{gathered}
\<\tau^yf,\varphi\>=\<f,\tau^{-y}\varphi\>=\<f,\mathcal{F}_k^{-1}(e_k(-y,\Cdot)\mathcal{F}_k(\varphi))\>,
\quad \varphi\in \Phi_k,\quad y\in\mathbb{R}^{d},\\
\<T^tf,\varphi\>=\<f,T^t\varphi\>=\<f,\mathcal{F}_k^{-1}(j_{\lambda_k}(t|\Cdot|)\mathcal{F}_k(\varphi))\>,
\quad \varphi\in \Phi_k,\quad t\in\mathbb{R}_+.
\end{gathered}
\end{equation*}

For the Dunkl transform of the considered operators and their compositions we have the following easily verifiable equalities
\begin{equation}\label{distributiontransform}
\begin{gathered}
\mathcal{F}_k((-\Delta_k)^{r}f)=|\Cdot|^{2r}\mathcal{F}_k(f),\quad \mathcal{F}_k(\tau^yf)=e_k(y,\Cdot)\mathcal{F}_k(f),\\
\mathcal{F}_k((-\Delta_k)^{r}\tau^yf)=|\Cdot|^{2r}e_k(y,\Cdot)\mathcal{F}_k(f), \quad
\mathcal{F}_k(T^tf)=j_{\lambda_k}(t|\Cdot|)\mathcal{F}_k(f),\\
\mathcal{F}_k((-\Delta_k)^{r}T^tf)=|\Cdot|^{2r}j_{\lambda_k}(t|\Cdot|)\mathcal{F}_k(f),\\
\mathcal{F}_k(T^t(\tau^yf))=j_{\lambda_k}(t|\Cdot|)e_k(y,\Cdot)\mathcal{F}_k(f).
\end{gathered}
\end{equation}
This implies the commutativity of these compositions.

Let $\varphi\in\Phi_k$, $\check{\varphi}(y)=\varphi(-y)$. We call $f\in\Phi_k'$ even if
$\<f,\check{\varphi}\>=\<f,\varphi\>$. Even $g\in\Psi_k'$ is defined similarly. Note that
$f\in\Phi_k'$ is even iff $\mathcal{F}(f)\in\Psi_k'$ is even.

Let $N_k$ be a set of even $f\in\Phi_k'$ for which $\mathcal{F}_k(f)\in C^{\infty}_{\Pi}(\mathbb{R}^d\setminus\{0\})$.
For $f\in N_k$ and $\varphi\in\Phi_k$ we set
\[
(f\Ast{k}\varphi)(x)=\<\tau^{x}f,\check{\varphi}\>=\<f,\tau^{-x}\check{\varphi}\>.
\]

\begin{lemma}\label{lem3.2}
If $g\in N_k$, $\varphi\in\Phi_k$, then $(g\Ast{k}\varphi)\in\Phi_k$ and
\[ (g\Ast{k}\varphi)(x)=\mathcal{F}_k^{-1}(\mathcal{F}_k(g)\mathcal{F}_k(\varphi))(x),\quad
\mathcal{F}_k(g\Ast{k}\varphi)(y)=\mathcal{F}_k(g)(y)\mathcal{F}_k(\varphi)(y).
\]
\end{lemma}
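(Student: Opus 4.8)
The plan is to reduce both assertions to the single pointwise identity
\[
(g\Ast{k}\varphi)(x)=\mathcal{F}_k^{-1}\bigl(\mathcal{F}_k(g)\,\mathcal{F}_k(\varphi)\bigr)(x),
\]
after which the membership $(g\Ast{k}\varphi)\in\Phi_k$ and the transform formula both follow with essentially no extra work. Write $G=\mathcal{F}_k(g)$, which by hypothesis is a function in $C^{\infty}_{\Pi}(\mathbb{R}^d\setminus\{0\})$, and $\widehat{\varphi}=\mathcal{F}_k(\varphi)\in\Psi_k$. Since every $\psi\in\Psi_k$ vanishes to infinite order at the origin and is rapidly decreasing, the product $G\psi$ lies in $L^1(\mathbb{R}^d,d\mu_k)$, so the distribution $G=\mathcal{F}_k(g)\in\Psi_k'$ acts by honest integration, $\<G,\psi\>=\int_{\R^d}G\psi\,d\mu_k$; this is the point where the singularity of $G$ at $0$ is absorbed.

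First I would unwind the definition $(g\Ast{k}\varphi)(x)=\<g,\tau^{-x}\check{\varphi}\>$ and pass to the transform side. For $h\in\Phi_k$ one has $\<g,h\>=\<\mathcal{F}_k(g),\mathcal{F}_k^{-1}(h)\>$, obtained by writing $h=\mathcal{F}_k(\mathcal{F}_k^{-1}(h))$ with $\mathcal{F}_k^{-1}(h)\in\Psi_k$ and invoking the defining relation of $\mathcal{F}_k$ on $\Phi_k'$. Applying this with $h=\tau^{-x}\check{\varphi}$ gives
\[
(g\Ast{k}\varphi)(x)=\<G,\mathcal{F}_k^{-1}(\tau^{-x}\check{\varphi})\>.
\]

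The key computation is then $\mathcal{F}_k^{-1}(\tau^{-x}\check{\varphi})(y)=e_k(x,y)\widehat{\varphi}(y)$. I would start from $\mathcal{F}_k(\tau^{-x}\check{\varphi})(z)=e_k(-x,z)\mathcal{F}_k(\check{\varphi})(z)$, use $\mathcal{F}_k(\check{\varphi})(z)=\mathcal{F}_k(\varphi)(-z)$ (which comes from $\check{\varphi}(u)=\varphi(-u)$ together with $\overline{e_k(-u,z)}=e_k(u,z)$ and evenness of $d\mu_k$), and finally $\mathcal{F}_k^{-1}(\cdot)(y)=\mathcal{F}_k(\cdot)(-y)$ combined with the symmetry $e_k(-x,-y)=e_k(x,y)$. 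Feeding this into the pairing and using that $G$ integrates against $d\mu_k$ yields
\[
(g\Ast{k}\varphi)(x)=\int_{\R^d}G(y)\,\widehat{\varphi}(y)\,e_k(x,y)\,d\mu_k(y),
\]
which is exactly $\mathcal{F}_k^{-1}(G\widehat{\varphi})(x)$ by the inversion formula of Proposition~\ref{prop2.1}(2), legitimate because $G\widehat{\varphi}\in\Psi_k\subset\mathcal{A}_k$. This proves the pointwise identity.

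To finish, I would note that $G\widehat{\varphi}\in\Psi_k$ by Lemma~\ref{lem3.1} (multiplication by $G\in C^{\infty}_{\Pi}(\mathbb{R}^d\setminus\{0\})$ maps $\Psi_k$ into itself), so $\mathcal{F}_k^{-1}(G\widehat{\varphi})\in\Phi_k$ and hence $(g\Ast{k}\varphi)\in\Phi_k$; applying $\mathcal{F}_k$ to the pointwise identity then gives $\mathcal{F}_k(g\Ast{k}\varphi)=G\widehat{\varphi}=\mathcal{F}_k(g)\mathcal{F}_k(\varphi)$. I expect the main obstacle to be the bookkeeping in the key computation---correctly tracking the reflections introduced by $\check{\varphi}$, by $\mathcal{F}_k^{-1}$ versus $\mathcal{F}_k$, and by the sign conventions in $e_k$---and, at the level of rigor, justifying that $\mathcal{F}_k(g)$ really acts as the function $G$ against test functions in $\Psi_k$ despite its singularity at the origin.
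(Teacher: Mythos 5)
Your proposal is correct and follows essentially the same route as the paper's proof: both unwind $(g\Ast{k}\varphi)(x)=\<g,\tau^{-x}\check{\varphi}\>$, establish the same key identity (the paper writes it as $\tau^{-x}\check{\varphi}=\mathcal{F}_k\bigl(e_k(x,\Cdot)\mathcal{F}_k(\varphi)\bigr)$, you as $\mathcal{F}_k^{-1}(\tau^{-x}\check{\varphi})=e_k(x,\Cdot)\mathcal{F}_k(\varphi)$, which is the same statement), then use the distributional definition of $\mathcal{F}_k$ and the fact that $\mathcal{F}_k(g)$ acts by integration against $\Psi_k$ to land on $\mathcal{F}_k^{-1}\bigl(\mathcal{F}_k(g)\mathcal{F}_k(\varphi)\bigr)(x)$. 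Your added remarks on integrability of $G\psi$ and on Lemma~\ref{lem3.1} guaranteeing $G\widehat{\varphi}\in\Psi_k$ only make explicit what the paper leaves implicit.
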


\begin{proof} We have
\begin{align*}
\tau^{-x}\check{\varphi}(y)&=\mathcal{F}_k^{-1}(e_k(-x,\Cdot)\mathcal{F}_k(\check{\varphi}))(y)\\&
=\int_{\mathbb{R}^d}e_k(-x,z)e_k(y,z)\mathcal{F}_k(\check{\varphi})(z)\,d\mu_k(z)\\&
=\int_{\mathbb{R}^d}e_k(-x,z)e_k(y,z)\mathcal{F}_k(\varphi)(-z)\,d\mu_k(z)\\&
=\int_{\mathbb{R}^d}e_k(x,z)e_k(-y,z)\mathcal{F}_k(\varphi)(z)\,d\mu_k(z)\\&
=\mathcal{F}_k(e_k(x,\Cdot)\mathcal{F}_k(\varphi))(y)=\tau^{x}\varphi(-y)\in\Phi_k(\mathbb{R}^{d}).
\end{align*}
Hence, by definition
\begin{align*}
(g\Ast{k}\varphi)(x)&=\<g,\tau^{-x}\check{\varphi}\>=\<g,\mathcal{F}_k(e_k(x,\Cdot)\mathcal{F}_k(\varphi))\>
=\<\mathcal{F}_k(g),e_k(x,\Cdot)\mathcal{F}_k(\varphi)\>\\&
=\int_{\mathbb{R}^d}e_k(x,z)\mathcal{F}_k(g)(z)\mathcal{F}_k(\varphi)(z)\,d\mu_k(z)\\&
=\mathcal{F}_k^{-1}(\mathcal{F}_k(g)\mathcal{F}_k(\varphi))(x)\in\Phi_k(\mathbb{R}^{d}).
\end{align*}
Lemma~\ref{lem3.2} is proved.
\end{proof}

Using Lemma~\ref{lem3.2}, we can define a convolution $(f\Ast{k}g)\in\Phi_k'$ for $f\in\Phi_k'$ and $g\in N_k$ as follows
\begin{equation}\label{convolution2}
\<(f\Ast{k}g),\varphi\>=\<f,(g\Ast{k}\varphi)\,\>,\quad \varphi\in\Phi_k.
\end{equation}
By Lemma~\ref{lem3.3} and $\mathcal{F}_k(\mathcal{F}_k(\psi))=\check{\psi}$ for $\psi\in\Psi_k$, we obtain
\begin{align*}
\<\mathcal{F}_k(f\Ast{k}g),\psi\>&=\<(f\Ast{k}g),\mathcal{F}_k(\psi)\>=\<f,(g\Ast{k}\mathcal{F}_k(\psi))\>\\&
=\<f,\mathcal{F}_k^{-1}(\mathcal{F}_k(g)\mathcal{F}_k(\mathcal{F}_k(\psi)))\>=\<f,\mathcal{F}_k^{-1}(\mathcal{F}_k(g)\check{\psi})\>\\&
=\<f,\mathcal{F}_k(\mathcal{F}_k(g)\psi)\>=\<\mathcal{F}_k(f),\mathcal{F}_k(g)\psi\>=\<\mathcal{F}_k(g)\mathcal{F}_k(f),\psi\>,
\end{align*}
hence
\begin{equation}\label{transformconvolution2}
\mathcal{F}_k(f\Ast{k}g)=\mathcal{F}_k(g)\mathcal{F}_k(f).
\end{equation}

We give some simple properties of convolution. The distribution $|\Cdot|^r\in\Psi_k'$ is even,
$G_r=\mathcal{F}_k^{-1}(|\Cdot|^r)\in N_k$ and $(-\Delta)^{r/2}f=(f\Ast{k}G_r)$ for $f\in\Phi_k'$.
If $g_1,g_2\in N_k$, then $(g_1\Ast{k}g_2)\in N_k$ and $(g_1\Ast{k}g_2)=(g_2\Ast{k}g_1)$. If $f\in\Phi_k'$ and $g_1,g_2\in N_k$ then
\begin{equation*}
(f\Ast{k}(g_1\Ast{k}g_2))=((f\Ast{k}g_1)\Ast{k}g_2).
\end{equation*}

We have
\begin{equation*}
(-\Delta_k)^{r}(f\Ast{k}g)=((-\Delta_k)^{r}f\Ast{k}g).
\end{equation*}
Indeed, by \eqref{distributiontransform}, \eqref{transformconvolution2},
\begin{align*}
\mathcal{F}_k((-\Delta_k)^{r}(f\Ast{k}g))&=|\Cdot|^{2r}\mathcal{F}_k(f\Ast{k}g)=|\Cdot|^{2r}\mathcal{F}_k(f)\mathcal{F}_k(g)\\&
=\mathcal{F}_k((-\Delta_k)^{r}f)\mathcal{F}_k(g)=\mathcal{F}_k((-\Delta_k)^{r}f\Ast{k}g).
\end{align*}

\begin{lemma}\label{lem3.3}
If $r>0$, $1\leq p\leq\infty$, then $\mathcal{S}(\mathbb{R}^d)\subset W^{r}_{p, k}$. More exactly,
if $f\in\mathcal{S}(\mathbb{R}^d)$, then $\mathcal{F}_k(|\Cdot|^r\mathcal{F}_k(f))\in L^1(\mathbb{R}^d,d\mu_k)$
and $(-\Delta)^{r/2}f\in\mathcal{A}_k$.
\end{lemma}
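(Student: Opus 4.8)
The plan is to reduce everything to the single integrability assertion (the first claim) and then read off membership in $\mathcal{A}_k$ and in $W^{r}_{p,k}$ by bookkeeping with the definitions and Proposition~\ref{prop2.1}. Write $h=\mathcal{F}_k(f)\in\mathcal{S}(\mathbb{R}^d)$ (Proposition~\ref{prop2.1}(3)) and $g=|\Cdot|^{r}h$. First I would record the easy half: since $r>0$ and $v_k$ grows at most polynomially while $h$ decays rapidly, $g\in L^{1}(\mathbb{R}^d,d\mu_k)\cap C_b(\mathbb{R}^d)$, whence $\mathcal{F}_k(g)\in C_0\subset C_b$ by Proposition~\ref{prop2.1}(1). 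Once the first claim $\mathcal{F}_k(g)\in L^{1}(\mathbb{R}^d,d\mu_k)$ is in hand, $(-\Delta_k)^{r/2}f=\mathcal{F}_k^{-1}(g)=\mathcal{F}_k(g)(-\Cdot)$ lies in $C_b$, lies in $L^{1}$ (this is precisely the first claim), and satisfies $\mathcal{F}_k((-\Delta_k)^{r/2}f)=g\in L^{1}$; hence $(-\Delta_k)^{r/2}f\in\mathcal{A}_k$. Finally $\mathcal{A}_k\subset L^{1}\cap L^{\infty}\subset L^{p}(\mathbb{R}^d,d\mu_k)$ for every $1\le p\le\infty$, and $f\in L^{p}$ trivially, so $f\in W^{r}_{p,k}$.

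The whole matter thus reduces to $\mathcal{F}_k(g)\in L^{1}(\mathbb{R}^d,d\mu_k)$, and this is the genuine obstacle: $g$ is smooth and rapidly decreasing away from the origin but carries a $|x|^{r}$ singularity there, so for small non-even $r$ one \emph{cannot} reach the decay $|y|^{-d_k-\varepsilon}$ needed for $L^{1}$ at infinity by applying integer powers of $\Delta_k$ (the admissible range $d_k<2N<r+d_k$ may contain no integer). To separate the two behaviours I would cut off with $\chi\in C^{\infty}$, $\chi\equiv1$ near $0$ and compactly supported, and split $g=g_1+g_2$ with $g_1=\chi g$ and $g_2=(1-\chi)g$. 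Here $g_2\in\mathcal{S}(\mathbb{R}^d)$ because $(1-\chi)|x|^{r}\in C^{\infty}_{\Pi}(\mathbb{R}^d)$, so $\mathcal{F}_k(g_2)\in\mathcal{S}\subset L^{1}(d\mu_k)$ by Proposition~\ref{prop2.1}(3). The singular, compactly supported piece $g_1=|x|^{r}\tilde h$ with $\tilde h=\chi h\in C_c^{\infty}$ is the heart of the matter.

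For $g_1$ I would exploit a dyadic rescaling tied to the homogeneities of $d\mu_k$ (degree $d_k$) and of the Dunkl kernel ($e_k(\lambda x,y)=e_k(x,\lambda y)$). Fix a bump $\psi\in C_c^{\infty}(\{1/2\le|u|\le2\})$ with $\sum_{j\ge0}\psi(2^{j}x)=1$ near $0$, and set $G_j(x)=|x|^{r}\tilde h(x)\psi(2^{j}x)$, so that $g_1=\sum_{j\ge0}G_j$. Writing $G_j(x)=2^{-jr}\Phi_j(2^{j}x)$ with $\Phi_j(u)=|u|^{r}\tilde h(2^{-j}u)\psi(u)$, the $\Phi_j$ are supported in the fixed annulus and bounded in every Schwartz seminorm uniformly in $j$ (the factor $\tilde h(2^{-j}\Cdot)$ only improves the derivative bounds). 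The scaling rule $\mathcal{F}_k(\phi(a\Cdot))(y)=a^{-d_k}\mathcal{F}_k(\phi)(y/a)$ then gives
\[
\mathcal{F}_k(G_j)(y)=2^{-j(r+d_k)}\mathcal{F}_k(\Phi_j)(2^{-j}y),
\]
and since continuity of $\mathcal{F}_k\colon\mathcal{S}\to\mathcal{S}$ makes $\mathcal{F}_k(\Phi_j)$ rapidly decreasing uniformly in $j$, I would estimate, with $M>d_k$,
\[
\|\mathcal{F}_k(G_j)\|_{1,d\mu_k}\lesssim 2^{-j(r+d_k)}\int_{\mathbb{R}^d}(1+2^{-j}|y|)^{-M}\,d\mu_k(y)\lesssim 2^{-j(r+d_k)}2^{jd_k}=2^{-jr},
\]
the last step again by the substitution $y=2^{j}z$.

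Summing the geometric series $\sum_{j\ge0}2^{-jr}<\infty$ (this is exactly where $r>0$ enters) yields $\mathcal{F}_k(g_1)\in L^{1}(d\mu_k)$, which together with $\mathcal{F}_k(g_2)\in\mathcal{S}$ completes the first claim and hence the lemma. The only point needing a word of care is the legitimacy of transforming the series termwise: since $\sum_j\|G_j\|_{1,d\mu_k}\lesssim\sum_j 2^{-j(r+d_k)}2^{jd_k}<\infty$ as well, the sum $g_1=\sum_jG_j$ converges in $L^{1}(d\mu_k)$, so $\mathcal{F}_k(g_1)=\sum_j\mathcal{F}_k(G_j)$ with the displayed majorant controlling its $L^{1}$ norm.
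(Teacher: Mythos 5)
Your proof is correct, and for the key step it takes a genuinely different route from the paper. Both arguments make the same initial reduction: everything hinges on showing $\mathcal{F}_k(|\Cdot|^r\mathcal{F}_k(f))\in L^1(\mathbb{R}^d,d\mu_k)$, after which membership in $\mathcal{A}_k$ and in $W^r_{p,k}$ is bookkeeping with Proposition~\ref{prop2.1}. For that core claim the paper does not localize at all: it expands $|y|^r=\sum_{s\ge 0}(-1)^s\binom{r}{s}(1+|y|^2)^{r-s}$, absorbs the finitely many terms with $s\le r$ into a Schwartz function, and controls each remaining term $(1+|y|^2)^{r-s}\mathcal{F}_k(f)$ by recognizing it as $\mathcal{F}_k(f\Ast{k}h_{s-r})$ and invoking Platonov's bound $\|\mathcal{F}_k((1+|\Cdot|^2)^{-\alpha})\|_{1,d\mu_k}\le 1$ (via the Hankel transform and \eqref{restriction}) together with Young's inequality (Proposition~\ref{prop2.4}); summability of $\sum_s|\binom{r}{s}|$ then closes the argument. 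You instead cut off the singularity at the origin and resolve it by a dyadic decomposition, using the scaling covariance $\mathcal{F}_k(\phi(a\Cdot))(y)=a^{-d_k}\mathcal{F}_k(\phi)(y/a)$ (which indeed follows from $e_k(ax,y)=e_k(x,ay)$ and homogeneity of $d\mu_k$ of degree $d_k$) plus uniform Schwartz-seminorm bounds on the rescaled pieces. Your route is more self-contained: it needs neither the external Hankel-transform bound nor the convolution inequality, only continuity of $\mathcal{F}_k$ on $\mathcal{S}(\mathbb{R}^d)$, and it makes visible exactly where $r>0$ enters (the geometric series $\sum_j 2^{-jr}$); it also generalizes immediately to other multipliers homogeneous near the origin. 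The paper's route stays entirely within machinery it has already set up and yields the quantitative bound $\|\mathcal{F}_k(g_2)\|_{1,d\mu_k}\le c(r)\|f\|_{1,d\mu_k}$ in a form reused elsewhere. Two small points to tighten in your write-up, neither affecting correctness: choose $\supp\chi$ inside the region where $\sum_{j\ge0}\psi(2^j\Cdot)\equiv 1$, so that $g_1=\sum_{j\ge0}G_j$ holds exactly; and note that the sharp count is $\|G_j\|_{1,d\mu_k}\lesssim 2^{-j(r+d_k)}$ --- your stated $2^{-jr}$ is a harmless overestimate that is still summable.
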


\begin{proof} As $\mathcal{F}_k((-\Delta)^{r/2}f)(y)=|y|^r\mathcal{F}_k(f)(y)$=$|y|^rg(y)$,
where $g\in\mathcal{S}(\mathbb{R}^d)$ and $|y|^rg(y)\in L^1(\mathbb{R}^d,d\mu_k)\cap C_b(\mathbb{R}^d)$,
it is sufficient to show $\mathcal{F}_k(|\Cdot|^rg)\in L^1(\mathbb{R}^d,d\mu_k)$. We have
\[
|y|^r=(1+|y|^2)^r\Bigl(1-\frac{1}{1+|y|^2}\Bigr)^r=\sum_{s=0}^{\infty}(-1)^{s}\binom{r}{s}\frac{1}{(1+|y|^2)^{s-r}},
\]
where according to the complement formula and the asymptotic of the gamma-function as $s\to\infty$
\[
\binom{r}{s}=\frac{\Gamma(r+1)}{\Gamma(s+1)\Gamma(r-s+1)}=
\frac{\sin\pi(s-r)\Gamma(r+1)\Gamma(s-r)}{\pi\Gamma(s+1)}=O\Bigl(\frac{1}{s^{r+1}}\Bigr),
\]
and
\begin{equation}\label{const}
\sum_{s=1}^{\infty}\Bigl|\binom{r}{s}\Bigr|=c(r)<\infty.
\end{equation}

Let us consider the following decomposition
\[
|y|^rg(y)=\sum_{s\leq r}(-1)^{s}\binom{r}{s}\frac{g(y)}{(1+|y|^2)^{s-r}}+\sum_{s>r}(-1)^{s}\binom{r}{s}\frac{g(y)}{(1+|y|^2)^{s-r}}=g_1(y)+g_2(y).
\]

Since $g_1\in\mathcal{S}(\mathbb{R}^d)$, then $\mathcal{F}_k(g_1)\in L^1(\mathbb{R}^d,d\mu_k)$.
Further from \eqref{restriction} for any $\alpha>0$
\[
h_{\alpha}(y)=\mathcal{F}_k((1+|\Cdot|^2)^{-\alpha})(y)=\mathcal{H}_{\lambda_k}((1+(\Cdot)^2)^{-\alpha})(|y|).
\]
It is known \cite{Pla07} that $\mathcal{H}_{\lambda_k}((1+(\Cdot)^2)^{-\alpha})\in L^1(\mathbb{R}_+,d\nu_{\lambda_k})$ and
\[
\|h_{\alpha}\|_{1,d\mu_k}=
\|\mathcal{H}_{\lambda_k}((1+(\Cdot)^2)^{-\alpha})\|_{1,d\nu_{\lambda_k}}\leq 1.
\]
By Proposition~\ref{prop2.4} we obtain $\mathcal{F}_k((f\Ast{k}h_{\alpha}))(y)=(1+|y|^2)^{-\alpha}g(y)$ and
\[
\|\mathcal{F}_k((1+|\Cdot|^2)^{-\alpha}g)\|_{1,d\mu_k}\leq \|h_{\alpha}\|_{1,d\mu_k}\|f\|_{1,d\mu_k}\leq\|f\|_{1,d\mu_k},
\]
therefore from~\eqref{const}
\[
\|\mathcal{F}_k(g_2)\|_{1,d\mu_k}\leq\sum_{s>r}\Bigl|\binom{r}{s}\Bigr|\|h_{s-r}\|_{1,d\mu_k}\|f\|_{1,d\mu_k}\leq c(r)\|f\|_{1,d\mu_k}.
\]
Lemma~\ref{lem3.3} is proved.
\end{proof}

\begin{lemma}\label{lem3.4}
If $f\in L^{p}(\mathbb{R}^{d},d\mu_{k})$, $g\in L^{1}_{\mathrm{rad}}(\mathbb{R}^d,d\mu_{k})$, $\mathcal{F}_k(g)\in N_k$, then both convolutions
\eqref{convolution1} and \eqref{convolution2} of these functions coincide.
\end{lemma}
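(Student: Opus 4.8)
The plan is to verify that the two objects agree as elements of $\Phi_k'$ by testing them against an arbitrary $\varphi\in\Phi_k$. The function produced by \eqref{convolution1} lies in $L^{p}(\mathbb{R}^d,d\mu_k)$ by Proposition~\ref{prop2.4}(2), so it acts on $\Phi_k$ through the pairing $\<h,\varphi\>=\int_{\mathbb{R}^d}h\varphi\,d\mu_k$, while the distribution \eqref{convolution2} acts on $\Phi_k$ by its very definition. Since an element of $\Phi_k'$ is determined by its values on $\Phi_k$, it suffices to show that both pairings with $\varphi$ equal $\int_{\mathbb{R}^d}f(y)(\varphi\Ast{k}g)(y)\,d\mu_k(y)$.

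First I would unwind the distribution \eqref{convolution2}. By its definition and Lemma~\ref{lem3.2}, its pairing with $\varphi$ is $\<f,(g\Ast{k}\varphi)\>$ with $(g\Ast{k}\varphi)=\mathcal{F}_k^{-1}(\mathcal{F}_k(g)\mathcal{F}_k(\varphi))\in\Phi_k$. The point is that this coincides with the ordinary convolution of $\varphi$ and $g$: since $\varphi\in\Phi_k\subset\mathcal{S}(\mathbb{R}^d)\subset\mathcal{A}_k$ (Proposition~\ref{prop2.1}(3)) and $g\in L^1_{\mathrm{rad}}(\mathbb{R}^d,d\mu_k)$, Proposition~\ref{prop2.4}(1) gives $\mathcal{F}_k(\varphi\Ast{k}g)=\mathcal{F}_k(\varphi)\mathcal{F}_k(g)$, the same multiplier, so by the injectivity of $\mathcal{F}_k$ on $\Phi_k'$ we obtain $(g\Ast{k}\varphi)=(\varphi\Ast{k}g)$. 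Hence the pairing of \eqref{convolution2} with $\varphi$ is exactly $\int_{\mathbb{R}^d}f(y)(\varphi\Ast{k}g)(y)\,d\mu_k(y)$.

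Next I would expand \eqref{convolution1}. Inserting its definition and interchanging the order of integration,
\[
\int_{\mathbb{R}^d}(f\Ast{k}g)(x)\,\varphi(x)\,d\mu_k(x)=\int_{\mathbb{R}^d}f(y)\Bigl(\int_{\mathbb{R}^d}\tau^{x}g(-y)\,\varphi(x)\,d\mu_k(x)\Bigr)d\mu_k(y),
\]
and it remains to identify the inner integral with $(\varphi\Ast{k}g)(y)$. For radial $g$ the representation in Proposition~\ref{prop2.2}(1) is symmetric in its two entries, and combined with the evenness of $\mathcal{F}_k(g)$ and of $d\mu_k$ it yields the pointwise identity $\tau^{x}g(-y)=\tau^{y}g(-x)$; substituting this and comparing with $(\varphi\Ast{k}g)(y)=\int_{\mathbb{R}^d}\varphi(x)\tau^{y}g(-x)\,d\mu_k(x)$ from \eqref{convolution1} finishes the identification. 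Thus both pairings coincide, and the two convolutions are equal in $\Phi_k'$.

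The main obstacle is analytic rather than algebraic, namely justifying the two interchanges. For Fubini I would dominate the double integral by $\int_{\mathbb{R}^d}|f(y)|\bigl(\int_{\mathbb{R}^d}|\tau^{x}g(-y)|\,|\varphi(x)|\,d\mu_k(x)\bigr)d\mu_k(y)$ and bound it exactly as in the Remark following Proposition~\ref{prop2.4}, using the positivity of $\tau^{x}$ on radial functions, the contraction $\|\tau^{x}g\|_{1,d\mu_k}\le\|g\|_{1,d\mu_k}$, and the rapid decay of $\varphi$, so that the iterated integral is finite for every $f\in L^{p}(\mathbb{R}^d,d\mu_k)$. The second delicate point is that the pointwise formulas of Proposition~\ref{prop2.2}(1) are stated for $g\in\mathcal{A}_{k,\mathrm{rad}}$; for general $g\in L^1_{\mathrm{rad}}(\mathbb{R}^d,d\mu_k)$ I would first establish the symmetry $\tau^{x}g(-y)=\tau^{y}g(-x)$ and the identity $(g\Ast{k}\varphi)=(\varphi\Ast{k}g)$ on the dense subclass $\mathcal{A}_{k,\mathrm{rad}}$ and then pass to the limit, the convolution estimate \eqref{ineq} guaranteeing that both sides are stable under approximation in $L^1(\mathbb{R}^d,d\mu_k)$.
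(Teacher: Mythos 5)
Your argument is correct, and it rests on the same three pillars as the paper's own proof — duality against a space of test functions, a Fubini interchange, and the symmetry of the generalized translation on radial functions — but you run the computation on the opposite side of the Dunkl transform. The paper pairs the integral convolution \eqref{convolution1} with $\psi\in\Psi_k$, interchanges integrals, and identifies the inner integral to get $\mathcal{F}_k(f\Ast{k}g)=\mathcal{F}_k(g)\mathcal{F}_k(f)$ in $\Psi_k'$, which by \eqref{transformconvolution2} is the transform of \eqref{convolution2}; it concludes from the injectivity of $\mathcal{F}_k$ on $\Phi_k'$. You instead pair both convolutions with $\varphi\in\Phi_k$ and reduce the whole lemma to the test-function identity $(g\Ast{k}\varphi)=(\varphi\Ast{k}g)$, obtained by comparing the multiplier formulas of Lemma~\ref{lem3.2} and Proposition~\ref{prop2.4}(1) and using injectivity; the general $f\in L^{p}(\mathbb{R}^{d},d\mu_{k})$ then enters only through duality. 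The two routes are Parseval-dual to each other (take $\varphi=\mathcal{F}_k(\psi)$) and of equal strength, so neither is more general. One point where your write-up is actually more careful than the paper's: the published chain invokes the pointwise representation of $\tau^{-y}g$ from Proposition~\ref{prop2.2}(1), which literally requires $g\in\mathcal{A}_{k,\mathrm{rad}}$, whereas here one only knows $g\in L^{1}_{\mathrm{rad}}(\mathbb{R}^d,d\mu_{k})$ — its transform (e.g.\ $|y|^{-r}j_{\lambda_{k},m}(t|y|)$ in Lemma~\ref{lem4.4}) need not be integrable. You flag exactly this issue and repair it by establishing the symmetry $\tau^{x}g(-y)=\tau^{y}g(-x)$ on $\mathcal{A}_{k,\mathrm{rad}}$ and passing to the $L^1$-limit with \eqref{ineq}; this density step (or, alternatively, reading the paper's equality as the $L^1$ multiplication formula rather than as a pointwise inversion) is precisely what is needed to make the argument airtight, so keep it in your final version.
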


\begin{proof} Set
\[
(f\Ast{k}g)(x)=\int_{\mathbb{R}^d}f(y)\tau^{x}g(-y)\,d\mu_{k}(y).
\]
By Proposition~\ref{prop2.4} $(f\Ast{k}g)\in L^{p}(\mathbb{R}^{d},d\mu_{k})$ and $(f\Ast{k}g)\in \Phi_k'$.
It is sufficiently to prove the equality $\mathcal{F}_k(f\Ast{k}g)=\mathcal{F}_k(g)\mathcal{F}_k(f)$ in $\Psi_k'$. For any $\psi\in \Psi_k$
we have
\begin{align*}
\<\mathcal{F}_k(f\Ast{k}g),\psi\>&=\<(f\Ast{k}g),\mathcal{F}_k(\psi)\>\\&=
\int_{\mathbb{R}^d}\int_{\mathbb{R}^d}f(y)\tau^{x}g(-y)\,d\mu_{k}(y)\mathcal{F}_k(\psi)(x)\,d\mu_{k}(x)\\&
=\int_{\mathbb{R}^d}f(y)\int_{\mathbb{R}^d}\tau^{-y}g(x)\mathcal{F}_k(\psi)(x)\,d\mu_{k}(x)d\mu_{k}(y).
\end{align*}
Since
\begin{align*}
&\int_{\mathbb{R}^d}\tau^{-y}g(x)\mathcal{F}_k(\psi)(x)\,d\mu_{k}(x)\\&\qquad =
\int_{\mathbb{R}^d}\int_{\mathbb{R}^d}e_k(-y,z)e_k(x,z)\mathcal{F}_k(g)(z)\,d\mu_{k}(z)\mathcal{F}_k(\psi)(x)\,d\mu_{k}(x)
\\&\qquad =\int_{\mathbb{R}^d}e_k(-y,z)\mathcal{F}_k(g)(z)\psi(z)\,d\mu_{k}(z)=\mathcal{F}_k(\mathcal{F}_k(g)\psi)(y),
\end{align*}
then
\begin{align*}
\<\mathcal{F}_k(f\Ast{k}g),\psi\>&=\int_{\mathbb{R}^d}f(y)\mathcal{F}_k(\mathcal{F}_k(g)\psi)\,d\mu_{k}
=\<f,\mathcal{F}_k(\mathcal{F}_k(g)\psi)\>
\\&=\<\mathcal{F}_k(f),\mathcal{F}_k(g)\psi\>=\<\mathcal{F}_k(g)\mathcal{F}_k(f),\psi\>.
\end{align*}
Lemma~\ref{lem3.4} is proved.
\end{proof}




Define the $K$-functional for the couple $(L^{p}(\mathbb{R}^{d},d\mu_{k}), W^{r}_{p, k})$
as follows
\[
K_{r}(t,
f)_{p,d\mu_k}=\inf\{\|f-g\|_{p,d\mu_k}+t^{r}\|(-\Delta_k)^{r/2}g\|_{p,d\mu_k}\colon
g\in W^{r}_{p, k}\}.
\]
Note that
for any $f_1,f_2\in L^{p}(\mathbb{R}^{d},d\mu_{k})$ and $g\in W^{r}_{p, k}$, we have
\begin{align*}
&\|f_1-g\|_{p,d\mu_k}+t^{r}\|(-\Delta_k)^rg\|_{p,d\mu_k}\\
&\qquad \leq \|f_2-g\|_{p,d\mu_k}+t^{r}\|(-\Delta_k)^{r/2}g\|_{p,d\mu_k}+\|f_1-f_2\|_{p,d\mu_k}
\end{align*}
and hence,
\begin{equation}\label{K-inequality}
|K_{r}(t, f_1)_{p,d\mu_k}-K_{r}(t, f_2)_{p,d\mu_k}|\leq \|f_1-f_2\|_{p,d\mu_k}.
\end{equation}
If $f\in W^{r}_{p, k}$, then $K_{r}(t, f)_{p,d\mu_k}\leq t^{r}\|(-\Delta_k)^{r/2}f\|_{p,d\mu_k}$ and
$\lim_{t\to 0}K_{r}(t, f)_{p,d\mu_k}=0$. This, \eqref{K-inequality} and Lemma~\ref{lem3.3}
imply that, for any $f\in L^{p}(\mathbb{R}^{d},d\mu_{k})$,
\begin{equation}\label{K-property1}
\lim_{t\to 0}K_{r}(t, f)_{p,d\mu_k}=0.
\end{equation}
Another important property of the $K$-functional is
\begin{equation}\label{K-property2}
K_{r}(\lambda t, f)_{p,d\mu_k}\leq \max\{1,\,\lambda^{r}\}K_{r}(t, f)_{p,d\mu_k}.
\end{equation}

Let $I$ be an identical operator and $m>0$. Consider the following difference
\begin{equation}\label{difference}
\varDelta_t^mf(x)=(I-T^t)^{m/2}f(x)= \sum_{s=0}^{\infty}(-1)^s\binom{\frac{m}{2}}{s}(T^{t})^sf(x).
\end{equation}
Difference \eqref{difference} coincide with the classical fractional
difference for the translation operator $T^{t}f(x)=f(x+t)$ and correspond to
the usual definition of the fractional modulus of smoothness.

The modulus of smoothness of order $m$ of a function $f\in L^{p}(\mathbb{R}^{d},d\mu_{k})$ is defined~by
\begin{equation*}
\omega_m(\delta, f)_{p,d\mu_k}=\sup_{0<t\leq\delta}\|\varDelta_t^mf(x)\|_{p,d\mu_k},
\end{equation*}

Let us mention some basic properties of this modulus of smoothness.
Using the triangle inequality, Proposition~\ref{prop2.3} and \eqref{const}, we obtain
\begin{equation}\label{ineqmodulus}
\begin{gathered}
\omega_m(\delta, f_1+f_2)_{p,d\mu_k}\leq \omega_m(\delta, f_1)_{p,d\mu_k}+\omega_m(\delta, f_2)_{p,d\mu_k},\\
 \omega_m(\delta, f)_{p,d\mu_k}\leq c(m/2)\|f\|_{p,d\mu_k}.
\end{gathered}
\end{equation}

Let $t\in\mathbb{R}_+$,
\begin{equation*}
j_{\lambda_{k},m}(t)=\sum_{s=0}^{\infty}(-1)^s\binom{\frac{m}{2}}{s}\bigl(j_{\lambda_{k}}(t)\bigr)^s=(1-j_{\lambda_{k}}(t))^{m/2},
\end{equation*}
where $\lambda_k$ is defined in \eqref{Weighteddimension}. Since $j_{\lambda_{k},m}(|y|)\in C^{\infty}_{\Pi}(\mathbb{R}^d\setminus\{0\})$,
then for $\varphi\in \Phi_k$ by \eqref{multiplier} and \eqref{difference},
\begin{equation*}
\mathcal{F}_k(\varDelta_t^m\varphi)(y)=j_{\lambda_{k},m}(t|y|)\mathcal{F}_k(\varphi)(y)\in \Psi_k,
\end{equation*}
and $\varDelta_t^m\varphi\in \Phi_k$. Hence, for $f\in \Phi_k'$ we can define distributions $\varDelta_t^mf\in \Phi_k'$
and $\mathcal{F}_k(\varDelta_t^mf)\in \Psi_k'$ by equalities
\begin{equation*}
\begin{gathered}
\<\varDelta_t^mf,\varphi\>=\<f,\varDelta_t^m\varphi\>=\<f,\mathcal{F}_k^{-1}(j_{\lambda_k,m}(t|\Cdot|)\mathcal{F}_k(\varphi))\>,
\quad \varphi\in \Phi_k,\\
\<\mathcal{F}_k(\varDelta_t^mf),\psi\>=\<f,\varDelta_t^m\mathcal{F}_k(\psi)\>=\<f,\mathcal{F}_k(j_{\lambda_k,m}(t|\Cdot|)\psi)\>,
\quad \psi\in \Psi_k.
\end{gathered}
\end{equation*}
In the last definition we used the equality
\[
\mathcal{F}_k^{-1}(j_{\lambda_k,m}(t|\Cdot|)\mathcal{F}_k(\varphi))(x)
=\mathcal{F}_k(j_{\lambda_k,m}(t|\Cdot|)\mathcal{F}_k^{-1}(\varphi))(x).
\]

Since

\[
\<f,\mathcal{F}_k(j_{\lambda_k,m}(t|\Cdot|)\psi)\>=\<\mathcal{F}_k(f),j_{\lambda_k,m}(t|\Cdot|)\psi\>=
\<j_{\lambda_k,m}(t|\Cdot|)\mathcal{F}_k(f),\psi\>,
\]
then
\begin{equation}\label{transformdifference1}
\mathcal{F}_k(\varDelta_t^mf)=j_{\lambda_k,m}(t|\Cdot|)\mathcal{F}_k(f),\quad
\end{equation}

Applying \eqref{distributiontransform}, we obtain
\begin{equation}\label{transformdifference2}
\mathcal{F}_k(\varDelta_t^m(-\Delta_k)^{r/2}f))=\mathcal{F}_k((-\Delta_k)^{r/2}\varDelta_t^mf))
=|\Cdot|^rj_{\lambda_k,m}(t|\Cdot|)\mathcal{F}_k(f).
\end{equation}

The function $j_{\lambda_{k},m}(t)$ has zero of order $m$ at the origin.
Indeed, applying the expansion
\begin{equation*}
j_{\lambda}(t)=\sum_{k=0}^{\infty}\frac{(-1)^{k}\Gamma(\lambda+1)(t/2)^{2k}}{k!\,\Gamma(k+\lambda+1)},
\end{equation*}
we get $j_{\lambda_{k},m}(t)\asymp t^{m}$ as $t\to 0$.

\begin{lemma}\label{lem3.5}
If $m,r>0$, $1\leq p\leq\infty$, and $f\in L^{p}(\mathbb{R}^{d},d\mu_{k})$, then
\begin{equation}\label{omega-inequality2}
\omega_{m+r}(\delta, f)_{p,d\mu_k}\leq c(r/2)\omega_{m}(\delta, f)_{p,d\mu_k}.
\end{equation}
\end{lemma}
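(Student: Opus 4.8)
The inequality to prove is
\[
\omega_{m+r}(\delta, f)_{p,d\mu_k}\leq c(r/2)\,\omega_{m}(\delta, f)_{p,d\mu_k},
\]
which at the level of Dunkl transforms amounts to comparing the multipliers $j_{\lambda_k,m+r}(t|y|)=(1-j_{\lambda_k}(t|y|))^{(m+r)/2}$ and $j_{\lambda_k,m}(t|y|)=(1-j_{\lambda_k}(t|y|))^{m/2}$. The plan is to factor the higher-order multiplier as
\[
(1-j_{\lambda_k}(t|y|))^{(m+r)/2}=(1-j_{\lambda_k}(t|y|))^{r/2}\cdot(1-j_{\lambda_k}(t|y|))^{m/2},
\]
interpret the extra factor $(1-j_{\lambda_k}(t|y|))^{r/2}$ as the Dunkl transform side of the operator $(I-T^t)^{r/2}=\varDelta_t^r$, and thereby realize $\varDelta_t^{m+r}$ as the composition $\varDelta_t^r\circ\varDelta_t^m$. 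Then I would bound the operator norm of the extra factor $\varDelta_t^r$ on $L^p$ by the absolute constant $c(r/2)$ from \eqref{const}.

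\textbf{Step 1: Composition identity.} First I would verify on the Lizorkin test space $\Phi_k$ (where everything is a genuine Schwartz function) that
\[
\varDelta_t^{m+r}\varphi=\varDelta_t^r(\varDelta_t^m\varphi),\qquad \varphi\in\Phi_k,
\]
which is immediate from \eqref{transformdifference1} and the multiplicativity of the multipliers $j_{\lambda_k,m+r}=j_{\lambda_k,r}\cdot j_{\lambda_k,m}$. By the duality definitions of these distributional operators, this identity then passes to all $f\in\Phi_k'$, and in particular to $f\in L^p(\mathbb{R}^d,d\mu_k)\subset\Phi_k'$.

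\textbf{Step 2: $L^p$-boundedness of $\varDelta_t^r$.} The key analytic point is that the single operator $\varDelta_t^r=(I-T^t)^{r/2}=\sum_{s=0}^{\infty}(-1)^s\binom{r/2}{s}(T^t)^s$ is bounded on $L^p(\mathbb{R}^d,d\mu_k)$ with norm at most $c(r/2)$. Indeed, by Proposition~\ref{prop2.3}(3) each power $(T^t)^s$ is a contraction on $L^p$, so the triangle inequality together with \eqref{const} gives
\[
\|\varDelta_t^r f\|_{p,d\mu_k}\leq\sum_{s=0}^{\infty}\Bigl|\binom{r/2}{s}\Bigr|\,\|(T^t)^sf\|_{p,d\mu_k}\leq c(r/2)\,\|f\|_{p,d\mu_k},
\]
where the $s=0$ term contributes $1$ and the absolute constant $c(r/2)$ absorbs the full series as in the definition of $\omega_m$ in \eqref{ineqmodulus}.

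\textbf{Step 3: Conclusion and the main obstacle.} Combining Steps 1 and 2, for each $t$ with $0<t\le\delta$,
\[
\|\varDelta_t^{m+r}f\|_{p,d\mu_k}=\|\varDelta_t^r(\varDelta_t^m f)\|_{p,d\mu_k}\leq c(r/2)\,\|\varDelta_t^m f\|_{p,d\mu_k},
\]
and taking the supremum over $0<t\le\delta$ yields \eqref{omega-inequality2}. I expect the routine part to be the norm estimate in Step 2, which follows directly from the contractivity already recorded in Proposition~\ref{prop2.3}. \emph{The main obstacle is the bookkeeping in Step 1:} one must be careful that $\varDelta_t^m f$ lands in a space where $\varDelta_t^r$ is again well defined as the $L^p$-operator, reconciling the distributional definition on $\Phi_k'$ with the series definition \eqref{difference} acting on $L^p$-functions. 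The equality of these two interpretations of $\varDelta_t^m f$ is what makes the composition legitimate and lets the clean $L^p$ bound of Step 2 apply.
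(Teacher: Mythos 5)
Your proof is correct and is essentially the paper's own argument: the paper likewise factors the multiplier $(1-j_{\lambda_k}(t|\Cdot|))^{(m+r)/2}=(1-j_{\lambda_k}(t|\Cdot|))^{r/2}(1-j_{\lambda_k}(t|\Cdot|))^{m/2}$ on the Dunkl-transform side in $\Phi_k'$, expands the extra factor into the series $\sum_{s=0}^{\infty}(-1)^s\binom{r/2}{s}(T^t)^s$ applied to $\varDelta_t^m f$, and concludes from the $L^p$-contractivity of $T^t$ (Proposition~\ref{prop2.3}) together with the constant $c(r/2)$ from \eqref{const}. Your phrasing of this as the operator composition $\varDelta_t^{m+r}=\varDelta_t^r\circ\varDelta_t^m$, with the identity checked first on $\Phi_k$ and extended by duality, is only a cosmetic repackaging of the same mechanism.
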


\begin{proof} Since for $f\in\Phi_k'$ by~\eqref{distributiontransform}, \eqref{transformdifference1},
\begin{align*}
\mathcal{F}_k(\varDelta_t^{m+r}f)&=(1-j_{\lambda_{k}}(t))^{(m+r)/2}\mathcal{F}_k(f)\\&
=(1-j_{\lambda_{k}}(t))^{r/2}(1-j_{\lambda_{k}}(t))^{m/2}\mathcal{F}_k(f)\\&
=\sum_{s=0}^{\infty}(-1)^s\binom{\frac{r}{2}}{s}(j_{\lambda_{k}}(t))^s\mathcal{F}_k(\varDelta_t^{m}f),
\end{align*}
then
\[
\varDelta_t^{m+r}f=\sum_{s=0}^{\infty}(-1)^s\binom{\frac{r}{2}}{s}(T^{t})^s\varDelta_t^{m}f.
\]
Using for $f\in L^{p}(\mathbb{R}^{d},d\mu_{k})$ Proposition~\ref{prop2.3}, we get
\[
\|\varDelta_t^{m+r}f\|_{p,d\mu_k}\leq c(r/2)\|\varDelta_t^{m}f\|_{p,d\mu_k}.
\]
Lemma~\ref{lem3.5} is proved.

\end{proof}

\section{Jackson's inequality and equivalence of modulus of smoothness and $K$-functional}

\subsection{Main results}
First we state the Jackson-type inequality.

\begin{theorem}\label{thm4.1}
Let $\sigma> 0$, $1\leq p \leq \infty$, $r\geq 0$, $m>0$. We have,
for any $f\in W_{p, k}^{r}$,
\begin{equation}\label{Jackson}
E_{\sigma}(f)_{p,d\mu_k} \lesssim
\frac{1}{\sigma^{r}}\,\omega_m\Bigl(\frac{1}{\sigma},
(-\Delta_k)^{r/2}f\Bigr)_{p,d\mu_k}.
\end{equation}

\end{theorem}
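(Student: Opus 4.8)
The plan is to realise the best approximant by a single band-limited de la Vallée Poussin–type convolution and to factor the approximation error through the operator $\varDelta_{1/\sigma}^m(-\Delta_k)^{r/2}$, so that the whole estimate collapses to one $L^1$-bound for a radial kernel. First I would fix $\eta\in\mathcal S_{\mathrm{rad}}(\R^d)$ with $\eta\equiv1$ on a neighbourhood of $0$ and $\operatorname{supp}\eta\subset B_1$, and set $g_\sigma=f\ast_k\mathcal F_k^{-1}(\eta(\Cdot/\sigma))$. The kernel $\mathcal F_k^{-1}(\eta(\Cdot/\sigma))$ is radial and Schwartz, hence in $L^1_{\mathrm{rad}}$, so $g_\sigma\in L^p$ by Proposition~\ref{prop2.4}; since $\mathcal F_k(g_\sigma)=\eta(\Cdot/\sigma)\mathcal F_k(f)$ is supported in $B_\sigma$, Proposition~\ref{prop3.1} gives $g_\sigma\in B_{p,k}^\sigma$ and therefore $E_\sigma(f)_{p,d\mu_k}\le\|f-g_\sigma\|_{p,d\mu_k}$.

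Next I would factor the error on the transform side. One has $\mathcal F_k(f-g_\sigma)=(1-\eta(\Cdot/\sigma))\mathcal F_k(f)$, and I would write
\[
1-\eta(y/\sigma)=\mu_\sigma(y)\,|y|^{r}j_{\lambda_k,m}(|y|/\sigma),\qquad
\mu_\sigma(y)=\frac{1-\eta(y/\sigma)}{|y|^{r}j_{\lambda_k,m}(|y|/\sigma)}.
\]
By \eqref{transformdifference2} the second factor equals $\mathcal F_k\big(\varDelta_{1/\sigma}^m(-\Delta_k)^{r/2}f\big)$, so with $\Phi_\sigma=\mathcal F_k^{-1}(\mu_\sigma)$ I obtain $f-g_\sigma=\Phi_\sigma\ast_k w$, where $w=\varDelta_{1/\sigma}^m(-\Delta_k)^{r/2}f$. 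Since $f\in W^r_{p,k}$ we have $(-\Delta_k)^{r/2}f\in L^p$ and $\|w\|_{p,d\mu_k}\le\omega_m(1/\sigma,(-\Delta_k)^{r/2}f)_{p,d\mu_k}$. The homogeneity $\mu_\sigma(y)=\sigma^{-r}\mu_1(y/\sigma)$ together with the dilation rule for $\mathcal F_k$ gives $\|\Phi_\sigma\|_{1,d\mu_k}=\sigma^{-r}\|\Phi_1\|_{1,d\mu_k}$, whence, by the convolution bound \eqref{ineq} (reconciled with \eqref{convolution2} through Lemma~\ref{lem3.4}),
\[
\|f-g_\sigma\|_{p,d\mu_k}\le\|\Phi_\sigma\|_{1,d\mu_k}\,\|w\|_{p,d\mu_k}\lesssim\sigma^{-r}\,\omega_m\big(1/\sigma,(-\Delta_k)^{r/2}f\big)_{p,d\mu_k}.
\]

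The whole argument thus rests on the single claim $\Phi_1=\mathcal F_k^{-1}(\mu_1)\in L^1(\R^d,d\mu_k)$, and this is the hard part. The radial profile of $\mu_1$ is $C^\infty$ on all of $\R^d$: the factor $1-\eta$ kills the singularities of $|u|^{-r}$ and of $j_{\lambda_k,m}^{-1}$ at the origin, while for $|u|\ge\tfrac12$ the inequality $1-j_{\lambda_k}(|u|)\ge c>0$ makes $(1-j_{\lambda_k})^{-m/2}$ smooth, and for $|u|\ge1$ one simply has $\mu_1(u)=|u|^{-r}(1-j_{\lambda_k}(|u|))^{-m/2}$. Decay of $\Phi_1$ at infinity I would extract from smoothness: on radial functions $-\Delta_k$ acts as the local Bessel operator $-\big(\tfrac{d^2}{ds^2}+\tfrac{2\lambda_k+1}{s}\tfrac{d}{ds}\big)$, so $(-\Delta_k)^N\mu_1$ vanishes near $0$ and is $O(|u|^{-r-2N})$ at infinity, hence lies in $L^1(d\mu_k)$ for $N$ large; since $|x|^{2N}\Phi_1=\mathcal F_k^{-1}\big((-\Delta_k)^N\mu_1\big)\in L^\infty$, we get $|\Phi_1(x)|\lesssim|x|^{-2N}$, which is $d\mu_k$-integrable at infinity. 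Near the origin the behaviour of $\Phi_1$ is dictated by the tail $\mu_1(u)\approx|u|^{-r}$; a Bochner–Riesz computation in the spirit of Lemma~\ref{lem3.3} (expanding $|u|^{-r}$ in negative powers of $1+|u|^2$ and using the integrability of $\mathcal H_{\lambda_k}((1+(\Cdot)^2)^{-\alpha})$) yields $|\Phi_1(x)|\lesssim|x|^{-(d_k-r)}$, integrable against $d\mu_k$ near $0$ precisely because $r>0$. Combining the two regimes gives $\Phi_1\in L^1(d\mu_k)$.

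The borderline $r=0$ must be treated separately, since then $\mu_1\to1$ at infinity and $\Phi_1$ is no longer a function. There I would split $\mu_1=(1-\eta)+(1-\eta)\big((1-j_{\lambda_k})^{-m/2}-1\big)$: the first term contributes $\delta_0$ minus a Schwartz kernel under $\mathcal F_k^{-1}$, and the second, decaying like $j_{\lambda_k}(|u|)$, has an $L^1$ inverse transform by the argument above; thus convolution with $\mathcal F_k^{-1}(\mu_\sigma)$ is $L^p$-bounded with $\sigma$-independent norm and we recover the classical Jackson inequality. I expect the genuine difficulty to be concentrated in the near-origin/infinity control of $\Phi_1$ for $0<r\le d_k$ — equivalently, in showing that the singular radial symbol $\mu_1$ is an $L^p$ Dunkl multiplier with the asserted $\sigma^{-r}$ scaling — which is exactly where the Lizorkin-space and singular-multiplier machinery of Section~3 does the real work.
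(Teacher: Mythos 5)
Your overall architecture --- the de la Vall\'ee Poussin approximant $g_\sigma$, the factorization $\mathcal F_k(f-g_\sigma)=\mu_\sigma\cdot\mathcal F_k\bigl(\varDelta_{1/\sigma}^m(-\Delta_k)^{r/2}f\bigr)$, the scaling $\|\Phi_\sigma\|_{1,d\mu_k}=\sigma^{-r}\|\Phi_1\|_{1,d\mu_k}$, and the reduction to the single claim $\Phi_1\in L^1(\R^d,d\mu_k)$ with a separate operator-bound treatment at $r=0$ --- coincides with the paper's proof (Lemmas~\ref{lem4.5} and~\ref{lem4.6}; your $\mu_1$ is the paper's $\varphi$ up to rescaling the cutoff). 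But your proof of that single claim, which you correctly identify as the hard part, breaks down. You assert that for large $N$ one has $(-\Delta_k)^N\mu_1=O(|u|^{-r-2N})$ at infinity, hence $(-\Delta_k)^N\mu_1\in L^1(d\mu_k)$ and $|\Phi_1(x)|\lesssim|x|^{-2N}$. This is false: for $|u|\ge1$ one has $\mu_1(u)=|u|^{-r}\bigl(1-j_{\lambda_k}(|u|)\bigr)^{-m/2}$, and derivatives of $j_{\lambda_k}$ do \emph{not} decay faster than $j_{\lambda_k}$ itself --- by \eqref{besselestimate2} and the oscillatory asymptotics of Bessel functions, every $j_{\lambda_k}^{(n)}(t)$ is of order $t^{-(\lambda_k+1/2)}$ and no smaller. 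Consequently $B_{\lambda_k}^N\mu_1$ contains terms such as
\[
|u|^{-r}\,\tfrac m2\,\bigl(1-j_{\lambda_k}(|u|)\bigr)^{-m/2-1}j_{\lambda_k}^{(2N)}(|u|)=O\bigl(|u|^{-r-\lambda_k-1/2}\bigr),
\]
and this order does not improve as $N$ grows. Such decay is $d\mu_k$-integrable at infinity only when $r>\lambda_k+3/2$, so in the interesting range of small $r$ your smoothness-to-decay argument yields neither $(-\Delta_k)^N\mu_1\in L^1(d\mu_k)$ nor the bound $|\Phi_1(x)|\lesssim|x|^{-2N}$. The same defect undoes your $r=0$ paragraph: the piece $(1-\eta)\bigl((1-j_{\lambda_k})^{-m/2}-1\bigr)$ and all of its derivatives decay only like $|u|^{-(\lambda_k+1/2)}$, and its inverse transform cannot be an $L^1$ kernel ``by the argument above'': its low-order terms $(1-\eta)\,a_l\,j_{\lambda_k}^l$ transform into $(T^1)^l$ applied to $\delta_0$ minus a Schwartz kernel, i.e.\ into singular spherical-mean-type measures, not $L^1(d\mu_k)$ functions.

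This is precisely the obstruction that the paper's Lemma~\ref{lem4.6} is designed to circumvent, and the missing idea is a splitting by \emph{powers of} $j_{\lambda_k}$ rather than by regions of space. Expanding $(1-j_{\lambda_k})^{-m/2}=\sum_l a_l\,j_{\lambda_k}^l$ as in \eqref{expansion}, the paper writes $\varphi=\varphi_1+\varphi_2$, where $\varphi_1$ carries the tail $\sum_{l\ge N}a_l\,j_{\lambda_k}^l$ and $\varphi_2$ the partial sum. For the tail, differentiation \emph{does} produce decay, but only because the high powers $j_{\lambda_k}^l$ themselves decay like $t^{-l(\lambda_k+1/2)}$: this is the content of \eqref{besselestimate1}, and it gives $\Delta_k^s\varphi_1\in L^1(d\mu_k)$ and then $|\mathcal F_k(\varphi_1)(x)|\lesssim|x|^{-2s}$ --- so your argument is valid for $\varphi_1$, but only for $\varphi_1$. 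The partial sum $\varphi_2$ is not handled by kernel decay at all; instead the paper uses the multiplier identity $\mathcal F_k\bigl(j_{\lambda_k}^l(|\Cdot|)\eta_r\bigr)=(T^1)^l\mathcal F_k(\eta_r)$, the $L^1$-contractivity of $T^1$ (Proposition~\ref{prop2.3}), and the nontrivial fact $\mathcal F_k(\eta_r)\in L^1(\R^d,d\mu_k)$ (Lemma~\ref{lem4.3}, imported from \cite{iv3}); for $r=0$, where even this fails, it passes to operator bounds via $A_2g=Bg-P_{\sigma/2}(Bg)$ and Lemma~\ref{lem4.5}. Without this decomposition, or some substitute for it, your single $L^1$ claim remains unproved, and the rest of your otherwise correct reduction has nothing to stand on.
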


\begin{remark}\label{rem4.1}
From the proof of Theorem
\ref{thm4.1} we will see that inequality \eqref{Jackson} can be equivalently written as
\[
E_{\sigma}(f)_{p,d\mu_k}
\lesssim
\frac{1}{\sigma^{r}}
\|\varDelta_{1/\sigma}^m ((-\Delta_k)^{r/2}f) \|_{p,d\mu_k},
\]
\end{remark}

The next theorem provides an equivalence between modulus of smoothness and the $K$-functional.
\begin{theorem}\label{thm4.2}
If $\delta,r> 0$, $1\leq p\leq \infty$, then for any $f\in L^{p}(\mathbb{R}^{d},d\mu_{k})$
\begin{equation}\label{K-equiv}
K_{r}(\delta, f)_{p,d\mu_k}\asymp \omega_r(\delta, f)_{p,d\mu_k}\asymp
\|\varDelta_\delta^r f \|_{p,d\mu_k}.
\end{equation}
\end{theorem}

We follow the proofs in \cite{iv3}. We will treat functions from $L^{p}(\mathbb{R}^{d},d\mu_{k})$
as distributions from $\Phi_k'$ and will use material of section~3.
Although the elements of $\Phi_k'$ are equivalence classes,
in each equivalence class there is only one element from
$L^{p}(\mathbb{R}^{d},d\mu_{k})$.

\subsection{Properties of the de la Vall\'{e}e Poussin type operators }

Let $\eta\in \mathcal{S}(\mathbb{R}^d)$ be such that $\eta(x)=1$ if $|x|\leq 1$,
$\eta(x)>0$ if $|x|<2$, and $\eta(x)=0$ if $|x|\geq 2$. Denote
\[
\eta_r(x)=\frac{1-\eta(x)}{|x|^{r}},\quad \widehat{\eta}_{k,r}(y)=\mathcal{F}_{k}(\eta_r)(y).
\]
We have $\eta_r\in C^{\infty}_{\Pi}(\mathbb{R}^d)$,
$\eta_r\in\Psi_k$, $\widehat{\eta}_{k,r}\in N_k$.
If $t=|x|$, $\eta_0(t)=\eta(x)$, and $\eta_{r 0}(t)=\eta_r(x)$, then by \eqref{restriction}
$\mathcal{F}_{k}(\eta_r)(y)=\mathcal{H}_{\lambda_k}(\eta_{r 0})(|y|)$.

\begin{lemma}[\cite{iv3}]\label{lem4.3}
We have $\widehat{\eta}_{k,r}\in L^{1}(\mathbb{R}^d, d\mu_k)$, where $r>0$.
\end{lemma}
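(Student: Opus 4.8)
The plan is to exploit that, away from the origin, $\eta_r$ is a \emph{homogeneous} function of degree $-r$ smoothly cut off near zero, and to reduce the claim to the scaling behaviour of the Dunkl transform together with the dilation homogeneity of the measure $d\mu_k$. Concretely, I would decompose $\eta_r$ into dyadically rescaled copies of one fixed compactly supported bump, bound the $L^1$-norm of the transform of each copy by scaling, and sum a geometric series whose convergence is exactly where the hypothesis $r>0$ enters.

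First I would record the dilation identities that drive everything. Since $v_k$ is homogeneous of degree $d_k-d$ and $e_k(ax,y)=e_k(x,ay)$, one has $d\mu_k(ax)=a^{d_k}\,d\mu_k(x)$, and hence for every $a>0$
\begin{equation*}
\mathcal{F}_k\bigl(f(a\Cdot)\bigr)(y)=a^{-d_k}\mathcal{F}_k(f)(y/a),\qquad \|\mathcal{F}_k(f(a\Cdot))\|_{1,d\mu_k}=\|\mathcal{F}_k(f)\|_{1,d\mu_k}.
\end{equation*}
Next I would build the decomposition by telescoping the cutoff: since $\eta\equiv1$ near $0$,
\begin{equation*}
1-\eta(x)=\sum_{j\ge0}\bigl(\eta(2^{-j-1}x)-\eta(2^{-j}x)\bigr)=\sum_{j\ge0}\psi_0(2^{-j}x),\qquad \psi_0(x):=\eta(x/2)-\eta(x),
\end{equation*}
where $\psi_0\in C^\infty$ is supported in the annulus $\{1\le|x|\le4\}$. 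Setting $\Phi(x):=\psi_0(x)\,|x|^{-r}\in C^\infty_c(\mathbb{R}^d)$ (smooth and compactly supported, as $\psi_0$ vanishes near the origin) and using homogeneity of $|x|^{-r}$, this yields the clean identity
\begin{equation*}
\eta_r(x)=\frac{1-\eta(x)}{|x|^{r}}=\sum_{j\ge0}2^{-jr}\,\Phi(2^{-j}x).
\end{equation*}

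Then I would apply the transform termwise. By the scaling identity $\mathcal{F}_k\bigl(2^{-jr}\Phi(2^{-j}\Cdot)\bigr)(y)=2^{-jr}2^{jd_k}\mathcal{F}_k(\Phi)(2^jy)$, whence $\|\mathcal{F}_k(2^{-jr}\Phi(2^{-j}\Cdot))\|_{1,d\mu_k}=2^{-jr}\|\mathcal{F}_k(\Phi)\|_{1,d\mu_k}$ by the dilation invariance above. Because $\Phi\in C^\infty_c\subset\mathcal{S}(\mathbb{R}^d)$, Proposition~\ref{prop2.1}(3) gives $\mathcal{F}_k(\Phi)\in\mathcal{S}(\mathbb{R}^d)\subset L^1(\mathbb{R}^d,d\mu_k)$, so summing the geometric series yields
\begin{equation*}
\|\widehat{\eta}_{k,r}\|_{1,d\mu_k}\le\sum_{j\ge0}2^{-jr}\,\|\mathcal{F}_k(\Phi)\|_{1,d\mu_k}=\frac{\|\mathcal{F}_k(\Phi)\|_{1,d\mu_k}}{1-2^{-r}}<\infty,
\end{equation*}
which is finite precisely because $r>0$.

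The one genuine point to be careful about — the main obstacle — is the interchange of $\mathcal{F}_k$ with the infinite sum and the identification of the resulting $L^1$-limit with $\widehat{\eta}_{k,r}=\mathcal{F}_k(\eta_r)$, since $\eta_r$ is not Schwartz (it decays only like $|x|^{-r}$). I would resolve this by observing that the physical-side series is locally finite and uniformly bounded by $C|x|^{-r}\mathbf{1}_{\{|x|\ge1\}}$, so its partial sums converge to $\eta_r$ in $\mathcal{S}'(\mathbb{R}^d)$ by dominated convergence; since $\mathcal{F}_k$ is continuous on $\mathcal{S}'(\mathbb{R}^d)$ and the transformed series converges absolutely in $L^1(\mathbb{R}^d,d\mu_k)$ (hence in $\mathcal{S}'$), the two limits coincide and equal the $L^1$-function bounded above. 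An alternative route, avoiding the series, passes through the radial reduction $\widehat{\eta}_{k,r}(y)=\mathcal{H}_{\lambda_k}(\eta_{r0})(|y|)$ of \eqref{restriction}: smoothness of $\eta_{r0}$ with integration by parts controls $\widehat{\eta}_{k,r}$ for large $|y|$, while the homogeneity $\eta_{r0}(t)\sim t^{-r}$ forces $\widehat{\eta}_{k,r}(y)\sim c\,|y|^{-(d_k-r)}$ near $0$, which is $d\mu_k$-integrable over $\{|y|\le1\}$ again exactly when $r>0$; but the dyadic argument is shorter and I would adopt it.
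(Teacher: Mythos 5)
Your proof is correct, so let me place it against the paper. The paper itself offers no argument for Lemma~\ref{lem4.3}: the result is imported verbatim from \cite{iv3}, and the only hint of the intended method is the sentence immediately preceding the lemma, which records the radial reduction $\widehat{\eta}_{k,r}(y)=\mathcal{H}_{\lambda_k}(\eta_{r0})(|y|)$ via \eqref{restriction}. That is precisely the ``alternative route'' you sketch at the end: control $\mathcal{H}_{\lambda_k}(\eta_{r0})$ at infinity by smoothness of $\eta_{r0}$ (integration by parts against Bessel functions, the same device the paper uses later when proving continuity of $n(\theta)$ in Theorem~\ref{thm5.2}) and near the origin by the $|y|^{r-d_k}$ singularity inherited from the homogeneity of $t^{-r}$, whose integrability against $d\nu_{\lambda_k}$ is exactly the condition $r>0$. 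Your dyadic argument is a genuinely different and self-contained alternative, and its ingredients all check out: the telescoping identity $\eta_r=\sum_{j\ge0}2^{-jr}\Phi(2^{-j}\Cdot)$ with $\Phi\in C_c^{\infty}$ supported in an annulus; the dilation covariance $\mathcal{F}_k\bigl(f(a\Cdot)\bigr)(y)=a^{-d_k}\mathcal{F}_k(f)(y/a)$, which indeed follows from $e_k(ax,y)=e_k(x,ay)$ and the $d_k$-homogeneity of $d\mu_k$, so that $\|\mathcal{F}_k(f(a\Cdot))\|_{1,d\mu_k}=\|\mathcal{F}_k(f)\|_{1,d\mu_k}$; and Schwartz invariance of $\mathcal{F}_k$ (Proposition~\ref{prop2.1}) to make each term $L^1$. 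You also correctly identified and closed the one real gap, the interchange of $\mathcal{F}_k$ with the sum: partial sums converge to $\eta_r$ in the weighted $\mathcal{S}'$ by domination with $|x|^{-r}\mathbf{1}_{\{|x|\ge1\}}$, the transformed series converges absolutely in $L^1(\mathbb{R}^d,d\mu_k)$ hence in $\mathcal{S}'$, and the two limits must agree. As for what each approach buys: the Hankel/asymptotic route of \cite{iv3} yields precise pointwise behaviour of $\widehat{\eta}_{k,r}$ (of independent use elsewhere), while your route avoids all Bessel asymptotics, works uniformly in $r>0$, and makes the role of the hypothesis $r>0$ transparent as the convergence of a geometric series.
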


For $m, r>0$ and $m\geq r$, we set
\[
g_{m, r}^*(y):= |y|^{-r}j_{\lambda_{k},m}(|y|),\quad
g_{m,r}(x):=\mathcal{F}_{k}(g_{m, r}^*)(x),
\]
\[
g_{m,r}^t(x) :=t^{r-2\lambda_k-2}g_{m,r}\Bigl(\frac{x}{t}\Bigr).
\]
Since
\[
g_{m,r}^*(y)=j_{\lambda_{k},m}(|y|)\eta_r(y)+|y|^{m-r}\frac{j_{\lambda_{k},m}(|y|)}{|y|^{m}}\,\eta(y),
\]
\[
\frac{j_{\lambda_{k},m}(|y|)}{|y|^{m}}\in
C_{\Pi}^{\infty}(\mathbb{R}^d),\quad\frac{j_{\lambda_{k},m}(|y|)}{|y|^{m}}\,\eta(y)\in
\mathcal{S}(\mathbb{R}^d),
\]
and by \eqref{distributiontransform}
\[
\mathcal{F}_{k}(j_{\lambda_{k},m}\eta_r)(x)=\sum_{s=0}^{\infty}(-1)^s\binom{m}{s}(T^{1})^s\widehat{\eta}_{\lambda_k,r}(x),
\]
then boundedness of the operator $T^1$ in $L^{1}(\mathbb{R}^d,d\mu_k)$ and Lemmas
\ref{lem3.3}, \ref{lem4.3} imply that
\begin{equation}\label{kernel}
\begin{gathered}
g_{m,r},\,g_{m,r}^t\in L^{1}(\mathbb{R}^d,d\mu_{k}),\quad \|g_{m,r}^t\|_{1,d\mu_{k}}=t^{r}\|g_{m,r}\|_{1,d\mu_{k}},
\\
\mathcal{F}_{k}^{-1}(g_{m,r}^t)(y)=
\mathcal{F}_{k}(g_{m,r}^t)(y)=t^{r}g_{m,r}^*(ty)=|y|^{-r}j_{\lambda_{k},m}(t|y|).
\end{gathered}
\end{equation}

\begin{lemma}\label{lem4.4}
Let $m,r>0$, $m \geq r$, $1\leq p\leq\infty$, and $f\in
W_{p, k}^{r}$. We have
\begin{equation}\label{differencerepres}
\Delta_t^mf = ((-\Delta_k)^rf\Ast{k}g_{m,r}^t)
\end{equation}
and
\begin{equation}\label{normdifferencerepres}
\|\Delta_t^mf\|_{p,d\mu_k}\lesssim t^{r}\|(-\Delta_k)^{r/2}f\|_{p,d\mu_k}.
\end{equation}
\end{lemma}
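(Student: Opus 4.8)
The plan is to establish the distributional identity \eqref{differencerepres} by comparing Dunkl transforms in $\Psi_k'$, and then deduce the norm bound \eqref{normdifferencerepres} from the convolution inequality \eqref{ineq}. First I would verify that the convolution on the right-hand side of \eqref{differencerepres} is well-defined. Since $f\in W_{p,k}^r$, we have $(-\Delta_k)^{r/2}f\in L^p(\mathbb{R}^d,d\mu_k)$, and by \eqref{kernel} the kernel $g_{m,r}^t$ lies in $L^1_{\mathrm{rad}}(\mathbb{R}^d,d\mu_k)$ with $\mathcal{F}_k(g_{m,r}^t)=|\Cdot|^{-r}j_{\lambda_k,m}(t|\Cdot|)\in N_k$. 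Thus the hypotheses of Lemma~\ref{lem3.4} are met (with $(-\Delta_k)^{r/2}f$ playing the role of the $L^p$-function), so both convolutions coincide and the product formula \eqref{transformconvolution2} applies.

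The core computation is then a transform-side comparison. Applying \eqref{transformconvolution2} together with the multiplier description \eqref{distributiontransform} for $(-\Delta_k)^{r/2}$ and the value of $\mathcal{F}_k(g_{m,r}^t)$ from \eqref{kernel}, I compute
\[
\mathcal{F}_k\bigl((-\Delta_k)^{r/2}f\Ast{k}g_{m,r}^t\bigr)
=|\Cdot|^{r}\mathcal{F}_k(f)\cdot|\Cdot|^{-r}j_{\lambda_k,m}(t|\Cdot|)
=j_{\lambda_k,m}(t|\Cdot|)\mathcal{F}_k(f).
\]
By \eqref{transformdifference1} the right-hand side equals $\mathcal{F}_k(\Delta_t^m f)$, and since $f=g$ in $\Phi_k'$ iff $\mathcal{F}_k(f)=\mathcal{F}_k(g)$ in $\Psi_k'$, this yields \eqref{differencerepres}. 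Here the factors $|\Cdot|^{\pm r}$ cancel cleanly at the level of multipliers on $\Psi_k$, where $|\Cdot|^{-r}$ is a legitimate element of $C^{\infty}_{\Pi}(\mathbb{R}^d\setminus\{0\})$ by Lemma~\ref{lem3.1}, so no singularity at the origin causes trouble—this is precisely the payoff of working in the Lizorkin framework.

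For the norm estimate, I apply the convolution inequality \eqref{ineq} from Proposition~\ref{prop2.4} to \eqref{differencerepres}, giving
\[
\|\Delta_t^m f\|_{p,d\mu_k}
\leq \|(-\Delta_k)^{r/2}f\|_{p,d\mu_k}\,\|g_{m,r}^t\|_{1,d\mu_k}
= t^{r}\|g_{m,r}\|_{1,d\mu_k}\,\|(-\Delta_k)^{r/2}f\|_{p,d\mu_k},
\]
where the scaling $\|g_{m,r}^t\|_{1,d\mu_k}=t^r\|g_{m,r}\|_{1,d\mu_k}$ is recorded in \eqref{kernel}. Since $\|g_{m,r}\|_{1,d\mu_k}<\infty$ is a fixed constant, this is exactly \eqref{normdifferencerepres}. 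The main obstacle, which has in fact already been discharged by the earlier lemmas, is establishing $g_{m,r}\in L^1(\mathbb{R}^d,d\mu_k)$: the multiplier $g_{m,r}^*(y)=|y|^{-r}j_{\lambda_k,m}(|y|)$ is delicate at the origin, but the decomposition preceding \eqref{kernel}—splitting off the smooth Schwartz part via $\eta$ and handling the remaining piece through Lemma~\ref{lem4.3} and the $L^1$-boundedness of $T^1$—secures integrability. I would cite \eqref{kernel} directly rather than reprove it, so the proof of Lemma~\ref{lem4.4} reduces to the two short transform and norm computations above.
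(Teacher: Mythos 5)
Your proof is correct and follows essentially the same route as the paper's: the identical transform-side computation identifying $\mathcal{F}_k(\Delta_t^m f)$ with $|\Cdot|^{r}\mathcal{F}_k(f)\cdot|\Cdot|^{-r}j_{\lambda_k,m}(t|\Cdot|)$ via \eqref{distributiontransform}, \eqref{transformconvolution2}, \eqref{transformdifference1}, and \eqref{kernel}, followed by Lemma~\ref{lem3.4} and Proposition~\ref{prop2.4} to pass from the convolution representation to the norm bound. Note that your formulation with $(-\Delta_k)^{r/2}f$ in the convolution is the consistent one: the paper's display \eqref{differencerepres} writes $(-\Delta_k)^{r}f$, but since \eqref{distributiontransform} gives $\mathcal{F}_k((-\Delta_k)^{r}f)=|\Cdot|^{2r}\mathcal{F}_k(f)$, that is a notational slip, and both the paper's own multiplier computation and the inequality \eqref{normdifferencerepres} require the exponent $r/2$ exactly as you have it.
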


\begin{proof}
Applying \eqref{distributiontransform}, \eqref{transformconvolution2}, \eqref{transformdifference1},
and \eqref{kernel}, we obtain that for $f\in\Phi_k'$,
\begin{align*}
\mathcal{F}_k(\Delta_t^mf)(y)&=j_{\lambda_{k},m}(t|y|)\mathcal{F}_k(f)(y)
=|y|^{r}\mathcal{F}_k(f)(y)\frac{j_{\lambda_{k},m}(t|y|)}{|y|^{r}}\\
&=\mathcal{F}_k((-\Delta_k)^rf)(y)
\mathcal{F}_k(g_{m,r}^t)(y)=\mathcal{F}_k((-\Delta_k)^rf\Ast{k}g_{m,r}^t),
\end{align*}
and the equality \eqref{differencerepres} is fulfilled. If $f\in W_{p, k}^{r}$,
then $(-\Delta_k)^{r/2}f\in L^{p}(\mathbb{R}^{d},d\mu_{k})$ and
the inequality \eqref{normdifferencerepres} follows from \eqref{kernel},
\eqref{differencerepres}, Lemma~\ref{lem3.4} and Proposition \ref{prop2.4}.
Lemma~\ref{lem4.4} is proved.
\end{proof}

Let $f\in L^{p}(\mathbb{R}^d,d\mu_k)$. We set $\theta(x)=\mathcal{F}_k(\eta)(x)$ and
$\theta_\sigma(x) = \theta(x/\sigma)$. Then $\theta$, $\theta_\sigma\in
\mathcal{S}(\mathbb{R}^d)$.
The de la Vall\'{e}e Poussin type operator is given by $P_\sigma (f)= (f\Ast{k}\theta_\sigma)$.
By \eqref{transformconvolution2},
\[
\mathcal{F}_k(P_\sigma(f))(y)=\eta(y/\sigma)\mathcal{F}_k (f)(y).
\]

\begin{lemma}[\cite{iv3}]\label{lem4.5}
If $\sigma >0$, $1 \leq p\leq\infty$, $f\in L^{p}(\mathbb{R}^d,d\mu_k)$, then

\smallbreak
\textup{(1)} $P_\sigma (f)\in B_{p, k}^{2\sigma}$ and $P_\sigma(g)=g$ for any $g\in B_{p, k}^\sigma$;

\smallbreak
\textup{(2)} $\|P_\sigma (f)\|_{p,d\mu_{k}} \lesssim \|f\|_{p,d\mu_{k}}$;

\smallbreak
\textup{(3)} $\|f-P_\sigma(f)\|_{p,d\mu_{k}} \lesssim E_{\sigma} (f)_{p,d\mu_{k}}$.
\end{lemma}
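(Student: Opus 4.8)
The plan is to extract all three assertions from the multiplier identity $\mathcal{F}_k(P_\sigma(f))(y)=\eta(y/\sigma)\mathcal{F}_k(f)(y)$ proved just above, combining it with the convolution bound \eqref{ineq}, the Schwartz-invariance of $\mathcal{F}_k$ (Proposition~\ref{prop2.1}(3)), and the Paley--Wiener characterization of Proposition~\ref{prop3.1}.

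For (1) I would first note that $\eta(y/\sigma)=0$ whenever $|y|\ge 2\sigma$, so the smooth compactly supported factor $\eta(\Cdot/\sigma)\in\mathcal{S}(\mathbb{R}^d)$ is supported in $B_{2\sigma}$; hence $\supp\mathcal{F}_k(P_\sigma(f))\subset B_{2\sigma}$. To invoke Proposition~\ref{prop3.1} I still need $P_\sigma(f)\in L^p(\mathbb{R}^d,d\mu_k)\cap C_b(\mathbb{R}^d)$. The $L^p$-membership is part (2) below; boundedness and continuity come from the representation $P_\sigma(f)(x)=(f\Ast{k}\theta_\sigma)(x)=\int_{\mathbb{R}^d}f(y)\tau^x\theta_\sigma(-y)\,d\mu_k(y)$, estimated by Hölder's inequality together with the fact that $\theta_\sigma$ is radial Schwartz and that $\tau^x$ is an $L^{p'}$-contraction on radial functions (Proposition~\ref{prop2.2}(3)), continuity following from the regularity of the kernel and dominated convergence. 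This gives $P_\sigma(f)\in B_{p,k}^{2\sigma}$. For the reproducing property, if $g\in B_{p,k}^\sigma$ then $\supp\mathcal{F}_k(g)\subset B_\sigma$ by Proposition~\ref{prop3.1}, and $\eta(y/\sigma)=1$ on $B_\sigma$; therefore $\mathcal{F}_k(P_\sigma(g))=\eta(\Cdot/\sigma)\mathcal{F}_k(g)=\mathcal{F}_k(g)$ in $\Psi_k'$, and injectivity of the Dunkl transform on $\Phi_k'$ yields $P_\sigma(g)=g$.

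Part (2) is immediate from the convolution inequality \eqref{ineq}: $\|P_\sigma(f)\|_{p,d\mu_k}\le\|f\|_{p,d\mu_k}\,\|\theta_\sigma\|_{1,d\mu_k}$, so it only remains to check that $\|\theta_\sigma\|_{1,d\mu_k}$ is a finite constant independent of $\sigma$. Since $\eta\in\mathcal{S}(\mathbb{R}^d)$ and $\mathcal{F}_k$ preserves $\mathcal{S}(\mathbb{R}^d)$ (Proposition~\ref{prop2.1}(3)), we have $\theta=\mathcal{F}_k(\eta)\in\mathcal{S}_{\mathrm{rad}}(\mathbb{R}^d)\subset L^1(\mathbb{R}^d,d\mu_k)$; the homogeneity of degree $d_k$ of the measure $d\mu_k$ then makes the $L^1(d\mu_k)$-norm of the dilate $\theta_\sigma$ independent of $\sigma$ (indeed equal to $\|\theta\|_{1,d\mu_k}$), which is exactly the normalization forced by the multiplier identity.

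Finally, (3) is the standard near-best-approximation argument built on (1) and (2). For arbitrary $g\in B_{p,k}^\sigma$, linearity of $P_\sigma$ and the reproducing property $P_\sigma(g)=g$ give $f-P_\sigma(f)=(f-g)-P_\sigma(f-g)$, whence by (2) one gets $\|f-P_\sigma(f)\|_{p,d\mu_k}\le(1+C)\|f-g\|_{p,d\mu_k}$; taking the infimum over $g\in B_{p,k}^\sigma$ yields $\|f-P_\sigma(f)\|_{p,d\mu_k}\lesssim E_\sigma(f)_{p,d\mu_k}$. The one genuinely delicate step is promoting $P_\sigma(f)$ from an $L^p$-function to an element of $C_b(\mathbb{R}^d)$ in part (1), since Proposition~\ref{prop2.4} by itself only furnishes $L^p$-membership; this is precisely where the radial-Schwartz nature of the kernel $\theta_\sigma$ and the $L^{p'}$-contractivity of $\tau^x$ on radial functions are essential.
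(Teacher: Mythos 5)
Your proposal is correct and is essentially the intended argument: the paper itself gives no proof of Lemma~\ref{lem4.5} (it is quoted from \cite{iv3}), and the route you take --- the multiplier identity combined with Young's inequality \eqref{ineq} for (2), the Paley--Wiener characterization of Proposition~\ref{prop3.1} together with the support of $\eta(\cdot/\sigma)$ for (1), and the identity $f-P_\sigma(f)=(f-g)-P_\sigma(f-g)$ with $g$ a (near-)best approximant for (3) --- is the standard one used there. Your remark about normalization is also on point: for the constant in (2) to be independent of $\sigma$ one must take the $L^1(d\mu_k)$-normalized dilate $\sigma^{d_k}\theta(\sigma\,\cdot)$, i.e.\ the one actually forced by $\mathcal{F}_k(P_\sigma(f))=\eta(\cdot/\sigma)\mathcal{F}_k(f)$, since the literal dilate $\theta(\cdot/\sigma)$ as written in the paper would give $\|\theta(\cdot/\sigma)\|_{1,d\mu_k}=\sigma^{d_k}\|\theta\|_{1,d\mu_k}$.
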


Let $\lambda\geq-1/2$, $t\in\mathbb{R}_+$, $r\geq 0$,
\[
B_{\lambda}f(t)=f''(t)+\frac{2\lambda+1}{t}f'(t)
\]
be the Bessel differential operator. It is a restriction of $\Delta_k$ on radial functions.

In the proof of the next lemma we will use the estimates
\begin{equation}\label{besselestimate1}
B^s_{\lambda}((\Cdot)^{-r}(j_{\lambda})^l)(t)\lesssim l^{2s}t^{-l(\lambda+1/2)-r},\quad t\geq 1,\,\, r\geq 0,
\,\, s\in\mathbb{N},\,\, l>2s.
\end{equation}
The constant in \eqref{besselestimate1} don't depend from $t$ and $l$.

Let us prove \eqref{besselestimate1}. By induction we derive
\begin{equation}\label{derivative}
\begin{gathered}
B^s_{\lambda}((\Cdot)^{-r}(j_{\lambda})^l)(t)=t^{-2s-r}\sum_{m=1}^{2s}p_m(\lambda,r,s)t^m(j_{\lambda}(t))^l)^{(m)},\\
((j_{\lambda}(t))^l)^{(m)}=\sum_{i=1}^m(j_{\lambda}(t))^{l-i}\sum_{\Omega}
a_{\alpha,\beta}(l)(D^{\alpha}(j_{\lambda}(t)))^{\beta},
\end{gathered}
\end{equation}
where
\[
\alpha,\beta\in\mathbb{Z}_+^m, (D^{\alpha}(j_{\lambda}(t))^{\beta}=\prod_{j=1}^m((j_{\lambda}(t))^{(\alpha_j)})^{\beta_j},
\Omega=\{\alpha,\beta\colon \sum_{j=1}^m\alpha_j\beta_j=m,\sum_{j=1}^m\beta_j=i\},
\]
and $a_{\alpha,\beta}(l)$ are polynomials in $l$ of degree at most $m$. For derivatives of the Bessel
function we have the following estimates
\begin{equation}\label{besselestimate2}
|j^{(n)}_{\lambda}(t)|\lesssim(t+1)^{-(\lambda+1/2)},\quad t\in \mathbb{R}_+,\,\, n\in \mathbb{Z}_+,
\end{equation}
which follow, by induction on $n$, from the known properties of the Bessel function (\cite{BatErd53})
\[
|j_{\lambda}(t)|\lesssim(t+1)^{-(\lambda+1/2)},\quad
j'_{\lambda}(t)=-\frac{t}{2(\lambda+1)}j_{\lambda+1}(t).
\]
Substituting these estimates into \eqref{derivative}, we obtain \eqref{besselestimate1}.

\begin{lemma}\label{lem4.6}
If $\sigma >0$, $1 \leq p\leq\infty$, $m>0$, $r\geq 0$, $f\in W_{p, k}^{r}$, then
\begin{equation*}
\|f-P_{\sigma/2}(f)\|_{p,d\mu_{k}} \lesssim \sigma^{-r}\|\Delta_{1/\sigma}^m((-\Delta_k)^{r/2}f)\|_{p,d\mu_k}.
\end{equation*}
\end{lemma}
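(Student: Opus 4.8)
The plan is to reduce the estimate for the de la Vallée Poussin error $f-P_{\sigma/2}(f)$ to a convolution bound, mimicking the structure of Lemma~\ref{lem4.4}. The key observation is that on the Fourier side
\[
\mathcal{F}_k(f-P_{\sigma/2}(f))(y)=(1-\eta(2y/\sigma))\mathcal{F}_k(f)(y),
\]
and I want to factor out the two ``heavy'' ingredients that we already control: the factor $|y|^r$, which produces $(-\Delta_k)^{r/2}f$, and the factor $j_{\lambda_k,m}(|y|/\sigma)$, which produces the difference $\Delta_{1/\sigma}^m$. Concretely, I would write
\[
1-\eta(2y/\sigma)
=|y|^{r}\,j_{\lambda_k,m}(|y|/\sigma)\cdot
\frac{(1-\eta(2y/\sigma))}{|y|^{r}j_{\lambda_k,m}(|y|/\sigma)},
\]
so that, using \eqref{distributiontransform} and \eqref{transformdifference2},
\[
\mathcal{F}_k(f-P_{\sigma/2}(f))(y)
=\mathcal{F}_k\bigl(\Delta_{1/\sigma}^m((-\Delta_k)^{r/2}f)\bigr)(y)\cdot
m_\sigma(y),
\qquad
m_\sigma(y):=\frac{1-\eta(2y/\sigma)}{|y|^{r}j_{\lambda_k,m}(|y|/\sigma)}.
\]
The point of the cutoff $1-\eta(2y/\sigma)$ is that it vanishes on $|y|\le\sigma/2$, precisely where $j_{\lambda_k,m}(|y|/\sigma)$ degenerates (it has a zero of order $m$ at the origin), so the quotient $m_\sigma$ is smooth and the singularity at $0$ is harmless.

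The heart of the argument is therefore to show that $K_\sigma:=\mathcal{F}_k^{-1}(m_\sigma)$ lies in $L^1_{\mathrm{rad}}(\mathbb{R}^d,d\mu_k)$ with an $L^1$-norm that is uniformly bounded in $\sigma$ (after the natural dilation normalisation). By the restriction formula \eqref{restriction}, $K_\sigma$ is governed by a Hankel transform $\mathcal{H}_{\lambda_k}$ of a radial profile supported away from $0$, and by scaling it suffices to treat $\sigma=1$, i.e. to bound the $L^1(d\nu_{\lambda_k})$-norm of the Hankel transform of
\[
t\longmapsto \frac{1-\eta_0(2t)}{t^{r}j_{\lambda_k,m}(t)}.
\]
Here is exactly where the estimates \eqref{besselestimate1} and \eqref{besselestimate2} for the Bessel operator $B_\lambda$ and the derivatives of $j_\lambda$ enter: applying $B_{\lambda_k}^s$ to this profile and using $(j_{\lambda_k}(t))^{-m/2}\lesssim t^{-m/2}$ for $t\ge 1$ together with the decay in \eqref{besselestimate1}, I obtain that $B_{\lambda_k}^s$ of the profile decays fast enough in $t$ for each fixed large $s$. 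Since multiplication of the Hankel transform by $r^{2s}$ corresponds to applying $B_\lambda^s$ to the function, this polynomial-in-$r$ decay of the transform, combined with its boundedness for small $r$, yields integrability of $K_1$ against $d\nu_{\lambda_k}$.

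Once $\|K_\sigma\|_{1,d\mu_k}\lesssim\sigma^{-r}$ (with the dilation accounting for the factor $\sigma^{-r}$ just as in \eqref{kernel}), I finish by the convolution inequality. Writing the identity above in convolution form,
\[
f-P_{\sigma/2}(f)=\bigl(\Delta_{1/\sigma}^m((-\Delta_k)^{r/2}f)\bigr)\Ast{k}K_\sigma,
\]
which is justified for $f\in W_{p,k}^{r}$ exactly as in Lemma~\ref{lem4.4} via Lemmas~\ref{lem3.2}, \ref{lem3.4} and the fact that $m_\sigma\in N_k$, Proposition~\ref{prop2.4}(2) gives
\[
\|f-P_{\sigma/2}(f)\|_{p,d\mu_k}
\le\|\Delta_{1/\sigma}^m((-\Delta_k)^{r/2}f)\|_{p,d\mu_k}\,\|K_\sigma\|_{1,d\mu_k}
\lesssim\sigma^{-r}\|\Delta_{1/\sigma}^m((-\Delta_k)^{r/2}f)\|_{p,d\mu_k},
\]
which is the claim. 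I expect the main obstacle to be the uniform $L^1$-bound on the kernel $K_\sigma$: one must verify that the multiplier $m_\sigma$ is genuinely smooth despite the $|y|^{-r}j_{\lambda_k,m}^{-1}$ singularity (handled by the support of $1-\eta(2\,\cdot/\sigma)$), and then extract enough decay of its Hankel transform from \eqref{besselestimate1}–\eqref{besselestimate2}, being careful that the constants there are independent of the exponent $l$ so that the series defining $j_{\lambda_k,m}^{-m/2}$ and the Faà~di~Bruno expansion in \eqref{derivative} can be summed.
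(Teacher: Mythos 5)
Your opening move coincides with the paper's: the factorization
\[
\mathcal{F}_k(f-P_{\sigma/2}(f))=\sigma^{-r}\varphi(\Cdot/\sigma)\,
\mathcal{F}_k\bigl(\Delta_{1/\sigma}^m(-\Delta_k)^{r/2}f\bigr),\qquad
\varphi(y)=\frac{1-\eta(2y)}{|y|^{r}j_{\lambda_k,m}(|y|)},
\]
and the reduction, via Lemma~\ref{lem3.4} and Proposition~\ref{prop2.4}, to an $L^1$-bound for the kernel $\mathcal{F}_k(\varphi)$. The gap is in what you yourself call the heart of the argument, the claim $\|K_\sigma\|_{1,d\mu_k}\lesssim\sigma^{-r}$. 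For $r=0$ --- a case the lemma allows, and exactly the case invoked as \eqref{prof2} in the proof of Theorem~\ref{thm4.2} --- this claim is false: the multiplier $m_\sigma(y)=(1-\eta(2y/\sigma))/j_{\lambda_k,m}(|y|/\sigma)$ tends to $1$ as $|y|\to\infty$, so if $K_\sigma$ were an $L^1(\mathbb{R}^d,d\mu_k)$ function, then $m_\sigma=\mathcal{F}_k(K_\sigma)$ would lie in $C_0(\mathbb{R}^d)$ by Proposition~\ref{prop2.1}(1), a contradiction. The paper states this obstruction explicitly (``Unfortunately, $\mathcal{F}_k(\varphi_2(|\Cdot|))\notin L^{1}$'') and for $r=0$ abandons the kernel bound: splitting $\varphi=\varphi_1+\varphi_2$ via the binomial series \eqref{expansion}, it keeps the kernel estimate only for the fast-decaying tail $\varphi_1$ and proves $L^p$-boundedness of the operator attached to the head $\varphi_2$ by writing it as $Bg-P_{\sigma/2}(Bg)$, where $B$ is a finite combination of powers of the translation operator $T$ (bounded on $L^p$ by Proposition~\ref{prop2.3}) and $P_{\sigma/2}$ is bounded by Lemma~\ref{lem4.5}. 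Some operator-level device of this kind is unavoidable; your scheme cannot cover $r=0$.

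Even for $r>0$, the differentiate-and-decay sketch does not yield $K_1\in L^1$. The radial profile $(1-\eta_0(2t))/(t^{r}j_{\lambda_k,m}(t))$ behaves like $t^{-r}$ at infinity, hence is not in $L^1(d\nu_{\lambda_k})$ unless $r>2\lambda_k+2=d_k$; its Hankel transform is therefore \emph{not} bounded near the origin (it has a singularity of order $\rho^{r-d_k}$ there), so your step ``combined with its boundedness for small [radius]'' fails. The estimates \eqref{besselestimate1}--\eqref{besselestimate2} do exactly what the paper uses them for: they control $B_{\lambda_k}^s$ of the series tail $\sum_{l\geq N}a_l j_{\lambda_k}^l$, which decays like $t^{-N(\lambda_k+1/2)}$; note that \eqref{besselestimate1} is even stated only for $l>2s$, i.e.\ it is a tail estimate by design. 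The finitely many head terms $a_l\,j_{\lambda_k}^l\,\eta_r(2\,\Cdot)$, $l<N$ --- in particular $\eta_r(2\,\Cdot)$ itself --- decay too slowly for this method; the paper handles them with the nontrivial imported fact $\widehat{\eta}_{k,r}=\mathcal{F}_k(\eta_r)\in L^1(\mathbb{R}^d,d\mu_k)$ for $r>0$ (Lemma~\ref{lem4.3}, from \cite{iv3}) combined with $L^1$-boundedness of $T^1$, and nothing in your sketch substitutes for that ingredient. (A smaller slip: the inequality ``$(j_{\lambda_k}(t))^{-m/2}\lesssim t^{-m/2}$ for $t\geq1$'' is not meaningful, since $j_{\lambda_k}$ oscillates and vanishes; what is true and needed is $(1-j_{\lambda_k}(t))^{-m/2}=O(1)$ for $t$ bounded away from $0$.)
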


\begin{proof}
Let $f\in\Phi_k'$. Using \eqref{transformdifference2}, we get
\begin{align}
\mathcal{F}_k(f-P_{\sigma/2}(f))&=(1-\eta(2\Cdot/\sigma))\mathcal{F}_k(f)\notag\\
&=\sigma^{-r}\frac{1-\eta(2\Cdot/\sigma)}{(|\Cdot|/\sigma)^{r}
j_{\lambda_{k},m}(|\Cdot|/\sigma)}\mathcal{F}_k(\Delta_{1/\sigma}^m
(-\Delta_k)^{r/2}f)\notag\\ &=\sigma^{-r}\varphi(y/\sigma)
\mathcal{F}_k(\Delta_{1/\sigma}^m (-\Delta_k)^{r/2}
f),\label{transformapproximant}
\end{align}
where
\begin{equation*}
\varphi(y)= \frac{1-\eta(2y)}{|y|^{r} j_{\lambda_{k},m}(|y|)}.
\end{equation*}
Therefore
\[
f-P_{\sigma/2}(f)=\sigma^{-r}(\Delta_{1/\sigma}^m (-\Delta_k)^{r/2}f\Ast{k}\mathcal{F}_k(\varphi(\Cdot/\sigma))).
\]

We have $(-\Delta_k)^{r/2}f,\,\Delta_{1/\sigma}^m (-\Delta_k)^{r/2}f\in L^{p}(\mathbb{R}^d,d\mu_k)$. If
$\mathcal{F}_k(\varphi)\in L^{1}(\mathbb{R}^d,d\mu_k)$, then $\|\mathcal{F}_k(\varphi)\|_{1,d\mu_{k}}=
\|\mathcal{F}_k(\varphi(\Cdot/\sigma))\|_{1,d\mu_{k}}$
and by Lemma~\ref{lem3.4} and Proposition~\ref{prop2.4}
\[
\|f-P_{\sigma/2}(f)\|_{p,d\mu_{k}} \leq \sigma^{-r}\|\mathcal{F}_k(\varphi)\|_{1,d\mu_{k}}
\|\Delta_{1/\sigma}^m((-\Delta_k)^{r/2}f)\|_{p,d\mu_k}.
\]
It remains to prove the inclusion $\mathcal{F}_k(\varphi)\in L^{1}(\mathbb{R}^d,d\mu_k)$. Note that
$\varphi\in C^{\infty}_{\Pi}(\mathbb{R}^d)$, $\varphi\in\mathcal{S}'(\mathbb{R}^d)$
and $\mathcal{F}_k(\varphi)\in N_k$.

Using the expansion
\begin{equation}\label{expansion}
(1-t)^{-m/2}=\sum_{l=0}^{\infty}a_lt^l,\quad a_l=\frac{\Gamma(l+m/2)}{\Gamma(l+1)\Gamma(m/2)}\lesssim (l+1)^{m/2-1},
\end{equation}
we can write the following decomposition
\begin{equation}\label{decomposition}
\varphi(y)=\varphi_1(|y|)+\varphi_2(|y|),
\end{equation}
where
\[
\varphi_1(|y|)=2^{r}\eta_{r}(2y)\bigl((1-j_{\lambda_{k}}(|y|)^{-m/2}-S_N(j_{\lambda_{k}}(|y|))\bigr)\in C^{\infty}_{\Pi}(\mathbb{R}^d)
\]
and
\[
\varphi_2(|y|)=2^{r}\eta_{r}(2y)S_N(j_{\lambda_{k}}(|y|))\in C^{\infty}_{\Pi}(\mathbb{R}^d), \,\,
\eta_{r}(y)=\frac{1-\eta(y)}{|y|^{r}}, \,\,
S_N(t)=\sum_{l=0}^{N-1}a_lt^l.
\]

First, we show that $\mathcal{F}_k(\varphi_1(|\Cdot|))\in L^{1}(\mathbb{R}^d, d\mu_k)$.
Since for a radial function we have $\Delta_k\varphi_1(|y|)=B_{\lambda_k}(\varphi_{1})(|y|)$
then, by \eqref{besselestimate1} and \eqref{expansion}, we obtain
\[
|\Delta_k^s\varphi_1(|y|)|\lesssim \sum_{l=N}^{\infty}l^{2s+m-1} |y|^{-l(\lambda_k+1/2)}\lesssim
 |y|^{-N(\lambda_k+1/2)},\quad |y|\geq 2,\ s\in \Z_+.
\]
Hence, for a fixed $N\geq 2+2/(2\lambda_k+1)$, we have $\Delta_k^s\varphi_1(|y|)\in
L^{1}(\mathbb{R}^d, d\mu_k)$, where $s\in \mathbb{Z}_+$. Applying the equality
$\mathcal{F}_k((-\Delta)^s\varphi_1(|\Cdot|))(x)=|x|^{2s}\mathcal{F}_k(\varphi_1(|\Cdot|))(x)$, we derive that
\[
|\mathcal{F}_k(\varphi_1(|\Cdot|))(x)|\leq\frac{\|(-\Delta_k)^s \varphi_1(|y|)\|_{1,d\mu_k}}{|x|^{2s}}.
\]
Setting $s>\lambda_k+1$ yields $\mathcal{F}_k(\varphi_1(|\Cdot|))\in L^{1}(\mathbb{R}^d, d\mu_k)$.

Second, let us show that $\mathcal{F}_k(\varphi_2(|\Cdot|))\in L^{1}(\mathbb{R}^d, d\mu_k)$ for $r>0$.

Since $\varphi_2(|y|)=2^r\sum_{l=0}^{N-1}a_lj_{\lambda_k}^l(|y|)\eta_r(2y)$ and by \eqref{distributiontransform}
\[
\mathcal{F}_k(\varphi_2(|\Cdot|))(x)=2^r\sum_{l=0}^{N-1}a_l(T^1)^l\mathcal{F}_k(\eta_r(2\Cdot))(x),
\]
then boundedness of the operator $T^1$ in $L^{1}(\mathbb{R}^d,d\mu_k)$ and Lemma
\ref{lem4.3} imply that $\mathcal{F}_k(\varphi_2(|\Cdot|))\in L^{1}(\mathbb{R}^d, d\mu_k)$:
\[
\|\mathcal{F}_k(\varphi_2(|\Cdot|))\|_{1,d\mu_k}\leq 2^r\Bigl(\sum_{l=0}^{N-1}a_l\Bigr)\|\mathcal{F}_k(\eta_r(2\Cdot))\|_{1,d\mu_k}.
\]
For $r>0$ Lemma~\ref{lem4.6} is proved.

Let now $r=0$. Unfortunately, $\mathcal{F}_k(\varphi_2(|\Cdot|))\notin L^{1}(\mathbb{R}^d, d\mu_k)$.
We proceed as follows.  Using decomposition \eqref{decomposition} we define two operators $A_1$ and $A_2$ as follows:
\[
\mathcal{F}_k(A_1g)=\varphi_1(|\Cdot|/\sigma)\mathcal{F}_k(g),\quad
\mathcal{F}_k(A_2g)=\varphi_2(|\Cdot|/\sigma)\mathcal{F}_k(g).
\]
In accordance with \eqref{transformapproximant} it is sufficiently to prove that these operators are bounded in $L^{p}(\mathbb{R}^d, d\mu_k)$.

Since $\mathcal{F}_k(\varphi_1(|\Cdot|))\in L^{1}(\mathbb{R}^d, d\mu_k)$
and $A_1g=(f\Ast{k}\mathcal{F}_k(\varphi_1(|\Cdot|/\sigma)))$, then by Proposition~\ref{prop2.4}
for $g\in L^{p}(\mathbb{R}^d, d\mu_k)$
\[
\|A_1g\|_{p,d\mu_k}\leq
\|\mathcal{F}_k(\varphi_1(|\Cdot|))\|_{1,d\mu_k}\,\|g\|_{p,d\mu_k}\lesssim
\|g\|_{p,d\mu_k}.
\]

If
\[
Bg=\sum_{l=0}^{N-1}a_l(T^1)^lg,
\]
then by \eqref{distributiontransform} we have
\begin{align*}
\mathcal{F}_k(A_2g)&=(1-\eta(2\Cdot/\sigma))S_N(j_{\lambda_{k}}(|\Cdot|)\mathcal{F}_k(g)\\
&=(1-\eta(2y/\sigma))\mathcal{F}_k(Bg)
=\mathcal{F}_k(Bg-P_{\sigma/2}(Bg)),
\end{align*}
and $A_2g=Bg-P_{\sigma/2}(Bg)$. Applying boundedness of the operator $T^1$
in $L^{p}(\mathbb{R}^d,d\mu_k)$, we obtain
\[
\|Bg\|_{p,d\mu_k}\leq \Bigl(\sum_{l=0}^{N-1}a_l\Bigr)\|g\|_{p,d\mu_k},
\]
and by Lemma~\ref{lem4.5}
\[
\|A_2g\|_{p,d\mu_k}\leq \|Bg\|_{p,d\mu_k}+\|P_{\sigma/2}(Bg)\|_{p,d\mu_k}
\lesssim \|g\|_{p,d\mu_k}
\]
For $r=0$ Lemma~\ref{lem4.6} also is proved.
\end{proof}

\begin{lemma}\label{lem4.7}
If $\sigma >0$, $1 \leq p\leq\infty$, $m>0$, $f\in L^{p}(\mathbb{R}^d,d\mu_k)$, then
\begin{equation}\label{normapproximant2}
\|((-\Delta_k)^{m/2}P_\sigma(f)\|_{p,d\mu_k} \lesssim \sigma^{m}
\|\Delta_{1/(2\sigma)}^{m} f\|_{p,d\mu_k}.
\end{equation}
\end{lemma}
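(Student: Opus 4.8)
The plan is to reduce \eqref{normapproximant2} to a convolution estimate with an $L^1$-kernel, just as in Lemmas~\ref{lem4.4} and~\ref{lem4.6}, by comparing the two sides on the Dunkl transform side. For $f\in\Phi_k'$ the relation $\mathcal{F}_k(P_\sigma(f))(y)=\eta(y/\sigma)\mathcal{F}_k(f)(y)$ together with \eqref{distributiontransform} gives $\mathcal{F}_k\bigl((-\Delta_k)^{m/2}P_\sigma(f)\bigr)(y)=|y|^m\eta(y/\sigma)\mathcal{F}_k(f)(y)$, while \eqref{transformdifference1} gives $\mathcal{F}_k(\Delta_{1/(2\sigma)}^m f)(y)=j_{\lambda_k,m}(|y|/(2\sigma))\mathcal{F}_k(f)(y)$. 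Hence I would introduce the multiplier
\[
\Lambda(y)=\frac{|y|^m\eta(y/\sigma)}{j_{\lambda_k,m}(|y|/(2\sigma))}=\sigma^m h^*(y/\sigma),\qquad h^*(u)=\frac{|u|^m\eta(u)}{j_{\lambda_k,m}(|u|/2)},
\]
so that formally $\mathcal{F}_k\bigl((-\Delta_k)^{m/2}P_\sigma(f)\bigr)=\Lambda\cdot\mathcal{F}_k(\Delta_{1/(2\sigma)}^m f)$, and the whole matter is to realize $\Lambda$ as the Dunkl transform of an $L^1$-kernel of the right size.

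The key step is to check that $h^*$ is a smooth, radial, compactly supported function. Its support lies in $\{|u|\le 2\}=\operatorname{supp}\eta$, and on $0<|u|\le 2$ the denominator does not vanish, since $j_{\lambda_k,m}(|u|/2)=(1-j_{\lambda_k}(|u|/2))^{m/2}$ and $j_{\lambda_k}(t)<1$ for $0<t\le 1$. Smoothness at the origin is the only delicate point: from the expansion of $j_{\lambda_k}$ one has $1-j_{\lambda_k}(t)=t^2H(t^2)$ with $H$ entire and $H>0$ on $[0,1]$, whence
\[
h^*(u)=2^m\,\eta(u)\,H(|u|^2/4)^{-m/2},
\]
which is a smooth function of $u$. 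Thus $h^*\in\mathcal{S}_{\mathrm{rad}}(\mathbb{R}^d)$, so $h:=\mathcal{F}_k^{-1}(h^*)\in\mathcal{S}_{\mathrm{rad}}(\mathbb{R}^d)\subset L^1_{\mathrm{rad}}(\mathbb{R}^d,d\mu_k)$ and $\mathcal{F}_k(h)=h^*\in N_k$. Here the compact support of $h^*$ makes the $L^1$-property immediate, in contrast to Lemma~\ref{lem4.6}, where a decomposition and the Bessel estimates \eqref{besselestimate1} were needed.

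Finally I would assemble the estimate. Using the homogeneity $d_k=2\lambda_k+2$ of the Dunkl transform (the dilation rule already exploited for $g_{m,r}^t$ in \eqref{kernel}), the kernel $G_\sigma:=\mathcal{F}_k^{-1}(\Lambda)=\sigma^{m+d_k}h(\sigma\,\Cdot)$ lies in $L^1_{\mathrm{rad}}(\mathbb{R}^d,d\mu_k)$, satisfies $\mathcal{F}_k(G_\sigma)=\Lambda\in N_k$, and has norm $\|G_\sigma\|_{1,d\mu_k}=\sigma^m\|h\|_{1,d\mu_k}$. Since $\Delta_{1/(2\sigma)}^m f\in L^{p}(\mathbb{R}^d,d\mu_k)$ by Proposition~\ref{prop2.3}, the computed identity $\mathcal{F}_k\bigl((-\Delta_k)^{m/2}P_\sigma(f)\bigr)=\mathcal{F}_k(G_\sigma)\,\mathcal{F}_k(\Delta_{1/(2\sigma)}^m f)$ together with the injectivity of $\mathcal{F}_k$ on $\Phi_k'$ gives, via Lemma~\ref{lem3.4},
\[
(-\Delta_k)^{m/2}P_\sigma(f)=\Delta_{1/(2\sigma)}^m f\Ast{k}G_\sigma .
\]
Young's inequality \eqref{ineq} then yields
\[
\|(-\Delta_k)^{m/2}P_\sigma(f)\|_{p,d\mu_k}\le\|G_\sigma\|_{1,d\mu_k}\,\|\Delta_{1/(2\sigma)}^m f\|_{p,d\mu_k}=\sigma^m\|h\|_{1,d\mu_k}\,\|\Delta_{1/(2\sigma)}^m f\|_{p,d\mu_k},
\]
which is \eqref{normapproximant2}. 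The main obstacle is the origin analysis establishing $h^*\in\mathcal{S}$; note that the normalization $1/(2\sigma)$ in the difference is precisely what keeps the Bessel argument $|y|/(2\sigma)$ inside $(0,1]$ on $\operatorname{supp}\eta(\Cdot/\sigma)$, where $1-j_{\lambda_k}$ stays positive.
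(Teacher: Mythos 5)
Your proof is correct and follows essentially the same route as the paper: your multiplier $h^*(u)=|u|^m\eta(u)/j_{\lambda_k,m}(|u|/2)$ is exactly the paper's $\varphi$, and the argument — factor the symbol $|y|^m\eta(y/\sigma)$ through $\mathcal{F}_k(\Delta_{1/(2\sigma)}^m f)$, check the multiplier is Schwartz, then apply Lemma~\ref{lem3.4}, Proposition~\ref{prop2.4}, and dilation invariance of the $L^1$-norm — is identical. Your expansion $1-j_{\lambda_k}(t)=t^2H(t^2)$ merely spells out in more detail the paper's terse claim that $j_{\lambda_k,m}(|y|/2)/|y|^m\in C^\infty(\mathbb{R}^d)$.
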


\begin{proof}
Applying \eqref{distributiontransform}, \eqref{transformconvolution2}, and \eqref{transformdifference1},
we obtain that for $f\in\Phi_k'$,
\begin{align*}
\mathcal{F}_k((-\Delta_k)^{m/2}P_\sigma(f))&=|\Cdot|^{m}\eta(\Cdot/\sigma)\mathcal{F}_k(f)\\
&=\sigma^{m}\varphi(\Cdot/\sigma)j_{\lambda_{k},m}(|\Cdot|/(2\sigma))\mathcal{F}_k(f)\\
&=\sigma^{m}\varphi(\Cdot/\sigma)\mathcal{F}_k(\Delta_{1/(2\sigma)}^{m} f)\\&
=\sigma^{m}\mathcal{F}_k(\Delta_{1/(2\sigma)}^{m}f\Ast{k}\mathcal{F}_k(\varphi(\Cdot/\sigma)))
\end{align*}
where
\[
\varphi(y)=\frac{|y|^{m}\eta(y)}{j_{\lambda_{k},m}(|y|/2)}.
\]
Since $j_{\lambda_{k},m}(|y|/2)/|y|^{m}\in C^{\infty}(\mathbb{R}^d)$, we observe that
$\varphi\in \mathcal{S}(\mathbb{R}^d)$ and $\mathcal{F}_k(\varphi)\in L^{1}(\mathbb{R}^d,
d\mu_k)$. Then estimate \eqref{normapproximant2} follows from condition $f\in L^{p}(\mathbb{R}^d,d\mu_k)$,
Lemma~\ref{lem3.4}, Proposition~\ref{prop2.4}, and
$\|\mathcal{F}_k(\varphi(\Cdot/\sigma))\|_{1,d\mu_k}=\|\mathcal{F}_k(\varphi)\|_{1,d\mu_k}$.
Lemma~\ref{lem4.7} is proved.
\end{proof}

\subsection{Proofs of Theorem \ref{thm4.1} and \ref{thm4.2} }

\begin{proof}[Proof of Theorem \ref{thm4.1}]
Using Lemma~\ref{lem4.6}, we obtain
\begin{align*}
E_\sigma (f)_{p,d\mu_k}&\leq \|f-P_{\sigma/2}(f)\|_{p,d\mu_{k}}\\&\lesssim
\sigma^{-r}\|\Delta_{1/\sigma}^m((-\Delta_k)^{r/2}f)\|_{p,d\mu_k}\\&\lesssim
\sigma^{-r}\,\omega_m\Bigl(\sigma^{-1}, (-\Delta_k)^{r/2}f\Bigr)_{p,d\mu_k}.
\end{align*}
Theorem~\ref{thm4.1} is proved.
\end{proof}

\begin{proof}[Proof of Theorem \ref{thm4.2}]
In connection with \eqref{ineqmodulus} and Lemma~\ref{lem4.4}, observe that, for
$f\in L^{p}(\mathbb{R}^d,d\mu_k)$ and $g\in W_{p, k}^{r}$,
\begin{align*}
\|\Delta_{\delta}^{r}f\|_{p,d\mu_k}&\leq\omega_r(\delta,
f)_{p,d\mu_k} \leq \omega_r(\delta, f-g)_{p,d\mu_k}
+\omega_r(\delta, g)_{p,d\mu_k}\\
&\lesssim (\|f-g\|_{p,d\mu_k}+\delta^{r}\|(-\Delta_k)^{r/2}g\|_{p,d\mu_k}).
\end{align*}
Then
\[
\|\Delta_{\delta}^{r}f\|_{p,d\mu_k}\leq \omega_r(\delta, f)_{p,d\mu_k} \lesssim K_{r}(\delta, f)_{p,d\mu_k}.
\]
Corollary~\ref{cor5.3} below implies $P_\sigma(f)\in W_{p, k}^{r}$, and
\begin{equation}\label{prof1}
K_{r}(\delta, f)_{p,d\mu_k} \leq \|f-P_\sigma (f)\|_{p,d\mu_k}+
\delta^{r}\|(-\Delta_k)^{r/2}P_\sigma(f) \|_{p,d\mu_k}.
\end{equation}
In light of Lemma \ref{lem4.6},
\begin{equation}\label{prof2}
\|f-P_{\sigma}(f)\|_{p,d\mu_{k}} \lesssim \|\Delta_{1/(2\sigma)}^rf\|_{p,d\mu_k}.
\end{equation}
Further, Lemma \ref{lem4.7} yields
\begin{equation}\label{prof3}
\|((-\Delta_k)^{r/2}P_\sigma(f)\|_{p,d\mu_k} \lesssim \sigma^{r}
\|\Delta_{1/(2\sigma)}^{r} f\|_{p,d\mu_k}.
\end{equation}
Setting $\sigma=1/(2\delta)$, from \eqref{prof1}--\eqref{prof3} we arrive at
\[
K_{r}(\delta, f)_{p,d\mu_k}\lesssim \|\Delta_{\delta}^{r}
f\|_{p,d\mu_k}\lesssim \omega_r(\delta, f)_{p,d\mu_k}.
\]
Theorem~\ref{thm4.2} is proved.
\end{proof}

\begin{remark}\label{rem4.3}
Properties \eqref{K-property1} and \eqref{K-property2} of the $K$-functional and the
equivalence \eqref{K-equiv} imply the following properties of the fractional modulus of
smoothness:

\medbreak
\textup{(1)} $\lim_{\delta\to 0+0}\omega_m(\delta,
f)_{p,d\mu_k}=0;$

\medbreak
\textup{(2)} $\omega_m(\lambda\delta, f)_{p,d\mu_k}\lesssim\max\{1,
\lambda^{2m}\}\omega_m(\delta, f)_{p,d\mu_k}.$
\end{remark}

\section{Some inequalities for entire functions}

In this section, we study weighted and fractional analogues of the inequalities for entire functions.
In particular, we obtain  in fractional case  Bernstein's inequality  (Corollary
\ref{cor5.3}), Nikolskii--Stechkin's inequality  (Corollary
\ref{cor5.5}), and Boas-type inequality (Corollary \ref{cor5.6}).

\begin{lemma}\label{lem5.1}
If $\sigma,t>0$, $r,m\geq 0$, $1 \leq p\leq \infty$, $f\in B_{p, k}^\sigma$, then
\[
\Delta_{t}^{m}(-\Delta_k)^{r/2}f,\,\, (-\Delta_k)^{r/2}\Delta_{t}^{m}f\in L^{p}(\mathbb{R}^d,d\mu_k),
\]
and
\[
\Delta_{t}^{m}(-\Delta_k)^{r/2}f=(-\Delta_k)^{r/2}\Delta_{t}^{m}f.
\]
\end{lemma}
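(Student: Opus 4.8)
The plan is to represent both objects as a single convolution $f\Ast{k}\Phi$ with a radial $L^1$ kernel $\Phi$, and then to invoke the uniqueness of the $L^p$-representative inside each equivalence class of $\Phi_k'$.

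First I would observe that, since $f\in L^p\subset\Phi_k'$, both $\Delta_t^m(-\Delta_k)^{r/2}f$ and $(-\Delta_k)^{r/2}\Delta_t^m f$ are well defined in $\Phi_k'$ and, by \eqref{transformdifference2}, carry one and the same Dunkl transform $|\Cdot|^r j_{\lambda_k,m}(t|\Cdot|)\mathcal{F}_k(f)$; hence they already coincide in $\Phi_k'$. It therefore suffices to produce a single function of $L^{p}(\mathbb{R}^d,d\mu_k)$ lying in this common class: as each class contains a unique $L^p$-element, both distributions are then represented by it, which gives at once the $L^p$-membership and the asserted equality. (The degenerate case $r=m=0$ is trivial, so I assume $r+m>0$.)

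Next I would build the kernel. Using the de la Vall\'ee Poussin cutoff $\eta\in\mathcal{S}(\mathbb{R}^d)$ ($\eta\equiv1$ on $B_1$, $\supp\eta\subset B_2$), so that $\eta(\Cdot/\sigma)\equiv1$ on $B_\sigma\supset\supp\mathcal{F}_k(f)$ (Proposition~\ref{prop3.1}), I set
\[
\Lambda(y)=|y|^r j_{\lambda_k,m}(t|y|)\,\eta(y/\sigma).
\]
The key point is that the combined singularity of $|y|^r j_{\lambda_k,m}(t|y|)$ at the origin is a pure power: the series for $j_{\lambda_k}$ gives $1-j_{\lambda_k}(u)=u^2\chi(u^2)$ with $\chi$ analytic, $\chi(0)=\frac{1}{4(\lambda_k+1)}>0$, and $j_{\lambda_k}(u)<1$ for $u>0$, whence $j_{\lambda_k,m}(t|y|)=(1-j_{\lambda_k}(t|y|))^{m/2}=|y|^m h(|y|)$ with $h$ smooth and even on the range $[0,2\sigma]$ cut out by $\eta$. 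Thus
\[
\Lambda(y)=|y|^{r+m}w(y),\qquad w(y)=h(|y|)\,\eta(y/\sigma)\in\mathcal{S}_{\mathrm{rad}}(\mathbb{R}^d),
\]
and Lemma~\ref{lem3.3}, applied with exponent $r+m>0$ and the Schwartz function $w$, yields $\Phi:=\mathcal{F}_k(\Lambda)=\mathcal{F}_k(|\Cdot|^{r+m}w)\in L^1_{\mathrm{rad}}(\mathbb{R}^d,d\mu_k)$. Since $\Lambda$ is even and lies in $C^\infty_\Pi(\mathbb{R}^d\setminus\{0\})$, and $\mathcal{F}_k(\Phi)=\Lambda$, I also get $\Phi\in N_k$.

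Finally I would conclude: Proposition~\ref{prop2.4} gives $f\Ast{k}\Phi\in L^{p}(\mathbb{R}^d,d\mu_k)$ with $\|f\Ast{k}\Phi\|_{p,d\mu_k}\le\|f\|_{p,d\mu_k}\|\Phi\|_{1,d\mu_k}$, while Lemma~\ref{lem3.4} together with \eqref{transformconvolution2} identifies $\mathcal{F}_k(f\Ast{k}\Phi)=\mathcal{F}_k(\Phi)\mathcal{F}_k(f)=\Lambda\,\mathcal{F}_k(f)$, which equals $|\Cdot|^r j_{\lambda_k,m}(t|\Cdot|)\mathcal{F}_k(f)$ because $\eta(\Cdot/\sigma)\equiv1$ on $\supp\mathcal{F}_k(f)$. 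Hence $f\Ast{k}\Phi$ is the sought $L^p$-representative of the common $\Phi_k'$-class fixed by \eqref{transformdifference2}, which finishes the proof. The hard part is exactly the $L^1$-bound for $\Phi$: the multiplier is singular at the origin, and one must first absorb the non-smooth factor $j_{\lambda_k,m}(t|y|)$ into a genuine Schwartz function before Lemma~\ref{lem3.3} can be invoked. This is what the factorization $j_{\lambda_k,m}(t|y|)=|y|^m h(|y|)$ accomplishes, and verifying that $h$ is smooth across the origin, which rests on the exact order-$2$ vanishing of $1-j_{\lambda_k}$ with positive leading coefficient, is the crux of the argument.
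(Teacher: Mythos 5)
Your proof is correct and follows essentially the same route as the paper's: identify the two distributions via their common Dunkl transform $|\Cdot|^r j_{\lambda_k,m}(t|\Cdot|)\mathcal{F}_k(f)$, insert the cutoff $\eta(\Cdot/\sigma)$ (legitimate by Proposition~\ref{prop3.1}), factor the multiplier as $|y|^{r+m}$ times a Schwartz function, and conclude with Lemma~\ref{lem3.3}, Lemma~\ref{lem3.4} and Proposition~\ref{prop2.4}. The only difference is that you spell out the smoothness of $j_{\lambda_k,m}(t|y|)/(t|y|)^m$ (via the positivity and exact order-$2$ vanishing of $1-j_{\lambda_k}$), which the paper simply asserts.
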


\begin{proof} By Proposition~\ref{prop3.1} we can assume $f\in\Phi_k'$ and $\mathrm{supp}\,
\mathcal{F}_k(f)\subset B_{\sigma}$. Applying \eqref{distributiontransform}, \eqref{transformconvolution2},
and \eqref{transformdifference1} we obtain
\begin{align*}
\mathcal{F}_k(\Delta_{t}^{m}(-\Delta_k)^{r/2}f)&=\mathcal{F}_k((-\Delta_k)^{r/2}\Delta_{t}^{m}f)
=|\Cdot|^rj_{\lambda_{k},m}(t|\Cdot|)\mathcal{F}_k(f)\\&
=|\Cdot|^rj_{\lambda_{k},m}(t|\Cdot|)\eta(\Cdot/\sigma)\mathcal{F}_k(f)=g\mathcal{F}_k(f)=\mathcal{F}_k(f\Ast{k}\mathcal{F}_k(g)),
\end{align*}
where
\[
g(y)=t^m|y|^{r+m}\frac{j_{\lambda_{k},m}(t|y|)}{(t|y|)^m}\eta\Bigl(\frac{|y|}{\sigma}\Bigr),\quad \mathcal{F}_k(g)\in N_k.
\]
Hence,
\[
(f\Ast{k}\mathcal{F}_k(g))=\Delta_{t}^{m}(-\Delta_k)^{r/2}f=(-\Delta_k)^{r/2}\Delta_{t}^{m}f.
\]
Since $\frac{j_{\lambda_{k},m}(t|y|)}{(t|y|)^m}\in C^{\infty}(\mathbb{R}^d)$ and
$\frac{j_{\lambda_{k},m}(t|y|)}{(t|y|)^m}\eta(|y|/\sigma)\in\mathcal{S}(\mathbb{R}^d)$, then by
Lemma~\ref{lem3.3}, $\mathcal{F}_k(g)\in L^1(\mathbb{R}^d,d\mu_k)$. If $f\in L^p(\mathbb{R}^d,d\mu_k)$,
then the statements of Lemma~\ref{lem5.1} follow from Lemma~\ref{lem3.4} and Proposition~\ref{prop2.4}.
\end{proof}

Quantitative estimates of the norms of entire functions will be given in the following theorem.

\begin{theorem}\label{thm5.2}
If $\sigma>0$, $r_1,r_2\geq 0$, $m_1,m_2\geq 0$, $\rho=r_1+m_1-r_2-m_2\geq 0$, $1 \leq p\leq \infty$,
$0<\delta\leq t\leq 1/(2\sigma)$, and $f\in B_{p, k}^\sigma$, then
\begin{equation}\label{eq5.1}
\delta^{-m_1}\|\Delta_{\delta}^{m_1}(-\Delta_k)^{r_1/2}f\|_{p,d\mu_k}\lesssim
\sigma^{\rho}t^{-m_2}\|\Delta_{t}^{m_2}(-\Delta_k)^{r_2/2}f\|_{p,d\mu_k},
\end{equation}
where constants do not depend on $\sigma, \delta,t$, and $f$.
\end{theorem}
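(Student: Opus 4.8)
The plan is to pass to the Dunkl transform side, where both members of \eqref{eq5.1} are multiplier transforms of $\mathcal{F}_k(f)$, and to realize the left-hand quantity as a convolution of the right-hand quantity with an $L^1$-kernel whose norm carries exactly the factor $\sigma^{\rho}\delta^{m_1}t^{-m_2}$. By Proposition~\ref{prop3.1} I may assume $f\in\Phi_k'$ with $\mathrm{supp}\,\mathcal{F}_k(f)\subset B_\sigma$, and by Lemma~\ref{lem5.1} both $\Delta_{\delta}^{m_1}(-\Delta_k)^{r_1/2}f$ and $\Delta_{t}^{m_2}(-\Delta_k)^{r_2/2}f$ are genuine functions in $L^{p}(\mathbb{R}^d,d\mu_k)$. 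Using \eqref{transformdifference2} together with the identity $\mathcal{F}_k(f)=\eta(\Cdot/\sigma)\mathcal{F}_k(f)$ (valid since $\eta\equiv1$ on $B_\sigma$), I would write
\[
\mathcal{F}_k\bigl(\Delta_{\delta}^{m_1}(-\Delta_k)^{r_1/2}f\bigr)=\mu\,\mathcal{F}_k\bigl(\Delta_{t}^{m_2}(-\Delta_k)^{r_2/2}f\bigr),\qquad \mu(y)=\delta^{m_1}t^{-m_2}|y|^{\rho}a_1(\delta|y|)a_2(t|y|)\eta(y/\sigma),
\]
where $a_1(s)=j_{\lambda_k,m_1}(s)/s^{m_1}$ and $a_2(s)=s^{m_2}/j_{\lambda_k,m_2}(s)$; a short computation using $\rho=r_1+m_1-r_2-m_2$ confirms that $\mu(y)|y|^{r_2}j_{\lambda_k,m_2}(t|y|)=|y|^{r_1}j_{\lambda_k,m_1}(\delta|y|)\eta(y/\sigma)$.

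The first point is that $a_1,a_2$ are smooth and positive on $[0,1]$: since $j_{\lambda_k,m}(s)=(1-j_{\lambda_k}(s))^{m/2}$ and $(1-j_{\lambda_k}(s))/s^2$ is analytic, even, and strictly positive there, $a_1(s)=\bigl((1-j_{\lambda_k}(s))/s^2\bigr)^{m_1/2}$ and $a_2(s)=\bigl((1-j_{\lambda_k}(s))/s^2\bigr)^{-m_2/2}$ are smooth even functions of $s$ with bounded derivatives. Because $0<\delta\le t\le1/(2\sigma)$, on $\mathrm{supp}\,\eta(\Cdot/\sigma)\subset B_{2\sigma}$ the arguments $\delta|y|,t|y|$ lie in $[0,1]$, so $\mu\in C^{\infty}_{\Pi}(\mathbb{R}^d\setminus\{0\})$ and the product is legitimate by Lemma~\ref{lem3.1}. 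Putting $G=\mathcal{F}_k^{-1}(\mu)$, the function $G$ is radial and even with $\mathcal{F}_k(G)=\mu\in N_k$; once I show $G\in L^{1}(\mathbb{R}^d,d\mu_k)$ with $\|G\|_{1,d\mu_k}\lesssim\sigma^{\rho}\delta^{m_1}t^{-m_2}$, Lemma~\ref{lem3.4}, \eqref{transformconvolution2} and Proposition~\ref{prop2.4} yield $\Delta_{\delta}^{m_1}(-\Delta_k)^{r_1/2}f=\bigl(\Delta_{t}^{m_2}(-\Delta_k)^{r_2/2}f\Ast{k}G\bigr)$ and therefore
\[
\|\Delta_{\delta}^{m_1}(-\Delta_k)^{r_1/2}f\|_{p,d\mu_k}\le\|G\|_{1,d\mu_k}\,\|\Delta_{t}^{m_2}(-\Delta_k)^{r_2/2}f\|_{p,d\mu_k};
\]
dividing by $\delta^{m_1}$ gives \eqref{eq5.1}.

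It remains to estimate $\|G\|_{1,d\mu_k}$. Setting $\alpha=\delta\sigma$, $\beta=t\sigma\in(0,1/2]$ I write $\mu(y)=\delta^{m_1}t^{-m_2}\sigma^{\rho}\Phi(y/\sigma)$ with $\Phi(v)=|v|^{\rho}a_1(\alpha|v|)a_2(\beta|v|)\eta(v)$. The homogeneity $e_k(\sigma x,y)=e_k(x,\sigma y)$ and $d\mu_k(\sigma\,\Cdot)=\sigma^{d_k}\,d\mu_k$ give the dilation rule $\mathcal{F}_k(\Phi(\Cdot/\sigma))(x)=\sigma^{d_k}\mathcal{F}_k(\Phi)(\sigma x)$, and since $\Phi$ is even and radial, $\|G\|_{1,d\mu_k}=\|\mathcal{F}_k(\mu)\|_{1,d\mu_k}=\delta^{m_1}t^{-m_2}\sigma^{\rho}\|\mathcal{F}_k(\Phi)\|_{1,d\mu_k}$ after the substitution $\sigma x\mapsto x$. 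Thus the theorem reduces to the uniform bound $\|\mathcal{F}_k(\Phi)\|_{1,d\mu_k}\lesssim1$ for $\alpha,\beta\in(0,1/2]$. Writing $\Phi=|\Cdot|^{\rho}\chi$ with $\chi=a_1(\alpha|\Cdot|)a_2(\beta|\Cdot|)\eta$, and noting that each differentiation of $a_1(\alpha|\Cdot|)$ or $a_2(\beta|\Cdot|)$ produces factors $\alpha,\beta\le1$ while their arguments stay in $[0,1]$, the functions $\chi$ form a bounded subset of $\mathcal{S}(\mathbb{R}^d)$ supported in $B_2$, with seminorms bounded independently of $\alpha,\beta$. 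I would then run the argument of Lemma~\ref{lem3.3}: decomposing $|v|^{\rho}=\sum_s(-1)^s\binom{\rho}{s}(1+|v|^2)^{\rho-s}$ and splitting $\Phi$ into a Schwartz part plus a tail controlled by the kernels $\mathcal{F}_k((1+|\Cdot|^2)^{-\gamma})$ (whose $L^1$-norms are $\le1$), one bounds $\|\mathcal{F}_k(\Phi)\|_{1,d\mu_k}$ by a finite sum of Schwartz seminorms of $\chi$, hence uniformly.

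The main obstacle is precisely this last estimate: for non-even-integer $\rho$ the factor $|v|^{\rho}$ is singular at the origin, so $\Phi$ fails to be smooth there, and the $L^1$-boundedness of $\mathcal{F}_k(\Phi)$ must be extracted from the singularity analysis of Lemma~\ref{lem3.3} while verifying that the resulting constant depends only on $\rho$ and on finitely many seminorms of $\chi$ --- thus not on $\alpha,\beta$, that is, not on $\sigma,\delta,t$ or $f$. The remaining ingredients (the transform-side factorization, the commutation supplied by Lemma~\ref{lem5.1}, and the dilation rule) are routine.
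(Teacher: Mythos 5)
Your proposal is correct, and it reaches \eqref{eq5.1} by the same outer mechanism as the paper (pass to the Dunkl transform side, realize the left-hand side as a convolution of the right-hand side with a radial $L^1$ kernel, then apply Lemma~\ref{lem3.4} and Proposition~\ref{prop2.4}); the genuine difference is in how the uniform $L^1$ bound for the kernel is obtained. The paper factors the multiplier as a \emph{product} $\sigma^{\rho}t^{-m_2}\chi(\Cdot/\sigma)\varphi_{\theta}(t\Cdot)$, with $\chi(y)=|y|^{\rho}\eta(y)$, $\varphi_{\theta}(y)=\psi_1(\theta y)\eta(y)/\psi_2(y)$, $\psi_i(y)=j_{\lambda_k,m_i}(|y|)/|y|^{m_i}$, $\theta=\delta/t\in[0,1]$, so that its kernel is the iterated convolution $\mathcal{F}_k(\chi(\Cdot/\sigma))\Ast{k}\mathcal{F}_k(\varphi_{\theta}(t\Cdot))$: the singular factor $|y|^{\rho}$ then sits in a single \emph{fixed} function $\chi$, to which Lemma~\ref{lem3.3} applies as a black box (dilation invariance of the $L^1$ norm absorbs $\sigma$), while all parameter dependence lives in the Schwartz-class family $\varphi_{\theta}$, for which the paper proves that $n(\theta)=\|\mathcal{F}_k(\varphi_{\theta})\|_{1,d\mu_k}$ is continuous on the compact interval $[0,1]$ by an explicit Hankel-transform integration by parts based on \eqref{besselestimate2}. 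You instead keep a single two-parameter kernel $|\Cdot|^{\rho}a_1(\alpha|\Cdot|)a_2(\beta|\Cdot|)\eta$ and get uniformity from (i) the observation that $\chi_{\alpha,\beta}=a_1(\alpha|\Cdot|)a_2(\beta|\Cdot|)\eta$ is a bounded subset of $\mathcal{S}(\mathbb{R}^d)$ --- which is right, since $(1-j_{\lambda_k}(s))/s^2$ is a positive even power series, so $a_1,a_2$ are smooth functions of $|y|^2$ with derivatives uniformly controlled for $0<\alpha\leq\beta\leq1/2$ --- and (ii) a quantitative rereading of Lemma~\ref{lem3.3} in which the constant is $c(\rho)$ plus finitely many Schwartz seminorms of the smooth factor. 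Point (ii) does go through: in the paper's proof of that lemma the tail is bounded by $c(\rho)\|\mathcal{F}_k^{-1}(\chi_{\alpha,\beta})\|_{1,d\mu_k}$ and the finitely many remaining terms by $L^1$ norms of Dunkl transforms of functions ranging in a bounded subset of $\mathcal{S}(\mathbb{R}^d)$. But be aware that this step silently uses more than the invariance $\mathcal{F}_k(\mathcal{S})=\mathcal{S}$ recorded in Proposition~\ref{prop2.1}: you need $\mathcal{F}_k$ to carry bounded subsets of $\mathcal{S}(\mathbb{R}^d)$ to bounded subsets, i.e.\ its continuity as a topological isomorphism of $\mathcal{S}(\mathbb{R}^d)$ (de Jeu \cite{Jeu93}), so this should be cited explicitly or replaced by a hands-on bound such as $(1+|x|^2)^m|\mathcal{F}_k(g)(x)|\leq\|(I-\Delta_k)^m g\|_{1,d\mu_k}$ applied to your uniformly bounded, uniformly compactly supported family. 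In exchange for importing that fact, your route avoids the paper's longest computation (the continuity of $n(\theta)$); the paper's factorization, conversely, keeps the argument self-contained at the price of that explicit Bessel-function estimate.
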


\begin{proof}
As in the proof of Lemma~\ref{lem5.1}, we have
$f\in\Phi_k'$, $\mathrm{supp}\,\mathcal{F}_k(f)\subset B_{\sigma}$, and
\begin{align*}
\mathcal{F}_k(\Delta_{\delta}^{m_1}(-\Delta_k)^{r_1/2}f)=|\Cdot|^{r_1-r_2}\frac{j_{\lambda_{k},m_1}(\delta|\Cdot|)}
{j_{\lambda_{k},m_2}(t|\Cdot|)}\eta\Bigl(\frac{|\Cdot|}{\sigma}\Bigr)|\Cdot|^{r_2}j_{\lambda_{k},m_2}(t|\Cdot|)\mathcal{F}_k(f).\\&
\end{align*}
Since for $0<t\leq 1/(2\sigma)$
\[
\eta(y/\sigma)=\eta(y/\sigma)\eta(ty),
\]
we obtain
\[
\delta^{-m_1}\mathcal{F}_k(\Delta_{\delta}^{m_1}(-\Delta_k)^{r_1/2}f)=\sigma^{\rho}t^{-m_2}\chi(\Cdot/\sigma)\varphi_{\theta}(t\Cdot)
\mathcal{F}_k(\Delta_{\delta}^{m_2}(-\Delta_k)^{r_2/2}f)
\]
\[
=\sigma^{\rho}t^{-m_2}\mathcal{F}_k(\Delta_{\delta}^{m_2}
(-\Delta_k)^{r_2/2}f\Ast{k}(\mathcal{F}_k(\chi(\Cdot/\sigma))\Ast{k}\mathcal{F}_k(\varphi_{\theta}(t\Cdot)))),
\]
where
\[
\theta=\frac{\delta}{t}\in (0, 1],\quad \chi(y)=|y|^{\rho}\eta(y),\quad
\psi_1(y)=\frac{j_{\lambda_{k},m_1}(|y|)}{|y|^{m_1}},
\]
\[
\psi_2(y)=\frac{j_{\lambda_{k},m_2}(|y|)}{|y|^{m_2}},\quad\varphi_{\theta}(y)=\frac{\psi_1(\theta y)}{\psi_2(y)}\eta(y).
\]
Hence,
\[
\delta^{-m_1}\Delta_{\delta}^{m_1}(-\Delta_k)^{r_1/2}f=\sigma^{\rho}t^{-m_2}(\Delta_{\delta}^{m_2}
(-\Delta_k)^{r_2/2}f\Ast{k}(\mathcal{F}_k(\chi(\Cdot/\sigma))\Ast{k}\mathcal{F}_k(\varphi_{\theta}(t\Cdot)))).
\]
By Lemma~\ref{lem3.3}
\[
\mathcal{F}_k(\chi(\Cdot/\sigma))\in N_k\cap L^1(\mathbb{R}^d,d\mu_k),\quad
\|\mathcal{F}_k(\chi(\Cdot/\sigma))\|_{1,d\mu_k}=\|\mathcal{F}_k(\chi)\|_{1,d\mu_k},
\]
\[
\psi_1(y),\,\psi_2(y)\in C^{\infty}(\mathbb{R}^d),\,\,\psi_2(y)>0,\quad \varphi_{\theta}(y)\in\mathcal{S}(\mathbb{R}^d)\times C^{\infty}([0,1]),
\]
\[
\mathcal{F}_k(\varphi_{\theta}(t\Cdot))\in N_k\cap L^1(\mathbb{R}^d,d\mu_k),\quad
\|\mathcal{F}_k(\varphi_{\theta}(t\Cdot)))\|_{1,d\mu_k}=\|\mathcal{F}_k(\varphi_{\theta})\|_{1,d\mu_k}.
\]
Applying for $f\in L^p(\mathbb{R}^d,d\mu_k)$, Lemma~\ref{lem3.4} and Proposition~\ref{prop2.4} two times, we obtain
\begin{align*}
&\delta^{-m_1}\|\Delta_{\delta}^{m_1}(-\Delta_k)^{r_1/2}f\|_{p,d\mu_k}\\
&\qquad \leq\sigma^{\rho}t^{-m_2}\|\mathcal{F}_k(\chi)\|_{1,d\mu_k}
\max_{0\leq\theta\leq 1}\|\mathcal{F}_k(\varphi_{\theta})\|_{1,d\mu_k}\|\Delta_{t}^{m_2}(-\Delta_k)^{r_2/2}f\|_{p,d\mu_k}\\
&\qquad \lesssim\sigma^{\rho}t^{-m_2}\|\Delta_{t}^{m_2}(-\Delta_k)^{r_2/2}f\|_{p,d\mu_k}
\end{align*}
provided that the function $n(\theta)=\|\mathcal{F}_k(\varphi_{\theta})\|_{1,d\mu_k}$ is continuous on $[0, 1]$.
Let us prove this.

Set
 $\varphi_{\theta}(y)=\varphi_{\theta 0}(|y|)$, $r=|y|$, $\rho=|x|$. Then by \eqref{averaging}
\begin{align*}
n(\theta)&=\int_{\R^d}\Bigl|\int_{\mathbb{R}^d}\varphi_{\theta}(y)e_k(x,y)\,d\mu_k(y)\Bigr|\,d\mu_k(x)\\
&=\int_{0}^{\infty}\Bigl|\int_{0}^2\varphi_{\theta 0}(r)j_{\lambda_k}(\rho
r)\,d\nu_{\lambda_k}(r)\Bigr|\,d\nu_{\lambda_k}(\rho)\\
&=b_{\lambda_k}^2\int_{0}^{\infty}\Bigl|\int_{0}^2\varphi_{\theta 0}(r)j_{\lambda_k}(\rho
r)r^{2\lambda_k+1}\,dr\Bigr|\rho^{2\lambda_k+1}\,d\rho.
\end{align*}
The inner integral continuously depends on $\theta$. Let us show that the outer integral converges uniformly in
 $\theta\in [0,1]$. Since
\[
\frac{d}{dr}\bigl(j_{\lambda_k+1}(\rho r)r^{2\lambda_k+2}\bigr)=(2\lambda_k+2)j_{\lambda_k}(\rho r)r^{2\lambda_k+1},
\]
integrating by parts implies
\begin{align*}
&\int_{0}^2\varphi_{\theta 0}(r)j_{\lambda_k}(\rho
r)r^{2\lambda_k+1}\,dr=
\frac{1}{2\lambda_k+2}\int_{0}^2\varphi_{\theta 0}(r)\,d\bigl(j_{\lambda_k+1}(\rho
r)r^{2\lambda_k+2}\bigr)\\
&\qquad =-\frac{1}{2\lambda_k+2}\int_{0}^2\frac{(\varphi_{\theta 0}(r))'}{r}j_{\lambda_k+1}(\rho
r)r^{2\lambda_k+3}\,dr=\dots\\
&\qquad =(-1)^s\Bigl(\prod_{j=1}^s(2\lambda_k+2s)\Bigr)^{-1}
\int_{0}^2
\varphi_{\theta 0}^{[s]}(r)j_{\lambda_k+s}(\rho r)r^{2\lambda_k+2s+1}\,dr,
\end{align*}
where
\[
\varphi_{\theta 0}^{[s]}(r):=\frac{\frac{d}{dr}\varphi_{\theta 0}^{[s-1]}(r)}{r}\in C^{\infty}(\mathbb{R}_+\times [0,1]),
\]
since $\varphi_{\theta 0}(r)$ is even in $r$ and $\varphi_{\theta 0}\in C^{\infty}(\mathbb{R}_+\times [0,1])$.
This and \eqref{besselestimate2} give
\[
\Bigl|\int_{0}^2\varphi_{\theta 0}(r)j_{\lambda_k}(\rho
r)r^{2\lambda_k+1}\,dr\Bigr|\leq \frac{c_1(\lambda_k, m_1,m_2,
s)}{(\rho+1)^{\lambda_k+s+1/2}}
\]
and, for $s>\lambda_k+3/2$,
\[
n(\theta)\leq c_2(\lambda_k, m_1,m_2, s)\int_{0}^{\infty}(1+\rho)^{-(s-\lambda_k-1/2)}\,d\rho\leq c_3(\lambda_k, m_1,m_2, s),
\]
completing the proof of continuity of $n(\theta)$. Theorem~\ref{thm5.2} is proved.
\end{proof}

We give some special cases of inequality \eqref{eq5.1}.

\begin{corollary}[Bernstein's inequality \cite{Nik75}]\label{cor5.3}
If $\sigma>0$, $r>0$, $1 \leq p\leq \infty$, $f\in B_{p, k}^\sigma$, then
\begin{equation}\label{eq5.2}
\|(-\Delta_k)^{r/2}f\|_{p,d\mu_k} \lesssim \sigma^{r}\|f\|_{p,d\mu_k}.
\end{equation}
\end{corollary}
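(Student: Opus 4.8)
The plan is to obtain \eqref{eq5.2} as a direct specialization of Theorem~\ref{thm5.2}. The key bookkeeping observation is that differences and fractional powers of order zero reduce to the identity: since $\Delta_t^0 f=(I-T^t)^0f=f$ (the series in \eqref{difference} collapses to its $s=0$ term because $\binom{0}{s}=0$ for $s\geq 1$) and likewise $(-\Delta_k)^{0}f=f$, both operators degenerate harmlessly when their orders vanish. This is precisely what lets the two-parameter inequality of Theorem~\ref{thm5.2} collapse into the clean Bernstein estimate.

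Concretely, I would invoke Theorem~\ref{thm5.2} with the parameter choice $r_1=r$, $m_1=0$ on the left and $r_2=0$, $m_2=0$ on the right. With $m_1=0$ the operator $\Delta_\delta^{m_1}$ becomes the identity and the prefactor $\delta^{-m_1}$ equals $1$, so the left-hand side of \eqref{eq5.1} reduces to $\|(-\Delta_k)^{r/2}f\|_{p,d\mu_k}$. With $r_2=m_2=0$ both $\Delta_t^{m_2}$ and $(-\Delta_k)^{r_2/2}$ become the identity and $t^{-m_2}=1$, so the right-hand side reduces to $\sigma^{\rho}\|f\|_{p,d\mu_k}$. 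Here the exponent is $\rho=r_1+m_1-r_2-m_2=r$, which meets the hypothesis $\rho\geq 0$ because $r>0$.

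It then remains only to verify that the remaining constraints of Theorem~\ref{thm5.2} can be met. The admissibility conditions $r_1,r_2\geq 0$ and $m_1,m_2\geq 0$ hold trivially for this choice, and the chain $0<\delta\leq t\leq 1/(2\sigma)$ is satisfied by taking $\delta=t=1/(2\sigma)$ (the precise values of $\delta$ and $t$ are immaterial once $m_1=m_2=0$). The membership $(-\Delta_k)^{r/2}f\in L^p(\mathbb{R}^d,d\mu_k)$ needed to make the left-hand norm finite is furnished by Lemma~\ref{lem5.1} in the degenerate case $m=0$. Since no genuine estimate is required beyond substituting the exponents, I do not anticipate any real obstacle; the only point demanding care is the reduction of order-zero differences and fractional powers to the identity, which is exactly what legitimizes the specialization.
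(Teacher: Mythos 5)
Your proposal is correct and matches the paper's own derivation: the paper states Corollary~\ref{cor5.3} precisely as a special case of Theorem~\ref{thm5.2}, obtained by the same specialization $r_1=r$, $m_1=0$, $r_2=m_2=0$ (so $\rho=r$), with the order-zero difference and fractional power collapsing to the identity exactly as you argue. Your additional care about admissibility of the parameters and the use of Lemma~\ref{lem5.1} with $m=0$ to ensure $(-\Delta_k)^{r/2}f\in L^{p}(\mathbb{R}^d,d\mu_k)$ is sound and fills in the details the paper leaves implicit.
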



\medskip

The next result follows from Lemma \ref{lem4.4} and Corollary~\ref{cor5.3}.

\begin{corollary}\label{cor5.4}
If $\sigma,\,\delta>0$, $m>0$, $1 \leq p\leq \infty$, $f\in B_{p, k}^\sigma$, then
\[
\omega_m(\delta, f)_{p,d\mu_k}\lesssim (\sigma\delta)^{m}\|f\|_{p,d\mu_k},
\]
where constants do not depend on $\sigma, \delta,$ and $f$.
\end{corollary}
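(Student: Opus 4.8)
The plan is to combine the two cited results directly, taking the smoothness parameter in Lemma~\ref{lem4.4} to be exactly $r=m$. First I would observe that since $f\in B_{p, k}^\sigma$, Bernstein's inequality (Corollary~\ref{cor5.3}) applied with $r=m$ gives $(-\Delta_k)^{m/2}f\in L^{p}(\mathbb{R}^d,d\mu_k)$, so that in fact $f\in W_{p, k}^{m}$. This membership is precisely what is needed to legitimately invoke Lemma~\ref{lem4.4}, whose hypothesis $m\ge r$ is satisfied with equality in the choice $r=m$.

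Next I would apply the norm bound of Lemma~\ref{lem4.4} in the form
\[
\|\Delta_t^mf\|_{p,d\mu_k}\lesssim t^{m}\|(-\Delta_k)^{m/2}f\|_{p,d\mu_k},
\]
valid for every $t>0$ with an implied constant independent of $t$. This already controls each difference appearing in the modulus of smoothness by a fixed multiple of $t^{m}\|(-\Delta_k)^{m/2}f\|_{p,d\mu_k}$.

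Then I would insert Bernstein's inequality $\|(-\Delta_k)^{m/2}f\|_{p,d\mu_k}\lesssim\sigma^{m}\|f\|_{p,d\mu_k}$ to obtain, for every $0<t\le\delta$,
\[
\|\Delta_t^mf\|_{p,d\mu_k}\lesssim(t\sigma)^{m}\|f\|_{p,d\mu_k}\le(\delta\sigma)^{m}\|f\|_{p,d\mu_k}.
\]
Taking the supremum over $0<t\le\delta$ in the definition of $\omega_m(\delta,f)_{p,d\mu_k}$ yields the asserted estimate, with a constant depending only on $m$, $\lambda_k$, and $p$, but not on $\sigma$, $\delta$, or $f$.

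There is essentially no analytic obstacle here: the convolution representation of $\Delta_t^mf$ and the $L^1$-boundedness of the associated kernel $g_{m,r}^t$ have already been established in the proof of Lemma~\ref{lem4.4}, and the uniform control of $\|(-\Delta_k)^{m/2}f\|_{p,d\mu_k}$ is furnished by Corollary~\ref{cor5.3}. Thus the only point meriting explicit mention is the verification $f\in W_{p, k}^{m}$, which is a by-product of Bernstein's inequality, and the present corollary follows as a formal consequence of the two earlier results.
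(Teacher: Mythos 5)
Your proposal is correct and coincides with the paper's own (unstated but implied) argument: the paper derives Corollary~\ref{cor5.4} precisely by combining Lemma~\ref{lem4.4} with $r=m$ and Bernstein's inequality (Corollary~\ref{cor5.3}), exactly as you do. Your explicit verification that $f\in W_{p,k}^{m}$ via Corollary~\ref{cor5.3} is a sound way to justify invoking Lemma~\ref{lem4.4}, and the remaining steps (uniformity in $t$ and taking the supremum over $0<t\le\delta$) are handled correctly.
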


\medskip
\begin{corollary}[Nikolskii--Stechkin's inequality \cite{Nik48,Ste48}]\label{cor5.5}
If $\sigma>0$, $m>0$, $1 \leq p\leq \infty$, $0<t\leq 1/(2\sigma)$, $f\in B_{p, k}^\sigma$, then
\begin{equation}\label{eq5.3}
\|(-\Delta_k)^{m/2}f\|_{p,d\mu_k} \lesssim t^{-m}\|\Delta_{t}^{m} f\|_{p,d\mu_k}.
\end{equation}
\end{corollary}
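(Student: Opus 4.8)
The plan is to read off \eqref{eq5.3} as a single specialization of Theorem~\ref{thm5.2}, chosen so that the difference operator on the left degenerates to the identity and the power of $\sigma$ becomes trivial. In the master inequality \eqref{eq5.1} I would take
\[
r_1=m,\quad m_1=0,\quad r_2=0,\quad m_2=m,\quad \delta=t.
\]

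I would then verify that this choice satisfies all the hypotheses of Theorem~\ref{thm5.2}: the exponents $r_1,r_2,m_1,m_2$ are nonnegative, the parameter $\rho$ is
\[
\rho=r_1+m_1-r_2-m_2=m+0-0-m=0\ge0,
\]
and the range condition $0<\delta=t\le 1/(2\sigma)$ is exactly the hypothesis imposed on $t$ in the corollary. The crucial simplifications are that $m_1=0$ forces $\Delta_\delta^{0}=(I-T^\delta)^{0}=I$ and $r_2=0$ forces $(-\Delta_k)^{0}=I$, while $\sigma^{\rho}=\sigma^{0}=1$. Feeding these values into \eqref{eq5.1} collapses the left side to $\|(-\Delta_k)^{m/2}f\|_{p,d\mu_k}$ and the right side to $t^{-m}\|\Delta_t^{m}f\|_{p,d\mu_k}$, which is precisely \eqref{eq5.3}.

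The main point to confirm---rather than a genuine obstacle---is that the degenerate case $m_1=0$ is admissible in the machinery of Theorem~\ref{thm5.2}. There the relevant kernel is built from $\psi_1(y)=j_{\lambda_k,m_1}(|y|)/|y|^{m_1}$, and for $m_1=0$ one has $j_{\lambda_k,0}\equiv1$, so $\psi_1\equiv1$ and the auxiliary function $\varphi_\theta(y)=\psi_1(\theta y)\psi_2(y)^{-1}\eta(y)=\psi_2(y)^{-1}\eta(y)$ remains smooth with compact support and satisfies the same uniform $L^1$-bound on its Dunkl transform. Since all of the analytic work---the kernel decomposition, the $L^1$-estimates via Lemmas~\ref{lem3.3} and~\ref{lem3.4}, and the continuity of $n(\theta)$---is already carried out in the proof of Theorem~\ref{thm5.2}, the corollary follows with no further computation.
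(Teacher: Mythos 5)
Your proposal is correct and is essentially the paper's own argument: the paper presents Corollary~\ref{cor5.5} simply as a special case of Theorem~\ref{thm5.2}, and your choice $r_1=m$, $m_1=0$, $r_2=0$, $m_2=m$, $\delta=t$ (so $\rho=0$) is exactly the intended specialization of \eqref{eq5.1}. Your additional check that the degenerate value $m_1=0$ is harmless---$\psi_1\equiv 1$, so $\varphi_\theta=\eta/\psi_2$ is smooth, compactly supported, and independent of $\theta$---is a sensible confirmation, but it is already covered by the hypothesis $m_1\geq 0$ in Theorem~\ref{thm5.2}.
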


\medskip
\begin{remark}\label{rem5.1}
By Theorem~\ref{thm4.2}, this inequality can be equivalently written as
\[
\|(-\Delta_k)^mf\|_{p,d\mu_k} \lesssim t^{-m}
K_{m}(t, f)_{p,d\mu_k}.
\]
\end{remark}

\medskip
\begin{corollary}[Boas' inequality \cite{Boa48}]\label{cor5.6}
If $\sigma>0$, $m>0$, $1 \leq p\leq \infty$, $0<\delta\leq t\leq 1/(2\sigma)$, $f\in B_{p, k}^\sigma$, then
\begin{equation}\label{eq5.4}
\delta^{-m}\|\Delta_{\delta}^{m} f\|_{p,d\mu_k} \lesssim t^{-m}\|\Delta_{t}^{m} f\|_{p,d\mu_k}.
\end{equation}
\end{corollary}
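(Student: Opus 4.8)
The plan is to obtain Boas' inequality as a direct specialization of the master inequality~\eqref{eq5.1} in Theorem~\ref{thm5.2}, choosing the parameters so that the fractional Laplacian factors disappear. First I would set $r_1=r_2=0$ and $m_1=m_2=m$ in Theorem~\ref{thm5.2}. With $r_1=r_2=0$ the factors $(-\Delta_k)^{r_1/2}f$ and $(-\Delta_k)^{r_2/2}f$ both reduce to $f$, so that both sides of~\eqref{eq5.1} involve only the differences $\Delta_\delta^m f$ and $\Delta_t^m f$, exactly as required in~\eqref{eq5.4}.

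The key observation is that with this choice the exponent $\rho=r_1+m_1-r_2-m_2=m-m=0$, so the admissibility condition $\rho\ge0$ of Theorem~\ref{thm5.2} holds (with equality) and the scaling factor $\sigma^{\rho}=\sigma^{0}=1$ vanishes from the right-hand side. The remaining hypotheses of Theorem~\ref{thm5.2}, namely $\sigma>0$, $1\le p\le\infty$, $0<\delta\le t\le 1/(2\sigma)$, and $f\in B_{p,k}^{\sigma}$, coincide verbatim with those of Corollary~\ref{cor5.6} (the strict condition $m>0$ here is stronger than the $m_1,m_2\ge0$ assumed in the theorem). Substituting these values, inequality~\eqref{eq5.1} becomes precisely
\[
\delta^{-m}\|\Delta_{\delta}^{m}f\|_{p,d\mu_k}\lesssim t^{-m}\|\Delta_{t}^{m}f\|_{p,d\mu_k},
\]
which is~\eqref{eq5.4}.

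Since the statement is a pure specialization, there is no genuine analytic obstacle remaining at this point: all the difficulty (the decomposition of the multiplier $\varphi_\theta$ via Lemma~\ref{lem3.3}, the repeated integration by parts using the Bessel bounds~\eqref{besselestimate2}, and above all the continuity of $n(\theta)=\|\mathcal{F}_k(\varphi_\theta)\|_{1,d\mu_k}$ on $[0,1]$) was already absorbed into the proof of Theorem~\ref{thm5.2}. The one point I would verify is that the degenerate case $\rho=0$ is genuinely covered rather than tacitly assumed strict; inspecting that proof, the kernel factor $\chi(y)=|y|^{\rho}\eta(y)$ collapses to $\eta(y)\in\mathcal{S}(\mathbb{R}^d)$ when $\rho=0$, so $\mathcal{F}_k(\chi)\in N_k\cap L^{1}(\mathbb{R}^d,d\mu_k)$ continues to hold and every subsequent estimate goes through without modification. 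Hence Corollary~\ref{cor5.6} follows immediately.
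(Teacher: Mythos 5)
Your proof is correct and is exactly the paper's own route: the paper presents Corollary~\ref{cor5.6} as an immediate special case of inequality~\eqref{eq5.1} with $r_1=r_2=0$, $m_1=m_2=m$, hence $\rho=0$ and $\sigma^{\rho}=1$. Your extra check that the degenerate case $\rho=0$ is harmless (since $\chi=\eta\in\mathcal{S}(\mathbb{R}^d)$, so $\mathcal{F}_k(\chi)\in L^{1}(\mathbb{R}^d,d\mu_k)$) is a worthwhile verification that the paper leaves implicit.
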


\medskip
\begin{remark}\label{rem5.2}
Using Theorem~\ref{thm4.2} and taking into account that by \eqref{K-property2}
$\delta^{-m}K_{m}(\delta, f)_{p,d\mu_k}$ is almost decreasing in $\delta$,   inequality (\ref{eq5.4}) can be equivalently written as
\begin{align*}
&\delta^{-m}\|\Delta_{\delta}^{m} f\|_{p,d\mu_k} \asymp t^{-m}\|\Delta_{t}^{m} f\|_{p,d\mu_k},
\\
&\delta^{-m}
 K_{m}(\delta, f)_{p,d\mu_k}
 \asymp t^{-m}K_{m}(t, f)_{p,d\mu_k}.
\end{align*}
\end{remark}

\section{Realization of $K$-functional and modulus of smoothness}

Let the realization of the $K$-functional $K_{r}(t, f)_{p,d\mu_k}$ be given as follows:
\[
\mathcal{R}_{r}(t, f)_{p,d\mu_k}=\inf\{\|f-g\|_{p,d\mu_k}+t^{r}\|(-\Delta_k)^{r/2}g\|_{p,d\mu_k}\colon
g\in B_{p, k}^{1/t}\}
\]
and
\[
\mathcal{R}^*_{r}(t, f)_{p,d\mu_k}=\|f-g^*\|_{p,d\mu_k}+t^{r}\|(-\Delta_k)^{r/2}g^*\|_{p,d\mu_k},
\]
where $g^*\in B_{p, k}^{1/t}$ is the best or near best approximant for $f$ in $L^{p}(\mathbb{R}^d, d\mu_k)$.

The realization of the $K$-functional was defined in \cite{DHI}, where the importance of this concept in the
theory of approximations was shown.

\begin{theorem}\label{thm6.1}
If $t> 0$, $1\leq p\leq\infty$, $r>0$, then for any $f\in
L^{p}(\mathbb{R}^{d},d\mu_{k})$
\begin{equation*}
\begin{aligned}
\mathcal{R}_{r}(t, f)_{p,d\mu_k}&\asymp
\mathcal{R}^*_{r}(t, f)_{p,d\mu_k}
\asymp
K_{r}(t, f)_{p,d\mu_k}\asymp\omega_r(t, f)_{p,d\mu_k}.
\end{aligned}
\end{equation*}
\end{theorem}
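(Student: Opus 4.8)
The plan is to establish the four-fold equivalence through a single cyclic chain of estimates
\[
\omega_r(t,f)_{p,d\mu_k}\asymp K_{r}(t, f)_{p,d\mu_k}\le \mathcal{R}_{r}(t, f)_{p,d\mu_k}\le \mathcal{R}^*_{r}(t, f)_{p,d\mu_k}\lesssim \|\varDelta_t^r f\|_{p,d\mu_k}\asymp K_{r}(t, f)_{p,d\mu_k},
\]
so that the only genuinely new work is the final $\lesssim$. The two inner inequalities are essentially definitional: since every $g\in B_{p,k}^{1/t}$ lies in $W_{p,k}^r$ by Bernstein's inequality (Corollary~\ref{cor5.3}), the infimum defining $K_r$ ranges over a strictly larger class than that defining $\mathcal{R}_r$, whence $K_r\le\mathcal{R}_r$; and $\mathcal{R}_r\le\mathcal{R}^*_r$ because the near-best approximant $g^*$ is one admissible competitor in the infimum for $\mathcal{R}_r$. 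The outer equivalences $\omega_r\asymp K_r\asymp\|\varDelta_t^r f\|$ are exactly Theorem~\ref{thm4.2}, which I will invoke to pass back and forth between $K_r$ and $\|\varDelta_t^r f\|$ at both ends of the chain.

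The substantive step is the bound $\mathcal{R}^*_r(t,f)_{p,d\mu_k}\lesssim \|\varDelta_t^r f\|_{p,d\mu_k}$ for an arbitrary near-best approximant $g^*\in B_{p,k}^{1/t}$. First I would control the approximation term: near-best selection gives $\|f-g^*\|_{p,d\mu_k}\lesssim E_{1/t}(f)_{p,d\mu_k}$, and the Jackson inequality (Theorem~\ref{thm4.1} applied with vanishing Sobolev index and difference order $r$) bounds $E_{1/t}(f)_{p,d\mu_k}\lesssim\omega_r(t,f)_{p,d\mu_k}\asymp\|\varDelta_t^r f\|_{p,d\mu_k}$. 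The term requiring care is $t^r\|(-\Delta_k)^{r/2}g^*\|_{p,d\mu_k}$, since nothing is assumed a priori about the smoothness of $g^*$. Here I would compare $g^*$ with the de la Vall\'{e}e Poussin approximant $P_{1/(2t)}(f)$, which by Lemma~\ref{lem4.5} lies in $B_{p,k}^{1/t}$. Writing $g^*=\bigl(g^*-P_{1/(2t)}(f)\bigr)+P_{1/(2t)}(f)$, the difference $g^*-P_{1/(2t)}(f)$ again belongs to $B_{p,k}^{1/t}$, so Bernstein's inequality (Corollary~\ref{cor5.3} with $\sigma=1/t$) gives
\[
\|(-\Delta_k)^{r/2}(g^*-P_{1/(2t)}(f))\|_{p,d\mu_k}\lesssim t^{-r}\|g^*-P_{1/(2t)}(f)\|_{p,d\mu_k},
\]
and the $L^p$-distance on the right is estimated by $\|f-g^*\|_{p,d\mu_k}+\|f-P_{1/(2t)}(f)\|_{p,d\mu_k}\lesssim\|\varDelta_t^r f\|_{p,d\mu_k}$ using the two previous bounds together with Lemma~\ref{lem4.6} (with vanishing Sobolev index, $m=r$, and $\sigma=1/t$). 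For the remaining piece $\|(-\Delta_k)^{r/2}P_{1/(2t)}(f)\|_{p,d\mu_k}$ I would apply Lemma~\ref{lem4.7} with $m=r$ and $\sigma=1/(2t)$, which yields $\lesssim t^{-r}\|\varDelta_t^r f\|_{p,d\mu_k}$. Combining the two pieces gives $t^r\|(-\Delta_k)^{r/2}g^*\|_{p,d\mu_k}\lesssim\|\varDelta_t^r f\|_{p,d\mu_k}$, and adding the approximation term completes the target inequality.

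The main obstacle is precisely this second term: because $\mathcal{R}^*_r$ is attached to a fixed but unspecified near-best approximant $g^*$, one cannot simply substitute the explicit operator $P_{1/(2t)}(f)$, but must instead transfer the smoothness estimate from the de la Vall\'{e}e Poussin kernel to the abstract $g^*$ through Bernstein's inequality on the band-limited difference $g^*-P_{1/(2t)}(f)$. Once this transfer is secured, the estimates are routine, and feeding the resulting chain into Theorem~\ref{thm4.2} collapses all four quantities to within absolute constants, which proves the theorem.
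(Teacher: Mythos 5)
Your proof is correct, and its skeleton (reduce everything to showing $\mathcal{R}^*_r(t,f)_{p,d\mu_k}\lesssim\omega_r(t,f)_{p,d\mu_k}$, with the approximation term handled by near-best selection plus Jackson's inequality) matches the paper's; but your treatment of the crucial term $t^r\|(-\Delta_k)^{r/2}g^*\|_{p,d\mu_k}$ is genuinely different. The paper applies the fractional Nikolskii--Stechkin inequality (Corollary~\ref{cor5.5}) directly to $g^*\in B_{p,k}^{1/t}$, obtaining $\|(-\Delta_k)^{r/2}g^*\|_{p,d\mu_k}\lesssim t^{-r}\|\varDelta_{t/2}^{r}g^*\|_{p,d\mu_k}$, and then splits $\varDelta_{t/2}^{r}g^*=\varDelta_{t/2}^{r}(g^*-f)+\varDelta_{t/2}^{r}f$, using the uniform $L^p$-boundedness of the difference operator from \eqref{ineqmodulus}; this is shorter, since Corollary~\ref{cor5.5} encapsulates exactly the needed transfer of smoothness from the entire function to its differences. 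You instead interpolate through the de la Vall\'ee Poussin approximant: writing $g^*=(g^*-P_{1/(2t)}(f))+P_{1/(2t)}(f)$, you handle the band-limited difference with the plain Bernstein inequality (Corollary~\ref{cor5.3}) and the $L^p$-bounds $\|f-g^*\|_{p,d\mu_k}\lesssim\|\varDelta_t^rf\|_{p,d\mu_k}$ (Jackson) and $\|f-P_{1/(2t)}(f)\|_{p,d\mu_k}\lesssim\|\varDelta_t^rf\|_{p,d\mu_k}$ (Lemma~\ref{lem4.6} with Sobolev index zero), while the remaining piece is exactly Lemma~\ref{lem4.7}; all parameter choices ($\sigma=1/(2t)$ so that $P_{1/(2t)}(f)\in B_{p,k}^{1/t}$ by Lemma~\ref{lem4.5}) check out. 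What your route buys is independence from Corollary~\ref{cor5.5}: you only need Bernstein among the Section~5 corollaries, re-using the Section~4 lemmas that already underlie Theorem~\ref{thm4.2}, and as a by-product your argument makes explicit that $P_{1/(2t)}(f)$ itself is a realizing element; the cost is a longer chain of estimates where the paper needs a single application of its purpose-built inequality.
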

\begin{proof}By Theorem \ref{thm4.2},
\begin{equation*}
\begin{aligned}
\omega_r(t, f)_{p,d\mu_k}&\asymp
K_{r}(t, f)_{p,d\mu_k}\leq
\mathcal{R}_{r}(t, f)_{p,d\mu_k}\leq
\mathcal{R}^*_{r}(t, f)_{p,d\mu_k},
\end{aligned}
\end{equation*}
where we have used the fact that
$B_{p, k}^{1/t}\subset W_{p, k}^{r}$, which follows from Lemma~\ref{lem5.1}.

Therefore, it is enough to show that
\[
\mathcal{R}^*_{r}(t, f)_{p,d\mu_k} \le C\omega_r(t, f)_{p,d\mu_k}.
\]
Indeed, for $g^*$ being the best approximant (or near best approximant), the Jackson inequality given in
Theorem~\ref{thm4.1} implies that
\begin{equation}\label{eq6.1}
\|f-g^{\ast}\|_{p,d\mu_k}\lesssim E_{1/t}(f)_{p,d\mu_k}\lesssim
 \omega_r(t, f)_{p,d\mu_k}.
\end{equation}
Using the inequality \eqref{eq5.3} and taking into account  \eqref{eq6.1}, we have
\begin{align*}
\|(-\Delta_k)^{r/2}g^*\|_{p,d\mu_k}
&\lesssim t^{-r}\|\varDelta_{t/2}^{r} g^*\|_{p,d\mu_k}
\\ &\lesssim t^{-r}\|\varDelta_{t/2}^{r} (g^*-f)\|_{p,d\mu_k}+t^{-r}\|\varDelta_{t/2}^{r} f\|_{p,d\mu_k}
\\
&\lesssim
t^{-r}\|g^*-f\|_{p,d\mu_k}+t^{-r}\omega_r(t/2, f)_{p,d\mu_k}.
\end{align*}
Using again \eqref{eq6.1}, we arrive at
\[
\|f-g^*\|_{p,d\mu_k}+t^{r}\|(-\Delta_k)^rg^*\|_{p,d\mu_k}\lesssim \omega_r(t, f)_{p,d\mu_k},
\]
completing the proof.
\end{proof}

In classical case $(k\equiv 0)$ Theorem~\ref{thm6.1} was proved in \cite{gogat}.

The next result answers the following question (see, e.g., \cite{Iva11,Rat94,timan}): when does the relation
\begin{equation}\label{eq6.2}
\omega_m\Bigl(\frac1n, f\Bigr)_{p,d\mu_k} \asymp E_n(f)_{p,d\mu_k}
\end{equation}
hold?
\begin{theorem}\label{thm6.2}
Let $1\leq p\leq\infty$ and $m>0$. We have that (\ref{eq6.2}) is valid if and only if for some $r>0$
\begin{equation}\label{eq6.3}
\omega_m\Bigl(\frac{1}{n}, f\Bigr)_{p,d\mu_k} \asymp \omega_{m+r}\Bigl(\frac{1}{n}, f\Bigr)_{p,d\mu_k}.
\end{equation}
\end{theorem}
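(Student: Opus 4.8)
The plan is to peel off the trivial halves of both equivalences, dispose of one implication in a line, and then attack the remaining implication through the realization of the $K$-functional together with a discrete lemma on the best-approximation numbers. Throughout I abbreviate $\omega_s(\delta)=\omega_s(\delta,f)_{p,d\mu_k}$ and $E_n=E_n(f)_{p,d\mu_k}$.

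\textbf{Reductions.} Theorem~\ref{thm4.1} with $r=0$ is the Jackson inequality $E_n\lesssim\omega_s(1/n)$, valid for \emph{every} order $s>0$; Lemma~\ref{lem3.5} gives $\omega_{m+r}(\delta)\lesssim\omega_m(\delta)$. Thus the content of \eqref{eq6.2} is the single inequality $\omega_m(1/n)\lesssim E_n$, and that of \eqref{eq6.3} is $\omega_m(1/n)\lesssim\omega_{m+r}(1/n)$. The implication \eqref{eq6.2}$\Rightarrow$\eqref{eq6.3} is then immediate, since $\omega_m(1/n)\lesssim E_n\lesssim\omega_{m+r}(1/n)$, the last step being Jackson for the order $m+r$.

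\textbf{The substantive direction} \eqref{eq6.3}$\Rightarrow$\eqref{eq6.2}. I pass to dyadic scales $n=2^J$, fix near-best approximants $g_J\in B_{p,k}^{2^J}$ with $\|f-g_J\|\lesssim E_{2^J}$, and telescope $g_J=\sum_{i\le J}(g_i-g_{i-1})$, each increment lying in $B_{p,k}^{2^i}$ with norm $\lesssim E_{2^{i-1}}$. Bernstein's inequality (Corollary~\ref{cor5.3}) on the increments, together with the realization $\omega_s(2^{-J})\asymp E_{2^J}+2^{-Js}\|(-\Delta_k)^{s/2}g_J\|$ furnished by Theorem~\ref{thm6.1}, gives the weak inverse bound $\omega_s(2^{-J})\lesssim 2^{-Js}\sum_{i\le J}2^{is}E_{2^i}$ for every $s>0$. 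It is crucial to note that this upper bound is not enough on its own: fed back through Jackson, or compared at a single scale with the Bernstein estimate $\|(-\Delta_k)^{(m+r)/2}g_J\|\lesssim 2^{Jr}\|(-\Delta_k)^{m/2}g_J\|$, it only reproduces tautologies. What breaks the circle is the matching two-sided (strong-converse) estimate $\omega_s(2^{-J})\asymp 2^{-Js}\sum_{i\le J}2^{is}E_{2^i}$. Writing $c_i:=2^{im}E_{2^i}$ --- a sequence with $c_i\le 2^m c_{i-1}$ because $E_n$ is nonincreasing --- this equivalence turns \eqref{eq6.2} into the condition $\sum_{i\le J}c_i\lesssim c_J$ (all $J$) and \eqref{eq6.3} into $\sum_{i\le J}c_i\lesssim\sum_{i\le J}2^{-(J-i)r}c_i$ (all $J$). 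A discrete Hardy-type lemma then finishes the argument: assuming the geometrically damped sum dominates the full sum \emph{at every scale}, I apply this hypothesis at a scale $J_0+l_1$ slightly finer than a putative offending $J_0$; the damping factor $2^{-l_1 r}$ annihilates the accumulated low-frequency mass $\le 2^{-l_1 r}\sum_{i\le J_0}c_i$, while the bound $c_{J_0+j}\le 2^{mj}c_{J_0}$ keeps the newly added mass of size $\lesssim 2^{ml_1}c_{J_0}$, so that for a fixed $l_1$ depending only on the constants one gets $\sum_{i\le J_0}c_i\lesssim c_{J_0}$, which is \eqref{eq6.2}.

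\textbf{Main obstacle.} The hard point is precisely the strong-converse lower bound $\omega_s(2^{-J})\gtrsim 2^{-Js}\sum_{i\le J}2^{is}E_{2^i}$, the reverse of the easy weak inverse inequality. The convexity property \eqref{K-property2} of the $K$-functional, via Theorem~\ref{thm4.2}, yields only the weaker pointwise bound $\omega_s(2^{-J})\gtrsim 2^{-(J-i)s}E_{2^i}$, i.e. the $\max$ rather than the $\sum$; and the $\max$ is genuinely insufficient, as it fails to register the logarithmic inflation of $\omega_m$ in the critical case $E_n\asymp n^{-m}$, which is exactly the case that separates \eqref{eq6.2} from \eqref{eq6.3}. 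Conceptually, a local dip of $E_n$ at one scale $J_0$ breaks \eqref{eq6.2} there but is only \emph{detected} by the failure of \eqref{eq6.3} at the finer scales $J_0+l$, where the surviving coarse-scale content inflates $\omega_m$ relative to $\omega_{m+r}$; capturing this propagation is what forces both the two-sided modulus estimate and the essential use of the monotonicity of $\{E_n\}$ and of hypothesis \eqref{eq6.3} at all scales rather than at $n$ alone.
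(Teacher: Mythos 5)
Your reductions and the easy implication \eqref{eq6.2}$\Rightarrow$\eqref{eq6.3} are correct and coincide with the paper's argument (Jackson at order $m+r$ plus Lemma~\ref{lem3.5}). The substantive direction, however, rests entirely on a tool you never prove and which is in fact false: the two-sided ``strong converse'' estimate $\omega_s(2^{-J},f)_{p,d\mu_k}\asymp 2^{-Js}\sum_{i\le J}2^{is}E_{2^i}(f)_{p,d\mu_k}$. You yourself flag the lower bound as ``the hard point'', but your whole discrete reduction (turning \eqref{eq6.2} into $\sum_{i\le J}c_i\lesssim c_J$ and \eqref{eq6.3} into $\sum_{i\le J}c_i\lesssim\sum_{i\le J}2^{-(J-i)r}c_i$) depends on it, and no proof is supplied. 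It cannot be: take $p=2$, $s>0$, and $f=\sum_{j\ge1}a_jh_j$, where $\mathcal{F}_k(h_j)$ is supported in the annulus $\{2^{j-1}<|y|\le 2^j\}$, $\|h_j\|_{2,d\mu_k}=1$, and $a_j=2^{-js}/j$. By Plancherel for the Dunkl transform, $\|(-\Delta_k)^{s/2}f\|_{2,d\mu_k}^2\le\sum_j a_j^22^{2js}=\sum_j j^{-2}<\infty$, so $f\in W^{s}_{2,k}$ and $2^{Js}\omega_s(2^{-J},f)_{2,d\mu_k}\asymp 2^{Js}K_s(2^{-J},f)_{2,d\mu_k}\le\|(-\Delta_k)^{s/2}f\|_{2,d\mu_k}$ stays bounded in $J$; on the other hand, by Proposition~\ref{prop3.1} and Plancherel, $E_{2^i}(f)_{2,d\mu_k}\ge a_{i+1}$, so $\sum_{i\le J}2^{is}E_{2^i}(f)_{2,d\mu_k}\gtrsim\sum_{i\le J}(i+1)^{-1}\asymp\log J\to\infty$. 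Thus the weak inverse inequality of Theorem~\ref{thm7.1} is genuinely one-sided (this is the classical fact that the Sobolev space does not embed into the approximation space with $\ell^1$-summability), and your equivalence fails even for a single fixed function. Restricting the claim to functions satisfying \eqref{eq6.3} does not rescue it: for such functions the strong converse at order $m$ \emph{is} the conclusion \eqref{eq6.2} you are trying to prove, so the argument becomes circular.

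The gap is avoidable, and the paper's proof shows how: instead of trying to reverse the inverse theorem, it uses the hypothesis \eqref{eq6.3} itself to upgrade the homogeneity of the \emph{higher-order} modulus, $\omega_{m+r}(nt,f)_{p,d\mu_k}\lesssim n^{m}\omega_{m+r}(t,f)_{p,d\mu_k}$ (exponent $m$, not $m+r$). With this, Jackson's inequality gives $n^{-(m+r)}\sum_{j\le n}(j+1)^{m+r-1}E_j(f)_{p,d\mu_k}\lesssim\omega_{m+r}(1/n,f)_{p,d\mu_k}$; applying Theorem~\ref{thm7.1} at the coarser scale $ln$, splitting the sum at $n$, and using the improved homogeneity once more yields
\[
\frac{1}{n^{m+r}}\sum_{j=n+1}^{ln}(j+1)^{m+r-1}E_j(f)_{p,d\mu_k}\gtrsim\bigl(Cl^{r}-1\bigr)\,\omega_{m+r}\Bigl(\frac1n,f\Bigr)_{p,d\mu_k};
\]
finally, monotonicity of $E_j$ bounds the left-hand side by $\lesssim l^{m+r}E_n(f)_{p,d\mu_k}$, and choosing $l$ large but fixed gives $E_n(f)_{p,d\mu_k}\gtrsim\omega_{m+r}(1/n,f)_{p,d\mu_k}\asymp\omega_m(1/n,f)_{p,d\mu_k}$. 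Every inequality used here is one-sided. Your discrete Hardy-type lemma is sound as pure sequence arithmetic (your $l_1$ plays exactly the role of the paper's $l$), but to make it bear on the theorem it must be run at the level of the moduli themselves, as above, not via a false translation into conditions on $\{E_{2^i}\}$.
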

\begin{proof}
At first we prove the non-trivial part that \eqref{eq6.3} implies \eqref{eq6.2}.
Since, by Remark~\ref{rem4.3}, part (2), we have $\omega_m(nt, f)_{p,d\mu_k} \lesssim n^{m}
\omega_m(t, f)_{p,d\mu_k}$, relation \eqref{eq6.3} implies that
\begin{equation}\label{eq6.4}
\omega_{m+r}(nt, f)_{p,d\mu_k} \lesssim n^{m} \omega_{m+r}(t, f)_{p,d\mu_k}.
\end{equation}
This and Jackson's inequality give
\begin{align*}
&\frac{1}{n^{m+r}} \sum_{j=0}^n
(j+1)^{m+r-1} E_j(f)_{p,d\mu_k}
\\ &\qquad \lesssim
\frac{1}{n^{m+r}} \sum_{j=0}^n
(j+1)^{m+r-1}
\omega_{m+r}\Bigl(\frac{1}{j+1}, f\Bigr)_{p,d\mu_k}
\\ &\qquad \lesssim
\frac{1}{n^{m+r}} \sum_{j=0}^n
(j+1)^{m+r-1}
\Bigl(\frac{n}{j+1}\Bigr)^m\lesssim
\omega_{m+r}\Bigl(\frac1n, f\Bigr)_{p,d\mu_k}.
\end{align*}
Moreover, Theorem \ref{thm7.1} below implies
\begin{align*}
\omega_{m+r}\Bigl(\frac1{l n}, f\Bigr)_{p,d\mu_k}
&\lesssim \frac{1}{(ln)^{m+r}} \sum_{j=0}^{ln}
(j+1)^{m+r-1} E_j(f)_{p,d\mu_k}
\\ &\lesssim \frac{1}{l^{m+r}}
\omega_{m+r}\Bigl(\frac{1}{n}, f\Bigr)_{p,d\mu_k}
\\ &\qquad +\frac{1}{(ln)^{m+r}} \sum_{j=n+1}^{ln}
(j+1)^{m+1r-1} E_j(f)_{p,d\mu_k},
\end{align*}
or, in other words,
\begin{align*}
&\frac{1}{n^{m+r}} \sum_{j=n+1}^{ln}
(j+1)^{m+r-1} E_j(f)_{p,d\mu_k}
\\ &\qquad \gtrsim
Cl^{m+r}\omega_{m+r}\Bigl(\frac{1}{ln}, f\Bigr)_{p,d\mu_k}-
\omega_{m+r}\Bigl(\frac{1}{n}, f\Bigr)_{p,d\mu_k}.
\end{align*}
Using again \eqref{eq6.4}, we obtain
\[
\frac{1}{n^{m+r}} \sum_{j=n+1}^{ln}
(j+1)^{m+r-1} E_j(f)_{p,d\mu_k}\gtrsim
(Cl^{r}-1)\omega_{m+r}\Bigl(\frac1n, f\Bigr)_{p,d\mu_k}.
\]
Taking into account monotonicity of $ E_j(f)_{p,d\mu_k}$ and choosing $l$
sufficiently large, we arrive at \eqref{eq6.2}.

In order to prove that \eqref{eq6.2} implies~\eqref{eq6.3} for any $r>0$ we apply the inequality
\eqref{omega-inequality2} and Jackson's inequality~\eqref{Jackson}:
\[
\omega_{m+r}\Bigl(\frac{1}{n}, f\Bigr)_{p,d\mu_k}\leq c(r/2)\omega_{m}\Bigl(\frac{1}{n}, f\Bigr)_{p,d\mu_k}
\lesssim E_n(f)_{p,d\mu_k}\lesssim\omega_{m+r}\Bigl(\frac{1}{n}, f\Bigr)_{p,d\mu_k}.
\]
Theorem~\ref{thm6.2} is proved.
\end{proof}

\begin{remark}
If for some $r>0$ the property~\eqref{eq6.3} is true, then it is true for any $r>0$.
\end{remark}

\section{Inverse theorems of approximation theory}

\begin{theorem}\label{thm7.1}
Let $m>0$, $n \in \mathbb{N}$, $1\leq p\leq\infty$, $f \in L^{p}(\mathbb{R}^d, d\mu_k)$. We have
\begin{equation}\label{eq7.1}
K_{m}\Bigl(\frac{1}{n}, f\Bigl)_{p,d\mu_k}
\lesssim \frac{1}{n^{m}} \sum_{j=0}^n
(j+1)^{m-1} E_j(f)_{p,d\mu_k}.
\end{equation}
\end{theorem}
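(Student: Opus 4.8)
The plan is to bound $K_m(1/n,f)_{p,d\mu_k}$ directly by inserting a well-chosen admissible element $g\in W_{p,k}^m$ built from near-best approximants, and to control its smooth seminorm by a telescoping argument driven by Bernstein's inequality. Fix $N\in\mathbb{N}$ with $2^{N-1}\le n<2^N$, and for each $\nu\in\mathbb{Z}_+$ let $Q_\nu\in B_{p,k}^{2^\nu}$ be a best (or near-best) approximant of $f$, so that $\|f-Q_\nu\|_{p,d\mu_k}\lesssim E_{2^\nu}(f)_{p,d\mu_k}$; such approximants exist because the best approximation is attained. I would take $g=Q_N$. By Lemma~\ref{lem5.1} we have $B_{p,k}^{2^N}\subset W_{p,k}^m$, so $g$ is admissible in the infimum defining $K_m$ and
\[
K_m\Bigl(\tfrac1n,f\Bigr)_{p,d\mu_k}\le \|f-Q_N\|_{p,d\mu_k}+n^{-m}\|(-\Delta_k)^{m/2}Q_N\|_{p,d\mu_k},
\]
where the first term satisfies $\|f-Q_N\|_{p,d\mu_k}\lesssim E_{2^N}(f)_{p,d\mu_k}\le E_n(f)_{p,d\mu_k}$.

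For the smooth term I would telescope $Q_N=Q_0+\sum_{\nu=1}^N(Q_\nu-Q_{\nu-1})$, observing that each difference $Q_\nu-Q_{\nu-1}$ belongs to $B_{p,k}^{2^\nu}$. Applying Bernstein's inequality (Corollary~\ref{cor5.3}) blockwise, together with the triangle inequality and monotonicity of the best approximations, gives
\[
\|(-\Delta_k)^{m/2}(Q_\nu-Q_{\nu-1})\|_{p,d\mu_k}\lesssim 2^{\nu m}\|Q_\nu-Q_{\nu-1}\|_{p,d\mu_k}\lesssim 2^{\nu m}E_{2^{\nu-1}}(f)_{p,d\mu_k},
\]
while $\|(-\Delta_k)^{m/2}Q_0\|_{p,d\mu_k}\lesssim\|Q_0\|_{p,d\mu_k}\lesssim\|f\|_{p,d\mu_k}=E_0(f)_{p,d\mu_k}$. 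Summing and using $n\asymp 2^N$ yields
\[
n^{-m}\|(-\Delta_k)^{m/2}Q_N\|_{p,d\mu_k}\lesssim n^{-m}\Bigl(E_0(f)_{p,d\mu_k}+\sum_{\nu=1}^N 2^{\nu m}E_{2^{\nu-1}}(f)_{p,d\mu_k}\Bigr).
\]

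The main (though routine) obstacle is passing from this dyadic sum to the full sum $\sum_{j=0}^n (j+1)^{m-1}E_j(f)_{p,d\mu_k}$. After the index shift $\sum_{\nu=1}^N 2^{\nu m}E_{2^{\nu-1}}(f)_{p,d\mu_k}=2^m\sum_{\mu=0}^{N-1}2^{\mu m}E_{2^\mu}(f)_{p,d\mu_k}$, I would compare each term $2^{\mu m}E_{2^\mu}(f)_{p,d\mu_k}$ (for $\mu\ge1$) with the block $\sum_{2^{\mu-1}\le j<2^\mu}(j+1)^{m-1}E_j(f)_{p,d\mu_k}$. The crucial point is this one-level shift: on that block $E_j(f)_{p,d\mu_k}\ge E_{2^\mu}(f)_{p,d\mu_k}$ by monotonicity and $(j+1)^{m-1}\asymp 2^{\mu(m-1)}$ (valid for every $m>0$, whether $m-1$ is positive or negative), which gives $2^{\mu m}E_{2^\mu}(f)_{p,d\mu_k}\lesssim\sum_{2^{\mu-1}\le j<2^\mu}(j+1)^{m-1}E_j(f)_{p,d\mu_k}$. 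Summing over $\mu$ collects disjoint blocks covering $1\le j\le 2^{N-1}-1\le n$, so that $\sum_{\mu=0}^{N-1}2^{\mu m}E_{2^\mu}(f)_{p,d\mu_k}\lesssim\sum_{j=0}^n(j+1)^{m-1}E_j(f)_{p,d\mu_k}$, with the $\mu=0$ contribution $E_{1}(f)_{p,d\mu_k}\le E_0(f)_{p,d\mu_k}$ absorbed into the $j=0$ summand. Finally, since $\sum_{j=0}^n(j+1)^{m-1}\asymp n^m$ we have
\[
E_n(f)_{p,d\mu_k}\asymp n^{-m}E_n(f)_{p,d\mu_k}\sum_{j=0}^n(j+1)^{m-1}\lesssim n^{-m}\sum_{j=0}^n(j+1)^{m-1}E_j(f)_{p,d\mu_k},
\]
so the approximation term $\|f-Q_N\|_{p,d\mu_k}$ is dominated by the same quantity. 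Combining the two estimates yields \eqref{eq7.1}.
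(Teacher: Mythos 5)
Your proof is correct and is essentially the paper's own argument: dyadic (near-)best approximants, a telescoping decomposition whose blocks are controlled by Bernstein's inequality (Corollary~\ref{cor5.3}), the same dyadic-to-full block summation (the paper's inequality \eqref{eq7.2}, which you correctly verify for all $m>0$), and the choice of dyadic level comparable to $n$. The only structural difference is that you insert $Q_N$ directly as a competitor in the definition of $K_m$ (using Lemma~\ref{lem5.1} for admissibility, exactly as the paper does in Theorem~\ref{thm6.1}), whereas the paper estimates $\omega_m(1/n,f)_{p,d\mu_k}$ via the splitting $f=(f-f_{2^{s+1}})+f_{2^{s+1}}$ together with Lemma~\ref{lem4.4}, and then passes to $K_m$ through the equivalence of Theorem~\ref{thm4.2}; this makes your write-up marginally more self-contained for the $K$-functional form, but the mathematical content is identical.
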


\begin{remark}
By Theorem \ref{thm4.2}, $K_{m}\bigl(\frac{1}{n}, f\bigl)_{p,d\mu_k}$ in this
inequality can be equivalently replaced by $\omega_m\bigl(\frac{1}{n},
f\bigr)_{p,d\mu_k}$.
\end{remark}

\begin{proof}
Let us prove \eqref{eq7.1} for $\omega_m \bigl(\frac{1}{n}, f\bigr)_{p,d\mu_k}$.
By Proposition \ref{prop3.1}, for any $\sigma>0$ there exists $f_\sigma\in B_{p,
k}^\sigma$ such that
\[
\|f-f_\sigma\|_{p,d\mu_k}=E_\sigma(f)_{p,d\mu_k},\quad
E_0(f)_{p,d\mu_k}=\|f\|_{p,d\mu_k}.
\]
For any $s\in\mathbb{Z}_+$,
\begin{align*}
\omega_m(1/n, f)_{p,d\mu_k}&\leq \omega_m(1/n, f-f_{2^{s+1}})_{p,d\mu_k}+ \omega_m(1/n,
f_{2^{s+1}})_{p,d\mu_k}\\
&\lesssim E_{2^{s+1}}(f)_{p,d\mu_k}+\omega_m(1/n, f_{2^{s+1}})_{p,d\mu_k}.
\end{align*}
Using Lemma \ref{lem4.4}, we get
\begin{align*}
&\omega_m(1/n, f_{2^{s+1}})_{p,d\mu_k}\lesssim {n^{-m}}\|(-\Delta_k)^{m/2}
f_{2^{s+1}}\|_{p,d\mu_k}\\
&\qquad \lesssim \frac{1}{n^{m}} \Bigl(\|(-\Delta_k)^{m/2}f_1\|_{p,d\mu_k}+\sum_{j=0}^s \|(-\Delta_k)^{m/2}f_{2^{j+1}}-
(-\Delta_k)^{m/2}f_{2^j}\|_{p,d\mu_k}\Bigr).
\end{align*}
Then Bernstein inequality \eqref{eq5.2} implies that
\begin{align*}
\|(-\Delta_k)^{m/2}f_{2^{j+1}}-(-\Delta_k)^{m/2}f_{2^j}\|_{p,d\mu_k} &\lesssim
2^{m(j+1)}\|f_{2^{j+1}}-f_{2^j}\|_{p,d\mu_k}\\& \lesssim
2^{m(j+1)}E_{2^j}(f)_{p,d\mu_k},
\\
\|(-\Delta_k)^{m/2}f_1\|_{p,d\mu_k}&\lesssim E_0(f)_{p,d\mu_k}.
\end{align*}
Thus,
\[
\omega_m(1/n, f_{2^{s+1}})_{p,d\mu_k}\lesssim
\frac{1}{n^{m}}\Bigl(E_0(f)_{p,d\mu_k}+ \sum_{j=0}^s2^{m(j+1)}E_{2^j}(f)_{p,d\mu_k}\Bigr).
\]
Taking into account that
\begin{equation}\label{eq7.2}
\sum_{l=2^{j-1}+1}^{2^j} l^{m-1}E_l(f)_{p,d\mu_k} \geq
2^{m(j-1)}E_{2^j}(f)_{p,d\mu_k},
\end{equation}
we have
\begin{align*}
&\omega_m(1/n, f_{2^{s+1}})_{p,d\mu_k}\lesssim
\frac{1}{n^{m}}\Bigl(E_0(f)_{p,d\mu_k}+2^{m} E_1(f)_{p,d\mu_k}\\
&\qquad +\sum_{j=1}^s
2^{2m} \sum_{l=2^{j-1}+1}^{2^j} l^{m-1} E_l(f)_{p,d\mu_k}\Bigr)\lesssim \frac{1}{n^{m}}\sum_{j=0}^{2^s}(j+1)^{m-1}E_j(f)_{p,d\mu_k}.
\end{align*}
Choosing $s$ such that $2^s\leq n<2^{s+1}$ implies \eqref{eq7.1}. Theorem~\ref{thm7.1} is proved.
\end{proof}

Theorem \ref{thm7.1} and Jackson's inequality imply the following Marchaud inequality.
\begin{corollary}\label{cor9.2}
Let $\delta, m>0$, $1\leq p\leq\infty$, $f \in L^{p}(\mathbb{R}^d, d\mu_k)$. We have
\[
K_{m}(\delta, f)_{p,d\mu_k}
\lesssim
\delta^{m} \Bigl(
\|f\|_{p,d\mu_k}+
\int_\delta^1
{t^{-m} }
 { K_{m+1}(t, f)_{p,d\mu_k}}\,\frac{dt}{t}
 \Bigr).
\]
\end{corollary}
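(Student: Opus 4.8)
The plan is to obtain the inequality directly from the inverse estimate \eqref{eq7.1} of Theorem~\ref{thm7.1}, by replacing each best approximation $E_j(f)_{p,d\mu_k}$ with a value of the higher-order $K$-functional and then recognising the resulting sum as a Riemann sum for the integral on the right-hand side. First I would fix $n\in\mathbb{N}$ and take $\delta=1/n$. Applying the Jackson inequality \eqref{Jackson} with $r=0$ and smoothness order $m+1$, together with the equivalence \eqref{K-equiv}, gives $E_j(f)_{p,d\mu_k}\lesssim\omega_{m+1}(1/j,f)_{p,d\mu_k}\asymp K_{m+1}(1/j,f)_{p,d\mu_k}$ for every $j\ge1$, while $E_0(f)_{p,d\mu_k}=\|f\|_{p,d\mu_k}$. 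Substituting these bounds into \eqref{eq7.1} yields
\[
K_m\Bigl(\frac1n,f\Bigr)_{p,d\mu_k}\lesssim\frac1{n^m}\Bigl(\|f\|_{p,d\mu_k}+\sum_{j=1}^n (j+1)^{m-1}K_{m+1}\Bigl(\frac1j,f\Bigr)_{p,d\mu_k}\Bigr).
\]

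The key step is the passage from the sum to the integral, and this is the step I expect to be the main obstacle. Property \eqref{K-property2} shows that the function $s\mapsto s^{-(m+1)}K_{m+1}(s,f)_{p,d\mu_k}$ is non-increasing, so for $t\in[1/(j+1),1/j]$ one has $t^{-(m+1)}K_{m+1}(t,f)_{p,d\mu_k}\ge j^{m+1}K_{m+1}(1/j,f)_{p,d\mu_k}$. Integrating over this interval, whose length is $1/(j(j+1))$, produces a lower bound of order $(j+1)^{m-1}K_{m+1}(1/j,f)_{p,d\mu_k}$; hence each summand is controlled by $\int_{1/(j+1)}^{1/j}t^{-m}K_{m+1}(t,f)_{p,d\mu_k}\,\frac{dt}{t}$. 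Summing over $j=1,\dots,n$ joins the intervals up to $[1/(n+1),1]$, giving
\[
K_m\Bigl(\frac1n,f\Bigr)_{p,d\mu_k}\lesssim\frac1{n^m}\Bigl(\|f\|_{p,d\mu_k}+\int_{1/(n+1)}^{1}t^{-m}K_{m+1}(t,f)_{p,d\mu_k}\,\frac{dt}{t}\Bigr).
\]

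It then remains to replace the lower limit $1/(n+1)$ by $\delta=1/n$ and to pass from $\delta=1/n$ to arbitrary $\delta\in(0,1]$. On the short interval $[1/(n+1),1/n]$ I would use $K_{m+1}(t,f)_{p,d\mu_k}\le K_{m+1}(1/n,f)_{p,d\mu_k}\lesssim\|f\|_{p,d\mu_k}$ (from \eqref{ineqmodulus} and \eqref{K-equiv}); since $\int_{1/(n+1)}^{1/n}t^{-m-1}\,dt\asymp n^{m-1}$, this extra piece contributes at most $n^{-m}\cdot n^{m-1}\|f\|_{p,d\mu_k}\lesssim\|f\|_{p,d\mu_k}$, which is absorbed into the first term. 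Finally, for general $\delta$ I would choose $n=\lfloor1/\delta\rfloor\ge1$, so that $\delta\le1/n\le2\delta$; monotonicity of $K_m(\cdot,f)_{p,d\mu_k}$ in its first argument and the inclusion $[1/n,1]\subset[\delta,1]$ then upgrade the estimate at $1/n$ to the claimed estimate at $\delta$. The whole argument rests on the almost-monotonicity furnished by \eqref{K-property2}; apart from the care needed there and with the endpoint discrepancy and the $j=0$ term, no genuinely new analytic input beyond Theorems~\ref{thm7.1}, \ref{thm4.1} and \ref{thm4.2} is required.
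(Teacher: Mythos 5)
Your route is the one the paper intends (it offers no written proof, saying only that Theorem~\ref{thm7.1} and Jackson's inequality imply the corollary): substitute $E_j(f)_{p,d\mu_k}\lesssim\omega_{m+1}(1/j,f)_{p,d\mu_k}\asymp K_{m+1}(1/j,f)_{p,d\mu_k}$ (Theorem~\ref{thm4.1} with $r=0$ and Theorem~\ref{thm4.2}) into \eqref{eq7.1}, convert the sum to an integral via the monotonicity coming from \eqref{K-property2}, and rescale to general $\delta$. Steps one through three and the final rescaling are correct. However, your endpoint step genuinely fails when $m>1$. Pairing the $j$-th term with $[1/(j+1),1/j]$ produces $\int_{1/(n+1)}^1$, and the leftover piece over $[1/(n+1),1/n]$, after the prefactor $n^{-m}$ is included, is of size $n^{-m}\cdot n^{m-1}\|f\|_{p,d\mu_k}=\delta\,\|f\|_{p,d\mu_k}$ (with $\delta=1/n$). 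The first term of the target right-hand side is $\delta^{m}\|f\|_{p,d\mu_k}$, not $\|f\|_{p,d\mu_k}$ --- the factor $\delta^m$ multiplies the whole bracket --- and $\delta\,\|f\|_{p,d\mu_k}\lesssim\delta^{m}\|f\|_{p,d\mu_k}$ is false for $m>1$ as $\delta\to0$. Nor can the integral term absorb it: for $f\in B_{p,k}^{\sigma}$ with $\sigma$ small, Bernstein's inequality \eqref{eq5.2} gives $K_{m+1}(t,f)_{p,d\mu_k}\lesssim (t\sigma)^{m+1}\|f\|_{p,d\mu_k}$, hence $\int_\delta^1 t^{-m}K_{m+1}(t,f)_{p,d\mu_k}\,\frac{dt}{t}\lesssim\sigma^{m+1}\|f\|_{p,d\mu_k}$, so the corollary's right-hand side is $\asymp\delta^m\|f\|_{p,d\mu_k}$ while your chain only delivers the weaker bound $\delta\,\|f\|_{p,d\mu_k}$. (For $0<m\le1$ your argument is fine.)

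The gap is repairable with the tools you already invoke, in either of two ways. (i) Pair the $j$-th term, $j\ge2$, with the interval to the \emph{right}, $[1/j,1/(j-1)]$: since $K_{m+1}(\cdot,f)_{p,d\mu_k}$ is non-decreasing and $\int_{1/j}^{1/(j-1)}t^{-m-1}\,dt\asymp j^{m-1}$, one has $(j+1)^{m-1}K_{m+1}(1/j,f)_{p,d\mu_k}\lesssim\int_{1/j}^{1/(j-1)}t^{-m}K_{m+1}(t,f)_{p,d\mu_k}\,\frac{dt}{t}$, while the $j=1$ term is $\lesssim\|f\|_{p,d\mu_k}$; summing gives exactly $\int_{1/n}^1$ with no overshoot, and your rescaling to general $\delta$ finishes the proof. (ii) Keep your intervals, but bound the leftover by $n^{-1}K_{m+1}(1/n,f)_{p,d\mu_k}\lesssim n^{-1}K_{m}(1/n,f)_{p,d\mu_k}$, using \eqref{omega-inequality2} together with \eqref{K-equiv}, and absorb it into the left-hand side for all $n$ larger than a fixed constant; the finitely many remaining $n$ are handled by $K_m(1/n,f)_{p,d\mu_k}\le\|f\|_{p,d\mu_k}\lesssim n^{-m}\|f\|_{p,d\mu_k}$.
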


\begin{theorem}\label{thm9.3}
Let $1\leq p\leq\infty$, $f \in L^{p}(\R^d, d\mu_k)$ and $r>0$ be
such that $\sum _{j=1}^{\infty} j^{r-1}E_j(f)_{p,d\mu_k} <\infty.$ Then $f \in
W^{r}_{p, k}$ and, for any $m>0$, $n\in\mathbb{N}$, we have
\begin{equation}\label{eq7.3}
K_{m}\Bigl(\frac{1}{n}, (-\Delta_k)^{r/2}f\Bigl)_{p,d\mu_k}
\lesssim\frac{1}{n^{r}} \sum_{j=0}^n
(j+1)^{m+r-1}E_j(f)_{p,d\mu_k} +\sum_{j=n+1}^\infty j^{r-1}
E_j(f)_{p,d\mu_k}.
\end{equation}
\end{theorem}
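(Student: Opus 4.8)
The plan is to run the dyadic scheme from the proof of Theorem~\ref{thm7.1}, now carrying the fractional derivative $(-\Delta_k)^{r/2}$ along. By Proposition~\ref{prop3.1} fix near-best approximants $g_j\in B_{p,k}^{2^j}$ with $\|f-g_j\|_{p,d\mu_k}\asymp E_{2^j}(f)_{p,d\mu_k}$, so that $f=g_0+\sum_{j\ge0}(g_{j+1}-g_j)$ in $L^{p}(\R^d,d\mu_k)$. Since $g_{j+1}-g_j\in B_{p,k}^{2^{j+1}}$, Bernstein's inequality (Corollary~\ref{cor5.3}) gives $\|(-\Delta_k)^{r/2}(g_{j+1}-g_j)\|_{p,d\mu_k}\lesssim 2^{jr}E_{2^j}(f)_{p,d\mu_k}$, while the dyadic comparison behind \eqref{eq7.2} shows $\sum_{j\ge0}2^{jr}E_{2^j}(f)_{p,d\mu_k}\asymp\sum_{l\ge1}l^{r-1}E_l(f)_{p,d\mu_k}$, which is finite by hypothesis. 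Hence $\sum_{j\ge0}(-\Delta_k)^{r/2}(g_{j+1}-g_j)$ converges absolutely in $L^p$; as $(-\Delta_k)^{r/2}$ is continuous on $\Phi_k'$ and $g_N\to f$ in $\Phi_k'$, its sum is $(-\Delta_k)^{r/2}f$, so $(-\Delta_k)^{r/2}f\in L^{p}(\R^d,d\mu_k)$ and $f\in W_{p,k}^{r}$.

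For the estimate I would replace $K_m$ by the modulus via Theorem~\ref{thm4.2} and bound $\omega_m(1/n,(-\Delta_k)^{r/2}f)_{p,d\mu_k}$. Fix $s$ with $2^s\le n<2^{s+1}$ and split $(-\Delta_k)^{r/2}f=(-\Delta_k)^{r/2}g_{s+1}+(-\Delta_k)^{r/2}(f-g_{s+1})$. For the high-frequency remainder, subadditivity of $\omega_m$ and the trivial bound $\omega_m(\delta,h)_{p,d\mu_k}\lesssim\|h\|_{p,d\mu_k}$ from \eqref{ineqmodulus} give
\[
\omega_m\Bigl(\tfrac1n,(-\Delta_k)^{r/2}(f-g_{s+1})\Bigr)_{p,d\mu_k}\lesssim\sum_{j>s}2^{jr}E_{2^j}(f)_{p,d\mu_k}\asymp\sum_{l>2^s}l^{r-1}E_l(f)_{p,d\mu_k}.
\]
Writing $\sum_{l>2^s}=\sum_{l>n}+\sum_{2^s<l\le n}$, the first piece is the tail term in \eqref{eq7.3}, while in the second $l\asymp n$, so $l^{r-1}\asymp n^{-m}l^{m+r-1}$ and it is $\lesssim n^{-m}\sum_{l\le n}l^{m+r-1}E_l(f)_{p,d\mu_k}$, hence absorbed into the polynomial term. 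For the low-frequency part, $g_{s+1}\in B_{p,k}^{2^{s+1}}\subset W_{p,k}^{m+r}$ by Corollary~\ref{cor5.3} and Lemma~\ref{lem5.1}, so Lemma~\ref{lem4.4} (applied to $(-\Delta_k)^{r/2}g_{s+1}$ with difference and derivative orders both equal to $m$) gives $\omega_m(1/n,(-\Delta_k)^{r/2}g_{s+1})_{p,d\mu_k}\lesssim n^{-m}\|(-\Delta_k)^{(m+r)/2}g_{s+1}\|_{p,d\mu_k}$. Telescoping $g_{s+1}=g_0+\sum_{j=0}^{s}(g_{j+1}-g_j)$ and using Bernstein with exponent $m+r$ bounds the last norm by $E_0(f)_{p,d\mu_k}+\sum_{j=0}^{s}2^{(j+1)(m+r)}E_{2^j}(f)_{p,d\mu_k}$; the comparison \eqref{eq7.2} with $m$ replaced by $m+r$ then turns the geometric sum into $\sum_{j\le n}(j+1)^{m+r-1}E_j(f)_{p,d\mu_k}$, so the low-frequency contribution is $\lesssim n^{-m}\sum_{j\le n}(j+1)^{m+r-1}E_j(f)_{p,d\mu_k}$. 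Combining the two contributions yields \eqref{eq7.3}, the polynomial term appearing with the factor $n^{-m}$ (the exponent being the order $m$ of the $K$-functional).

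The main obstacle is the first paragraph: converting the formal identity $(-\Delta_k)^{r/2}f=\sum_j(-\Delta_k)^{r/2}(g_{j+1}-g_j)$ into a rigorous statement that $(-\Delta_k)^{r/2}f$ is represented by an $L^p$ function. This is precisely where the summability hypothesis $\sum_j j^{r-1}E_j(f)_{p,d\mu_k}<\infty$ enters, and one must use that each class in $\Phi_k'=\mathcal{S}'(\R^d)/\Pi$ contains a unique $L^p$ representative, together with the $\Phi_k'$-continuity of $(-\Delta_k)^{r/2}$. The remaining ingredients — commuting $(-\Delta_k)^{r/2}$ with $\varDelta_t^m$ on entire functions via Lemma~\ref{lem5.1}, the membership $g_{s+1}\in W_{p,k}^{m+r}$, and the bookkeeping in the two dyadic comparisons — are routine and mirror the proof of Theorem~\ref{thm7.1}.
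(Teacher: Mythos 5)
Your proposal is correct and follows essentially the same route as the paper's proof: dyadic (near-)best approximants $g_j\in B_{p,k}^{2^j}$, Bernstein's inequality (Corollary~\ref{cor5.3}) to get absolute $L^p$-convergence of the telescoped series of fractional Laplacians, identification of its sum with $(-\Delta_k)^{r/2}f$ by weak continuity in $\Phi_k'$ (the paper does exactly this, testing against $\psi\in\Psi_k$), and then the low/high-frequency splitting at $2^s\asymp n$. The remaining differences are cosmetic: the paper uses exact best approximants, applies Corollary~\ref{cor5.4} term by term to the telescoped low-frequency sum instead of your ``Lemma~\ref{lem4.4}, then telescope the derivative norm,'' and leaves implicit the absorption of the range $2^s<l\le n$ that you spell out.

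One discrepancy deserves attention: your final bound carries the factor $n^{-m}$ in front of $\sum_{j\le n}(j+1)^{m+r-1}E_j(f)_{p,d\mu_k}$, whereas the theorem as stated has $n^{-r}$. This is not a defect of your argument; it points to a typo in the paper. The paper's own proof produces $n^{-m}$ as well: Corollary~\ref{cor5.4} applied to $(-\Delta_k)^{r/2}(f_{2^{j+1}}-f_{2^j})\in B_{p,k}^{2^{j+1}}$ gives $(2^{j+1}/n)^m\cdot 2^{(j+1)r}E_{2^j}(f)_{p,d\mu_k}=n^{-m}2^{(j+1)(m+r)}E_{2^j}(f)_{p,d\mu_k}$, so the display in the paper pairing $n^{-r}$ with $2^{(m+r)(j+1)}$ is internally inconsistent. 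Moreover, the stated $n^{-r}$ form cannot hold when $r>m$: take $f\in B_{p,k}^{1}\cap\mathcal{S}(\R^d)$ with $(-\Delta_k)^{(m+r)/2}f\neq 0$; then $E_j(f)_{p,d\mu_k}=0$ for $j\ge 1$, so the right-hand side of \eqref{eq7.3} is $\asymp n^{-r}\|f\|_{p,d\mu_k}$, while by Corollary~\ref{cor5.5} and Theorem~\ref{thm4.2} the left-hand side is $\gtrsim n^{-m}\|(-\Delta_k)^{(m+r)/2}f\|_{p,d\mu_k}$, which contradicts the inequality for large $n$. For $r\le m$ your bound is stronger than, and implies, the stated one. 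So your proof establishes the corrected (and classically expected) form of the theorem, which is exactly what the paper's argument actually proves.
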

\begin{remark}\label{rem9.2}
We can replace $K_{m}\bigl(\frac{1}{n}, (-\Delta_k)^{r/2}f\bigl)_{p,d\mu_k}$ by
the modulus $\omega_m\bigl(\frac{1}{n}, (-\Delta_k)^{r/2}f\bigr)_{p,d\mu_k}$.
\end{remark}

\begin{proof}Let us prove \eqref{eq7.3} for $\omega_m
\bigl(\frac{1}{n}, (-\Delta_k)^{r/2}f\bigr)_{p,d\mu_k}$. Consider
\begin{equation}\label{eq7.4}
(-\Delta_k)^{r/2}f_1+\sum_{j=0}^{\infty}\Bigl((-\Delta_k)^{r/2}f_{2^{j+1}}-(-\Delta_k)^{r/2}f_{2^{j}}\Bigr).
\end{equation}
By Bernstein's inequality \eqref{eq5.2},
\[
\|(-\Delta_k)^{r/2}f_{2^{j+1}}-(-\Delta_k)^{r/2}f_{2^j}\|_{p,d\mu_k}\lesssim
2^{(j+1)r}E_{2^j}(f)_{p,d\mu_k}\lesssim\sum_{l=2^{j-1}+1}^{2^j}l^{r-1}E_l(f)_{p,d\mu_k}.
\]
Therefore, series \eqref{eq7.4} converges to a function
$g\in L^{p}(\mathbb{R}^d, d\mu_k)$. Let us show that $g=(-\Delta_k)^{r/2}f$, i.e., $f\in W^{r}_{p, k}$. Set
\[
S_N=(-\Delta_k)^{r/2}f_1+\sum_{j=0}^N \Bigl((-\Delta_k)^{r/2}f_{2^{j+1}}-(-\Delta_k)^{r/2}f_{2^j}\Bigr).
\]
Then
\begin{align*}
\<\mathcal{F}_k(g), \psi\> &= \<g, \mathcal{F}_k(\psi)\> =\lim_{N\to\infty} \<S_N,
\mathcal{F}_k(\psi)\>=\\
&= \lim_{N\to\infty}\<\mathcal{F}_k(S_N), \psi\>=\lim_{N\to\infty}\<|y|^{r}\mathcal{F}_k(f_{2^{N+1}}), \psi\>
= \<|y|^{r} \mathcal{F}_k(f), \psi\>,
\end{align*}
where $\psi\in \Psi_k$.
Hence, $\mathcal{F}_k(g)(y) =|y|^{r}\mathcal{F}_k(f)(y)$ and $g=(-\Delta_k)^{r/2}f$
from $g\in L^{p}(\mathbb{R}^d, d\mu_k)$.

To obtain \eqref{eq7.3}, we write
\[
\omega_m(1/n, (-\Delta_k)^{r/2}f)_{p,d\mu_k}\leq\omega_m(1/n, (-\Delta_k)^{r/2}f-S_N)_{p,d\mu_k}+\omega_m(1/n, S_N)_{p,d\mu_k}.
\]
The first term is estimated as follows
\begin{align*}
\omega_m(1/n, (-\Delta_k)^{r/2}f-S_N)_{p,d\mu_k} &\lesssim\|(-\Delta_k)^{r/2}
f-S_N\|_{p,d\mu_k}\\
& \lesssim \sum_{j=N+1}^{\infty}2^{r(j+1)}E_{2^j}(f)_{p,d\mu_k}
\lesssim\sum_{l=2^N+1}^{\infty} l^{r-1}E_l(f)_{p,d\mu_k}.
\end{align*}
Moreover,
by Corollary \ref{cor5.4},
\begin{align*}
\omega_m(1/n, S_N)_{p,d\mu_k} &\leq \omega_m(1/n, (-\Delta_k)^{r/2}
f_1)_{p,d\mu_k}\\
&\qquad +\sum_{j=0}^{N}\omega_m(1/n, (-\Delta_k)^{r/2}
f_{2^{j+1}}-(-\Delta_k)^{r/2} f_{2^j})_{p,d\mu_k}\\
& \lesssim \frac{1}{n^{r}}\Bigl(E_0(f)_{p,d\mu_k}+ \sum_{j=0}^{N} 2^{(m+r)(j+1)}E_{2^j}(f)_{p,d\mu_k}\Bigr).
\end{align*}
Using \eqref{eq7.2} and choosing $N$ such that $2^N\leq n<2^{N+1}$ complete the
proof of~\eqref{eq7.3}.
\end{proof}

\end{document}